\newtheorem{theorem}{Theorem}[section]
\newtheorem{corollary}[theorem]{Corollary}
\newtheorem{lemma}[theorem]{Lemma}
\newtheorem*{Theorem1}{Theorem \ref{Lcombbd}}
\newtheorem*{Theorem2}{Theorem \ref{concat}}
\newtheorem*{Theorem3}{Theorem \ref{Gmeascor}}
\newtheorem*{Theorem4}{Theorem \ref{mainthm0}}
\newtheorem*{Theorem5}{Theorem \ref{mainthm2}}
\newtheorem*{Theorem5.5}{Theorem ~\ref{gibbsthm0}}
\newtheorem*{Theorem6}{Theorem \ref{gibbsthm}}
\newtheorem*{Theorem6.5}{Theorem \ref{hardbetacor}}
\newtheorem*{Theorem7}{Theorem \ref{bddthm}}
\newtheorem*{Theorem8}{Theorem \ref{bddpmthm}}
\theoremstyle{definition}
\newtheorem{definition}[theorem]{Definition}
\newtheorem{remark}[theorem]{Remark}
\begin{document}

\title[On subshifts with slow forbidden word growth]{On subshifts with slow forbidden word growth}

\begin{abstract}

In this work, we treat subshifts, defined in terms of an alphabet $\mathcal{A}$ and (usually infinite) forbidden list $\mathcal{F}$, where the number of $n$-letter words in $\mathcal{F}$ has ``slow growth rate'' in $n$. We show that such subshifts are well-behaved in several ways; for instance, they are boundedly supermultiplicative in the sense of \cite{baker} and they have unique measures of maximal entropy with the K-property and which satisfy Gibbs bounds on large (measure-theoretically) sets.

The main tool in our proofs is a more general result which states that bounded supermultiplicativity and a sort of measure-theoretic specification property together imply uniqueness of MME and our Gibbs bounds. 

We also show that some well-known classes of subshifts can be treated by our results, including the symbolic codings of $x \mapsto \alpha + \beta x$ (the so-called $\alpha$-$\beta$ shifts from \cite{hofbauerAB}) and the bounded density subshifts of \cite{stanley}. 

\end{abstract}

\date{}
\author{Ronnie Pavlov}
\address{Ronnie Pavlov\\
Department of Mathematics\\
University of Denver\\
2390 S. York St.\\
Denver, CO 80208}
\email{rpavlov@du.edu}
\urladdr{www.math.du.edu/$\sim$rpavlov/}
\thanks{The author gratefully acknowledges the support of a Simons Foundation Collaboration Grant.}
\keywords{Symbolic dynamics, intrinsic ergodicity, Gibbs bounds}
\renewcommand{\subjclassname}{MSC 2010}
\subjclass[2010]{Primary: 37B10; Secondary: 37B40, 37D35}

\maketitle


\section{Introduction}\label{intro}

In this work, we study (one-dimensional) subshifts, which are symbolically defined topological dynamical systems. A subshift can be defined in terms of a finite set $\mathcal{A}$ (called the alphabet) and a set $\mathcal{F}$ of (finite) forbidden words on the alphabet; the induced subshift is the set of all biinfinite sequences on $\mathcal{A}$ which do not contain any word from $\mathcal{F}$. An extremely well-studied case occurs when $\mathcal{F}$ is finite. The induced subshift is then called a shift of finite type (or SFT), and the behavior of SFTs is in many senses very well understood. 

There are clearly only countably many SFTs, and so it is important to try to understand the general case where $\mathcal{F}$ is infinite as well. A plausible heuristic is that the longer a word $w$ is, the less of an effect forbidding $w$ has on the subshift. Therefore, it seems reasonable to expect that if $\mathcal{F}$ is `small' in the sense of having numbers of $n$-letter words which grow slowly in terms of $n$, then $X$ should be `close to' the class of SFTs and have correspondingly good behavior. However, there are very few results in the literature of this sort. Some notable related works include \cite{buzzi} (which used a hypothesis of slow growth of so-called one-sided constraint words to prove various properties, including finitely many ergodic MMEs), \cite{gurevic} (which used a hypothesis in terms of rapid convergence of entropies of SFT approximations of $X$ to prove uniqueness of MME), and \cite{miller} (which used a quantitative condition on the sets $\mathcal{F}_n$ of $n$-letter words in $\mathcal{F}$ to prove nonemptiness). In this work, we show that if $|\mathcal{F}_n|$ has slow enough growth compared to the alphabet $\mathcal{A}$, then $X$ has many properties similar to those of irreducible SFTs.

The main such property that we derive is uniqueness of the measure of maximal entropy (or MME), which we prove via a more general result which may be of independent interest. This result extends recent work (see, among other work, \cite{CT0}, \cite{CT1}, \cite{gapspec}) using weakened specification properties as hypotheses. Much like the work of Climenhaga and Thompson in \cite{CT0} and \cite{CT1}, we require the ability to combine only a restricted set of words, but rather than the prefix-center-suffix decomposition used in their work, we instead require that this restricted set be measure-theoretically large for MMEs.

\begin{Theorem4}
If $X$ is a subshift, there exists $C$ so that $|\mathcal{L}_n(X)| < Ce^{nh(X)}$ for all $n$, there exist $G \subset \mathcal{L}(X)$ and $R \in \mathbb{N}$ where for all $v, w \in G$, there exists $y \in \mathcal{A}^R$ so that $vyw \in \mathcal{L}(X)$, and for every ergodic MME $\mu$, there exist $\epsilon > 0$ and a syndetic $S$ so that $\mu(G_n) > \epsilon$ for every $n \in S$, then $X$ has a unique measure of maximal entropy.
\end{Theorem4}

We then prove that `small' $\mathcal{F}$ implies the three hypotheses of Theorem~\ref{mainthm0} (in fact for $R = 0$, meaning that words in $G$ are actually concatenable). We begin with the upper bound of the form $Ce^{nh(X)}$ on $|\mathcal{L}_n(X)|$, which is called bounded supermultiplicativity in \cite{baker}. 

\begin{Theorem1}
If there exists $\beta < 2h(X) - \log |\mathcal{A}|$ for which 
$\sum_{n = 1}^{\infty} n|\mathcal{F}_n| e^{-n \beta} < \frac{1}{36}$, then
$|\mathcal{L}_n(X)| < 4 e^{nh(X)}$ for all sufficiently large $n$.
\end{Theorem1}

The other two hypotheses of Theorem~\ref{mainthm0} require a set $G$; we define it to be the set of `good' words which neither begin nor end with more than one-third of a word in $\mathcal{F}$. (See Section~\ref{defg} for details.)

\begin{Theorem2}
If there exists $c > |\mathcal{A}|^{-1}$ so that $\sum_{n = 1}^{\infty} |\mathcal{F}_n| c^n < \frac{|\mathcal{A}| c - 1}{2}$ and 
$\sum_{n = 1}^{\infty} n |\mathcal{F}_n| c^{n/3} < 1/4$, then for any $v,w \in G$, $vw \in \mathcal{L}(X)$.
\end{Theorem2}

\begin{Theorem3}
If there exists $\alpha < h(X)$ so that $\sum_{n = 1}^{\infty} n^2 |\mathcal{F}_n| e^{-(n/3)\alpha} < 1 - e^{-\alpha}$, then for every ergodic MME $\mu$ on $X$, there exist $\epsilon > 0$ and a syndetic set $S$ so that $\mu(G_n) > \epsilon$ for all $n \in S$.
\end{Theorem3}

We can then use Theorems~\ref{mainthm0}, \ref{Lcombbd}, \ref{concat}, and \ref{Gmeascor} to prove uniqueness of MME, and even the Kolmogorov or K-property for that measure, via a simple quantitative condition on $\mathcal{F}$.

\begin{Theorem5}
If $\sum_{n = 1}^{\infty} n^2 |\mathcal{F}_n| (3/|\mathcal{A}|)^{n/3} < \frac{1}{36}$, then $X$ has a unique MME, which has the K-property.
\end{Theorem5}

We can also prove a restricted Gibbs property for $\mu$ under slightly stronger hypotheses. Again, we first prove a more general result, requiring a slightly stronger weakened specification property than Theorem~\ref{mainthm0}.

\begin{Theorem5.5}
If $X$ is a subshift with unique MME $\mu$, there exists $C$ so that $|\mathcal{L}_n(X)| < Ce^{nh(X)}$ for all $n$, there exist $G' \subset \mathcal{L}(X)$ and $R \in \mathbb{N}$ so that for all $u,v,w \in G'$, there exist
$y,z \in \mathcal{A}^R$ for which $uyvzw \in \mathcal{L}(X)$, and there also exist $\epsilon > 0$ and a syndetic $S$ so that $\mu(G'_n) > \epsilon$ for every $n \in S$, then $G'$ has the following Gibbs property: there exists $D$ so that for all $w \in G'$, 
\[
\mu([w]) \geq De^{-|w|h(X)}.
\]
\end{Theorem5.5}

Since this requires a stronger specification property than that of Theorem~\ref{mainthm0}, verifying this for general subshifts with `small' $\mathcal{F}$ requires a slightly different definition; we define $G'$ to be the set of words which neither begin nor end with more than one-fourth of a word in $\mathcal{F}$. 
(See Section~\ref{defg} for details.) Via simple modifications to the proofs of Theorems~\ref{concat} and \ref{Gmeascor} adapted to $G'$ (and again using $R = 0$), we can prove the following.

\begin{Theorem6}
If $\sum_{n = 1}^{\infty} n^2 |\mathcal{F}_n| (3/|\mathcal{A}|)^{n/4} < \frac{1}{36}$ and $\mu$ is the unique MME on the induced subshift $X$ (guaranteed by Theorem~\ref{mainthm2}), then there exist $\epsilon > 0$ and a syndetic set $S$ so that $\mu(G'_n) > \epsilon$ for all $n \in S$, and there exist constants 
$D, D'$ so that for all $w \in G'$,
\[
De^{-|w|h(X)} \leq \mu([w]) \leq D'e^{-|w|h(X)}.
\]
\end{Theorem6}

\begin{remark}
We would like to briefly compare and contrast Theorems~\ref{mainthm2} and \ref{gibbsthm} with the main results from \cite{buzzi}, which are some of the most similar that we are aware of. In \cite{buzzi}, the author considers so-called minimal left (or right) constraints, which are words $w$ with the property that there is a word $v$ which cannot legally follow (or precede) $w$, but which can when the first (last) letter of $w$ is removed. Under the hypothesis that the exponential growth rate
\[
\limsup \frac{1}{n} \log |\mathcal{C}_n(X)|
\]
(here $\mathcal{C}_n(X)$ could represent the $n$-letter minimal left or right constraints) is less than $h(X)$, many useful properties of $X$ are derived, including finitely many ergodic MMEs, each of which is Bernoulli up to some periodicity. 

To summarize: our conclusions are similar to those of \cite{buzzi}, but some are weaker (e.g. Kolmogorov vs. Bernoulli) and some are stronger (e.g. uniqueness of MME vs. finitely many ergodic MME and our Gibbs bounds). Our hypotheses are similar in that they involve bounding the size of certain sets from above, but our assumptions are `finer' in the sense that they require explicit upper bounds on infinite series. We also require different bounds, in that our sets must grow with rate less than $|\mathcal{A}|^{n/3}$ rather than rate less than $e^{nh(X)}$. Finally, there is always a forbidden list $\mathcal{F}$ (the so-called first offenders/minimal forbidden words) where $|\mathcal{F}_{n+1}| \leq n|\mathcal{C}_{n}(X)|$ for every $n$ (see Lemma 15 from \cite{buzzi}), and so the sets we consider essentially grow at most at fast as those considered in \cite{buzzi}. They may in fact be much smaller, since a subshift $X$ may have other forbidden lists with much slower growth (see examples on p. 389 of \cite{buzzi}.) 

\end{remark}

Finally, we apply our techniques to some well-known classes of subshifts. The first is the $\alpha$-$\beta$ shifts of 
\cite{hofbauerAB} (denoted by $X_{\alpha, \beta}$). These shifts have corresponding sets $\mathcal{F}_{\alpha, \beta}$ of forbidden words and $G'_{\alpha,\beta}$ of words which do not begin or end with more than one-fourth of a forbidden word (see Section~\ref{apps} for details).

\begin{Theorem6.5}
For every $\ell$, there exists $\epsilon > 0$ so that if $\alpha < \epsilon$, $\beta > \ell - \epsilon$, and $\alpha + \beta < \ell$, then $X_{\alpha, \beta}$ satisfies the conclusions of Theorems~\ref{mainthm2} and \ref{gibbsthm} (with $G' = G'_{\alpha, \beta}$).
\end{Theorem6.5} 

Our other application is to the bounded density shifts (denoted by $X_{k,h}$) from \cite{stanley}, as well as a natural generalization which we call signed bounded density shifts (denoted by $X^{\pm}_{k,h}$). These shifts have corresponding sets 
$\mathcal{F}_{k,h}$ (and $\mathcal{F}^{\pm}_{k,h}$) of forbidden words and sets $G_{k,h}$ (and $G^{\pm}_{k,h}$) of words which do not begin or end with more than one-third of a forbidden word (see Section~\ref{apps} for details). 

\begin{Theorem7}
If $k > 9e$, $h(n) = nk$ for $n < 11$ and $h(n) > nk(1 - \frac{1}{9e})$ for all $n$, then $X_{k,h}$ satisfies the conclusions of Theorems~\ref{mainthm2} and \ref{gibbsthm} (with $G' = G_{k,h}$).
\end{Theorem7}

\begin{Theorem8}
If $k > 9e$, $h(n) = nk$ for $n < 11$ and $h(n) > nk(1 - \frac{1}{9e})$ for all $n$, then $X^{\pm}_{k,h}$ satisfies the conclusions of Theorems~\ref{mainthm2} and \ref{gibbsthm} (with $G' = G^{\pm}_{k,h}$).
\end{Theorem8}

Section~\ref{defs} contains definitions and preliminary results which we will need. Section~\ref{unique} contains the proofs of Theorems~\ref{mainthm0} and \ref{gibbsthm0}, Section~\ref{ub} contains the proofs of Theorem~\ref{Lcombbd} and related results, Section~\ref{defg} contains the proofs of Theorem~\ref{concat} and related results, Section~\ref{bounding} contains the proof of Theorem~\ref{Gmeascor} and related results, and Section~\ref{final} contains the proofs of Theorem~\ref{mainthm2} and 
\ref{gibbsthm}. Finally, Section~\ref{apps} contains proofs of Theorems~\ref{hardbetacor}, \ref{bddthm}, and \ref{bddpmthm}.

\section{Definitions and preliminaries}\label{defs}

We begin with some basic definitions from symbolic dynamics; for a more in-depth introduction, see \cite{LM}.

\begin{definition}
For any finite alphabet $\mathcal{A}$, the \textbf{full shift} over $\mathcal{A}$ is the set $\mathcal{A}^{\mathbb{Z}} = \{\ldots x_{-1} x_0 x_1 \ldots \ : \ x_i \in \mathcal{A}\}$, which is viewed as a compact topological space with the (discrete) product topology.
\end{definition}

\begin{definition}
A \textbf{word} over $\mathcal{A}$ is any member $w = w_1 \ldots w_n$ of $\mathcal{A}^n$ for some $n \in \mathbb{N}$; $n$ is called the 
\textbf{length} of $w$ and denoted by $|w|$. The set of all words over $\mathcal{A}$ is denoted by $\mathcal{A}^*$. 
\end{definition}

For any set of words $S \subseteq \mathcal{A}^*$, we make the notation $S_n := S \cap \mathcal{A}^n$.

\begin{definition}
A word $w$ is a \textbf{subword} of a longer word or biinfinite sequence $x$ if there exists $n$ so that $x_n \ldots x_{n + |w| - 1} = w_1 \ldots w_{|w|}$.
We say that $w$ is a \textbf{prefix} of a longer word $v$ if $v_1 \ldots v_{|w|} = w_1 \ldots w_{|w|}$, and that $w$ is a \textbf{suffix} of 
$v$ if $v_{|v| - |w| + 1} \ldots v_{|v|} = w_1 \ldots w_{|w|}$. 
\end{definition}

We will colloquially refer to words as being the same if they are shifts of each other, i.e. in the above definition, we would usually say
$x_n \ldots x_{n + |w| - 1} = w$, even though these words technically have different domains when viewed as functions.

\begin{definition}
The \textbf{shift action} $\sigma$ is the automorphism of a full shift defined by $(\sigma x)_n = x_{n+1}$ for $n \in \mathbb{Z}$. 
\end{definition}

\begin{definition}
A \textbf{subshift} is a closed subset of a full shift $\mathcal{A}^{\mathbb{Z}}$ which is invariant under $\sigma$.
\end{definition}

Any subshift $X$ is a compact space with the induced topology from $\mathcal{A}^{\mathbb{Z}}$, and so $(X,\sigma)$ is a topological dynamical system. Subshifts can be equivalently defined in terms of a set of `forbidden' words.

\begin{definition}
For any alphabet $\mathcal{A}$ and $\mathcal{F} \subset \mathcal{A}^*$, define the \textbf{subshift induced by $\mathcal{A}$ and $\mathcal{F}$} as
\[
X = X(\mathcal{A}, \mathcal{F}) := \{x \in \mathcal{A}^{\mathbb{Z}} \ : \ x \textrm{ contains no subword in } \mathcal{F}\}.
\]
\end{definition}

It is well known that any $X(\mathcal{A}, \mathcal{F})$ is a subshift, and that all subshifts are representable in this way.

\begin{definition}
The \textbf{language} of a subshift $X$, denoted by $\mathcal{L}(X)$, is the set of all words which appear in points of $X$. For any $n \in \mathbb{Z}$, $\mathcal{L}_n(X) := 
\mathcal{L}(X) \cap \mathcal{A}^n$, the set of words in the language of $X$ with length $n$. 
\end{definition}

\begin{definition}
For any subshift and word $w \in \mathcal{L}_n(X)$, the \textbf{cylinder set} $[w]$ is the set of all $x \in X$ with $x_1 x_2 \ldots x_n = w$.  
\end{definition}

\begin{definition}\label{topent}
The \textbf{topological entropy} of a subshift $X$ is
\[
h(X) := \lim_{n \rightarrow \infty} \frac{1}{n} \ln |\mathcal{L}_n(X)|.
\]
\end{definition}

The existence of this limit follows from a standard subadditivity argument, which also implies that the limit can be replaced by an infimum, i.e. for any $n$, 
$h(X) \leq \frac{1}{n} \ln |\mathcal{L}_n(X)|$, or equivalently
\begin{equation}\label{basic}
|\mathcal{L}_n(X)| \geq e^{nh(X)}.
\end{equation}

\begin{definition}
For any subshift $X \subset \mathcal{A}^{\mathbb{Z}}$ and any $k \in \mathbb{N}$, the \textbf{$k$th higher power shift} associated to $X$, denoted $X^k$, is a subshift with alphabet $\mathcal{L}_k(X)$ defined by the following rule: $y \in (\mathcal{L}_k(X))^{\mathbb{Z}}$ is an element of $X^k$ if and only if the sequence 
$x \in A^{\mathbb{Z}}$ defined by concatenating the `letters' of $y$ is in $X$. 
(Formally, $\forall n \in \mathbb{Z}$, the $n$th letter of $x$ is defined to be the $(n \pmod k)$th letter of $y_{\lfloor n/k \rfloor}$.)
\end{definition}

It is well-known that the dynamical systems $(X^k, \sigma)$ and $(X, \sigma^k)$ are topologically conjugate, and that $h(X^k) = kh(X)$.

\

We also need some definitions from measure-theoretic dynamics; all measures considered in this paper will be Borel probability measures on a full shift $\mathcal{A}^{\mathbb{Z}}$.

\begin{definition}
A measure $\mu$ on $\mathcal{A}^{\mathbb{Z}}$ is {\bf ergodic} if any measurable set $C$ which is shift-invariant, meaning $\mu(C \triangle \sigma C) = 0$, has measure $0$ or $1$. 
\end{definition}

Not all $\sigma$-invariant measures are ergodic, but a well-known result called the ergodic decomposition shows that any non-ergodic measure can be written as a ``weighted average'' (formally, an integral) of ergodic measures; see 
Chapter 6 of \cite{walters} for more information. 

One of the main strengths of ergodic measures is Birkhoff's pointwise ergodic theorem.

\begin{theorem}{\rm (Birkhoff's pointwise ergodic theorem)}\label{birkhoff}
For any ergodic measure $\mu$ on a subshift $X$ and any $f \in L^1(\mathcal{A}^{\mathbb{Z}},\mu)$,
\[
\lim_{n \rightarrow \infty} \frac{1}{n} \sum_{i=0}^{n-1} f(\sigma^i x) \underset{\mu{\rm -a.e.}}{\rightarrow} \int f \ d\mu.
\]
\end{theorem}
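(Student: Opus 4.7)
The plan is the classical two-step argument for Birkhoff's theorem. First, I will observe that the limsup and liminf of the ergodic averages $A_n f(x) := \frac{1}{n}\sum_{i=0}^{n-1} f(\sigma^i x)$, call them $\overline{f}(x)$ and $\underline{f}(x)$, are shift-invariant and hence $\mu$-a.e.\ equal to constants $c^+ \ge c^-$ by ergodicity. The invariance follows from the identity $A_n f(\sigma x) - A_n f(x) = \tfrac{1}{n}(f(\sigma^n x) - f(x))$: the right-hand side tends to $0$ pointwise $\mu$-a.e., because a Borel-Cantelli argument using shift-invariance of $\mu$ and the bound $\sum_n \mu(|f| > n\epsilon) \le \epsilon^{-1}\|f\|_1 < \infty$ yields $n^{-1} f(\sigma^n x) \to 0$ $\mu$-a.e.

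The substantive step is to identify $c^+ = c^- = \int f\, d\mu$. Since $c^- \le c^+$ is automatic, it is enough to prove $c^+ \le \int f\, d\mu$; the reverse inequality then follows by applying the same argument to $-f$. The workhorse is the maximal ergodic inequality: for any $g \in L^1(\mu)$, the set $E(g) := \{x : \sup_{n\ge 1} \sum_{i=0}^{n-1} g(\sigma^i x) > 0\}$ satisfies $\int_{E(g)} g\, d\mu \ge 0$. I would prove this via the standard ``rising sun''/Hopf combinatorial argument: decompose each orbit into maximal blocks on which the partial sums are positive, then use shift-invariance of $\mu$ to telescope the block contributions into a single nonnegative integral. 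Granting this, suppose for contradiction $c^+ > \int f\, d\mu$ and pick any $c$ strictly between them. With $g := f - c$, the set $E(g) = \{x : \sup_{n\ge 1} A_n f(x) > c\}$ contains $\{\overline{f} > c\}$, which has full measure by Step 1, so the maximal inequality reduces to $\int (f - c)\, d\mu \ge 0$, contradicting $c > \int f\, d\mu$.

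The main obstacle is the maximal ergodic inequality; once it is in place, the rest is purely formal. Equivalent presentations of the combinatorial lemma (Garsia, Katznelson-Weiss) would serve equally well and the choice is cosmetic. Of course, Birkhoff's theorem is a classical result far older than the symbolic-dynamics machinery of this paper, so in practice one would simply cite Chapter 1 of \cite{walters} rather than reprove it; the plan above is only what a self-contained treatment would require.
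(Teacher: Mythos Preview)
Your sketch is a correct outline of the classical proof of Birkhoff's theorem: invariance of $\overline{f}$ and $\underline{f}$ (hence constancy by ergodicity), followed by the Hopf maximal ergodic inequality applied to $g = f - c$ to pin down those constants as $\int f\,d\mu$. The Borel--Cantelli argument you give for $n^{-1}f(\sigma^n x)\to 0$ is fine, though the more common route to invariance of $\overline{f},\underline{f}$ is simply the telescoping identity $(n+1)A_{n+1}f(x) = f(x) + nA_n f(\sigma x)$, which avoids any measure-theoretic input at that stage.

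That said, there is nothing to compare against: the paper does not prove this theorem. It is stated in the preliminaries section as a classical black box, exactly as you anticipate in your final paragraph, and is used later only through straightforward applications (e.g.\ to indicator functions in the proof of Theorem~\ref{step1bd}). So the appropriate ``proof'' here is indeed just a citation to \cite{walters}; your self-contained plan is sound but goes well beyond what the paper itself supplies.
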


There is also a measure-theoretic version of entropy. 

\begin{definition}\label{measent}
For any $\sigma$-invariant measure $\mu$ on a full shift $\mathcal{A}^{\mathbb{Z}}$, the \textbf{measure-theoretic entropy} of $\mu$ is
\[
h(\mu) := \lim_{n \rightarrow \infty} \frac{-1}{n}  \sum_{w \in \mathcal{A}^n} \mu([w]) \ln \mu([w]),
\]
where terms with $\mu([w]) = 0$ are omitted from the sum.
\end{definition}

Ergodicity and measure-theoretic entropy are connected by the classical Shannon-McMillan-Brieman theorem.

\begin{theorem}{\rm (Shannon-McMillan-Brieman theorem)}
For an ergodic measure $\mu$ on a full shift $\mathcal{A}^{\mathbb{Z}}$,
\[
\lim_{n \rightarrow \infty} \frac{- \log \mu([x_1 \ldots x_n])}{n}  \rightarrow h(\mu)
\]
both $\mu$-a.e. and in $L^1(\mu)$. 
\end{theorem}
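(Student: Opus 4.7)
The plan is to combine L\'evy's martingale convergence theorem with Birkhoff's pointwise ergodic theorem (Theorem~\ref{birkhoff}). On the biinfinite space $\mathcal{A}^{\mathbb{Z}}$, for each $k \ge 1$ define the conditional probability
\[
\nu_k(x) := \mu\bigl([x_1] \bigm| x_0, x_{-1}, \ldots, x_{-k+1}\bigr),
\]
with the convention $\nu_0(x) := \mu([x_1])$, and set $g_k := -\log \nu_k$. The chain rule for conditional probabilities, combined with the $\sigma$-invariance of $\mu$, yields the non-stationary Birkhoff-type decomposition
\[
-\log\mu\bigl([x_1 \ldots x_n]\bigr) = \sum_{k=0}^{n-1} g_k(\sigma^{k} x),
\]
so the quantity of interest is a sum along the orbit of $x$ against a family of functions $g_k$ that depends on $k$.

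Next, L\'evy's martingale convergence theorem, applied to the increasing filtration generated by $x_0, x_{-1}, x_{-2},\ldots$, gives
\[
g_k \longrightarrow g_\infty := -\log\mu\bigl([x_1] \bigm| x_0, x_{-1},x_{-2},\ldots\bigr)
\]
both $\mu$-a.e.\ and in $L^1(\mu)$. Doob's maximal inequality combined with standard estimates on $-\log$ yields the crucial domination $g^* := \sup_k g_k \in L^1(\mu)$. Integrating the displayed decomposition and invoking Cesaro identifies $\int g_\infty\,d\mu = h(\mu)$ (compare Definition~\ref{measent}), and applying Theorem~\ref{birkhoff} to $g_\infty$ gives
\[
\frac{1}{n}\sum_{k=0}^{n-1} g_\infty(\sigma^k x) \longrightarrow h(\mu)
\]
both $\mu$-a.e.\ and in $L^1$.

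It remains to control the error $E_n(x) := \frac{1}{n}\sum_{k=0}^{n-1}(g_k - g_\infty)(\sigma^k x)$. For the $L^1$ statement, stationarity of $\mu$ gives $\|g_k \circ \sigma^k - g_\infty \circ \sigma^k\|_1 = \|g_k - g_\infty\|_1 \to 0$, so $\|E_n\|_1 \le \frac{1}{n}\sum_k \|g_k - g_\infty\|_1 \to 0$ by Cesaro. The real obstacle is the almost-everywhere statement: because the summand depends on $k$, Theorem~\ref{birkhoff} does not apply directly. The standard remedy is Maker's ergodic theorem, whose hypothesis is precisely the domination $\sup_k g_k \in L^1$ established above; concretely, one applies Birkhoff to $G_N(x) := \sup_{k\ge N}|g_k - g_\infty|(x)$, which lies in $L^1$ and decreases to $0$ pointwise, and then sends $N\to\infty$. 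This dominated-sum step, pivoting entirely on the integrability of $g^*$, is where essentially all of the technical content of the theorem resides.
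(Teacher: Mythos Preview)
The paper does not actually prove this statement: the Shannon--McMillan--Breiman theorem is stated without proof in Section~\ref{defs} as a classical preliminary result, alongside Birkhoff's ergodic theorem and the Variational Principle. So there is no ``paper's own proof'' to compare against.

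That said, your sketch is the standard proof and is correct. The telescoping identity
\[
-\log\mu([x_1\ldots x_n]) = \sum_{k=0}^{n-1} g_k(\sigma^k x)
\]
is exactly right, L\'evy's theorem gives $g_k \to g_\infty$ a.e.\ and in $L^1$, and the identification $\int g_\infty\,d\mu = h(\mu)$ follows as you say. The only place worth a remark is the integrability of $g^* = \sup_k g_k$: ``Doob's maximal inequality combined with standard estimates on $-\log$'' is a fair summary, but this step is usually attributed to the Chung--Neveu lemma, and the estimate is that $\mu(g^* > t) \le |\mathcal{A}| e^{-t}$, whence $g^* \in L^1$. With that in hand, Maker's theorem (or the explicit $G_N$ argument you outline) finishes the a.e.\ statement. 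Nothing is missing.
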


Measure-theoretic and topological entropies are connected by the famous Variational Principle.

\begin{theorem}{\rm (Variational Principle)}\label{var}
For a subshift $X$, 
$h(X) = \sup_{\mu} h(\mu)$,
where the supremum is taken over all $\sigma$-invariant measures $\mu$ with $\mu(X) = 1$. 
\end{theorem}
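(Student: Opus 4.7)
I would prove the two inequalities separately. First, for the easy direction $h(\mu) \leq h(X)$ for every $\sigma$-invariant $\mu$ supported on $X$: by concavity of $-x\log x$, for each $n$,
\[
-\sum_{w \in \mathcal{A}^n} \mu([w]) \log \mu([w]) \leq \log \#\{w \in \mathcal{A}^n : \mu([w]) > 0\} \leq \log |\mathcal{L}_n(X)|,
\]
since any word with $\mu([w]) > 0$ must lie in $\mathcal{L}_n(X)$. Dividing by $n$ and passing to the limit gives $h(\mu) \leq h(X)$ by Definitions~\ref{topent} and \ref{measent}.

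For the reverse direction, I would construct a particular invariant measure $\mu^*$ with $h(\mu^*) \geq h(X)$. For each $n$, choose a point $x^{(w)} \in X$ with $x^{(w)}_1 \ldots x^{(w)}_n = w$ for each $w \in \mathcal{L}_n(X)$, and set
\[
\nu_n := \frac{1}{|\mathcal{L}_n(X)|} \sum_{w \in \mathcal{L}_n(X)} \delta_{x^{(w)}}, \qquad \mu_n := \frac{1}{n}\sum_{j=0}^{n-1} \sigma^j_* \nu_n.
\]
Weak-$*$ compactness yields a subsequential limit $\mu_{n_k} \to \mu^*$, and a standard Cesaro argument shows $\mu^*$ is $\sigma$-invariant and supported on $X$.

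The core computation, following the classical Misiurewicz argument, proceeds as follows. Since $\nu_n$ is uniform on $\mathcal{L}_n(X)$, we have $-\sum_{w \in \mathcal{A}^n} \nu_n([w]) \log \nu_n([w]) = \log |\mathcal{L}_n(X)|$ exactly. Fixing $q \in \mathbb{N}$, for each $0 \leq i < q$ one partitions $\{0,\ldots,n-1\}$ into consecutive blocks of length $q$ starting at $i$ (with $O(q)$ leftover indices); subadditivity of partition entropy under refinement, together with averaging over $i$ and applying concavity of the entropy functional in $\mu$ to pass from $\nu_n$ to $\mu_n$, yields
\[
\frac{1}{n} \log |\mathcal{L}_n(X)| \leq -\frac{1}{q}\sum_{v \in \mathcal{A}^q} \mu_n([v]) \log \mu_n([v]) + \frac{2q}{n} \log |\mathcal{A}|.
\]
Because each cylinder $[v]$ is clopen, the map $\mu \mapsto -\sum_v \mu([v])\log \mu([v])$ is weak-$*$ continuous, so letting $n = n_k \to \infty$ gives $h(X) \leq -\frac{1}{q}\sum_v \mu^*([v])\log \mu^*([v])$. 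Dividing by nothing further and taking $q \to \infty$ yields $h(X) \leq h(\mu^*)$, completing the reverse inequality.

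The main obstacle is the Misiurewicz-style bookkeeping: one has to carefully verify that the boundary correction $(2q/n)\log|\mathcal{A}|$ does not depend on $|\mathcal{L}_n(X)|$ and vanishes as $n \to \infty$ for each fixed $q$, so that the outer $q \to \infty$ limit can be taken cleanly. Everything else simplifies in the subshift setting: continuity of partition entropy — which normally requires approximating arbitrary partitions by those with small-measure boundary — is automatic because cylinder sets are clopen, and the word partition $\{[a] : a \in \mathcal{A}\}$ is a two-sided generator so that partition entropy along $\xi^q$ computes $h(\mu^*)$ in the limit.
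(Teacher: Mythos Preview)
The paper does not prove the Variational Principle; it is stated without proof in Section~\ref{defs} as a classical background result (with Walters' textbook \cite{walters} implicitly serving as the reference). Your proposal is the standard Misiurewicz argument, and it is correct as sketched---in fact the measure construction you use for the reverse inequality is exactly the one the paper later invokes explicitly (citing Walters) in the proof of Theorem~\ref{gibbsthm0}.
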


\begin{definition}
For any subshift $X$, a \textbf{measure of maximal entropy} on $X$ is a measure $\mu$ with support contained in $X$ for which $h(\mu) = h(X)$.
\end{definition}

It is well-known that every subshift has at least one measure of maximal entropy, and the ergodic decomposition and affineness of the entropy map (see Theorem 8.7(ii) in \cite{walters}) imply that if $X$ has multiple measures of maximal entropy, then it has multiple ergodic measures of maximal entropy.

\begin{definition}
A measure $\mu$ on a subshift $X$ is said to be \textbf{Kolmogorov} or have the \textbf{K-property} if there exists a $\sigma$-algebra $\mathcal{K}$ contained in $\mathcal{B}(X)$, the $\sigma$-algebra generated by all shifts of cylinder sets of words in $\mathcal{L}(X)$, with the following properties:\\

\noindent
$\bullet$ $\mathcal{K} \subset \sigma \mathcal{K}$

\noindent
$\bullet$ $\bigvee_{n = 0}^{\infty} \sigma^n \mathcal{K} = \mathcal{B}$ up to equivalence of sets with $\mu$-null symmetric difference

\noindent
$\bullet$ $\bigcap_{n = 0}^{\infty} \sigma^n \mathcal{K} = \{\varnothing, X\}$ up to equivalence of sets with $\mu$-null symmetric difference

\end{definition}

Among other properties, Kolmogorov $\mu$ have very strong mixing properties, and every non-trivial measure-theoretic factor has positive entropy. (See \cite{walters} for more information.)

Finally, we need some basic definitions about sets of natural numbers.

\begin{definition}
A set $S \subseteq \mathbb{N}$ is \textbf{syndetic} if there exists $N$ so that $\{k, \ldots, k + N - 1\} \cap S \neq \varnothing$ for every $k$; we then say that $N$ is an upper bound on the gaps of $S$.
\end{definition}

\begin{definition}
The \textbf{lower density} of a set $S \subseteq \mathbb{N}$ is
\[
\liminf_{n \rightarrow \infty} \frac{|S \cap \{1, \ldots, n\}|}{n}.
\]
\end{definition}

We note the easy fact that if $S$ has lower density greater than $\frac{1}{2}$, then $S + S = \{s_1 + s_2 \ : \ s_1, s_2 \in S\}$ is cofinite. Indeed, for large $n$, $|S \cap \{1, \ldots, n\}| > n/2$, which implies that $S$ contains either $n/2$ or some pair $k, n-k$ and so that $n \in S + S$.

\

Finally, we need two preliminary results for our proofs. The first is an argument of Miller from \cite{miller}, where he showed that under certain quantitative assumptions on $\mathcal{A}$ and $\mathcal{F}$, the induced subshift $X(\mathcal{A}, \mathcal{F})$ is nonempty. His results applied to one-sided subshifts (i.e. indexed by $\mathbb{N}$ rather than $\mathbb{Z}$). We will eventually need two different adaptations of his original argument, which we present here for context.

\begin{theorem}\label{ogmiller}{\rm (\cite{miller})}
If $\mathcal{F} \subset \mathcal{A}^*$ and there exists $c > |\mathcal{A}|^{-1}$ so that
\[
\sum_{v \in \mathcal{F}} c^{|v|} \leq |\mathcal{A}|c - 1, 
\]
then the one-sided shift $X(\mathcal{A}, \mathcal{F}) \subset \mathcal{A}^{\mathbb{N}}$ is nonempty.
\end{theorem}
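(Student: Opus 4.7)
The plan is to count admissible words and show the count grows at rate at least $1/c$, after which a compactness argument yields a point in the shift. Write $a = |\mathcal{A}|$ and let $f_n$ denote the number of length-$n$ words over $\mathcal{A}$ containing no element of $\mathcal{F}$ as a subword, with the convention $f_0 = 1$.

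First I would derive a one-step recurrence. Any admissible word of length $n+1$ is obtained by appending one of $a$ letters to an admissible word of length $n$, and the only way such an extension fails to be admissible is for a forbidden subword to appear ending at position $n+1$ (the length-$n$ prefix was already admissible). For each $v \in \mathcal{F}$ of length $k \leq n+1$, the number of extensions whose final $k$ letters equal $v$ is at most $f_{n+1-k}$, since the initial $n+1-k$ letters must themselves be admissible. Summing yields
\[
f_{n+1} \;\geq\; a f_n \;-\; \sum_{k=1}^{n+1} |\mathcal{F}_k|\, f_{n+1-k}.
\]

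Next I would prove by induction on $n \geq 1$ the stronger multiplicative estimate $f_n \geq f_{n-1}/c$. The base case $f_1 \geq a - |\mathcal{F}_1| \geq 1/c$ reduces to $|\mathcal{F}_1| c \leq ac - 1$, which follows from $|\mathcal{F}_1| c \leq \sum_{v \in \mathcal{F}} c^{|v|} \leq ac - 1$. For the inductive step, iterating the hypothesis gives $f_{n+1-k} \leq c^{k-1} f_n$, so
\[
c \sum_{k=1}^{n+1} |\mathcal{F}_k|\, f_{n+1-k} \;\leq\; f_n \sum_{k=1}^{n+1} |\mathcal{F}_k|\, c^{k} \;\leq\; (ac - 1)\, f_n,
\]
and multiplying the recurrence by $c$ gives $c f_{n+1} \geq a c f_n - (ac-1) f_n = f_n$. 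Hence $f_n \geq c^{-n} > 0$ for every $n$, so admissible length-$n$ words exist for every $n$; a standard K\"onig's lemma / compactness argument then produces a one-sided sequence in $\mathcal{A}^{\mathbb{N}}$ avoiding every element of $\mathcal{F}$, proving $X(\mathcal{A},\mathcal{F}) \neq \varnothing$.

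The main subtlety lies in choosing the right inductive invariant. A direct induction on $f_n \geq c^{-n}$ stalls, because the recurrence subtracts earlier values $f_{n+1-k}$ for which induction provides only lower bounds. Proving instead the multiplicative comparison $f_n \geq f_{n-1}/c$ sidesteps this by converting the inductive hypothesis into an upper bound $f_{n+1-k} \leq c^{k-1} f_n$, which is precisely calibrated to the quantity $\sum_{v} c^{|v|}$ appearing in the assumption $\sum_v c^{|v|} \leq ac - 1$.
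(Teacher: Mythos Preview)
Your proof is correct but follows a genuinely different route from the paper's. The paper uses Miller's original \emph{weight function} argument: for each word $w$ one defines
\[
f_c(w) = \sum_{v \in \mathcal{F}} \sum_{\substack{r \in \mathcal{A}^*: |r| < |v|,\\ wr \text{ ends with } v}} c^{|r|},
\]
and shows the identity $\sum_{a \in \mathcal{A}} f_c(wa) = c^{-1}\bigl(\sum_{v \in \mathcal{F}} c^{|v|} + f_c(w)\bigr)$. Under the hypothesis this gives $f_c(w) < 1 \Rightarrow \sum_a f_c(wa) < |\mathcal{A}|$, so some extension $wa$ again has weight below $1$; starting from the empty word one builds an explicit infinite admissible sequence one letter at a time.

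Your approach instead counts all admissible $n$-words $f_n$, derives the recurrence $f_{n+1} \geq af_n - \sum_k |\mathcal{F}_k| f_{n+1-k}$, and proves the multiplicative invariant $f_n \geq f_{n-1}/c$ by induction, then invokes K\"onig's lemma. Both arguments hinge on the same calibration between $c$ and $\sum_v c^{|v|}$, but yours is a global counting estimate while the paper's is a pointwise potential-theoretic one. Your method immediately yields the quantitative bonus $f_n \geq c^{-n}$ (hence an entropy lower bound $h \geq \log(1/c)$), which the paper obtains separately in Theorem~\ref{millerent} by a variant of the weight-function argument. On the other hand, the paper's weight function $f_c$ is the tool that gets reused and adapted throughout the rest of the paper---most notably in the two-sided version (Theorem~\ref{2miller}) where one must extend on both ends simultaneously and a pure word-counting argument would be awkward to set up---so in context the weight-function proof is the more natural choice.
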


\begin{proof}

Define, for every $w \in \mathcal{A}^*$, the weight function
\[
f_c(w) := \sum_{v \in \mathcal{F}} \sum_{\substack{r \in \mathcal{A}^*: |r| < |v|,\\ wr \textrm{ ends with } v}} c^{|r|}.
\]

It is clear that $f_c(\varnothing) = 0$, where $\varnothing$ is the empty word. Now, we note that for any $w$,
\begin{multline*}
\sum_{a \in \mathcal{A}} f_c(wa) = \sum_{v \in \mathcal{F}, a \in \mathcal{A}} \sum_{\substack{r \in \mathcal{A}^*: |r| < |v|,\\ war \textrm{ ends with } v}} c^{|r|}
= \sum_{ar \in \mathcal{F}} c^{|ar| - 1} + 
\sum_{v \in \mathcal{F}} \sum_{\substack{ar \in \mathcal{A}^*: |ar| < |v|,\\ w(ar) \textrm{ ends with } v}} c^{|ar| - 1}\\
= \frac{1}{c} \left(\sum_{v \in \mathcal{F}} c^{|v|} + f_c(w)\right).
\end{multline*}

If $f_c(w) < 1$, then $\sum_{a \in \mathcal{A}} f_c(wa) = \frac{1}{c} \left(\sum_{v \in \mathcal{F}} c^{|v|} + f_c(w)\right) < \frac{1}{c}(|\mathcal{A}|c - 1 + 1) = |\mathcal{A}|$.
This implies that there exists at least one $a \in \mathcal{A}$ so that $f_c(wa) < 1$. However, now by induction we can begin with $\varnothing$, which has weight $0 < 1$, and inductively add letters, creating a sequence $a_1 a_2 \ldots$ containing no words from $\mathcal{F}$. This sequence is in $X(\mathcal{A}, \mathcal{F})$ by definition.

\end{proof}

We will also need a combinatorial tool known as the Pliss Lemma (see \cite{pliss}).

\begin{lemma}\label{plisslem}
If $(a_n)$ is a sequence satisfying $0 \leq a_n \leq A$ for all $n$, $\alpha := \liminf (a_1 + \ldots + a_n)/n$, and $\beta < \alpha$, then the set
\[
\{n \ : \ \forall 0 \leq k < n, (a_{k+1} + \ldots + a_n)/(n-k) \geq \beta\}
\]
has lower density at least $\frac{\alpha - \beta}{A - \beta}$.

\end{lemma}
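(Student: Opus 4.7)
The plan is to reduce the problem to counting ``record'' times of a single auxiliary sequence. Set $S_n := a_1 + \ldots + a_n$ and $S_0 := 0$, and define $b_n := S_n - \beta n$. Then the condition $(a_{k+1} + \ldots + a_n)/(n-k) \geq \beta$ for all $0 \leq k < n$ rewrites as $S_n - S_k \geq \beta(n-k)$ for all $0 \leq k < n$, or equivalently $b_n \geq b_k$ for all $0 \leq k < n$. In other words, the set $S$ in the statement is exactly the set of indices $n \geq 1$ at which $b_n$ attains a (weak) running maximum of $b_0, b_1, \ldots, b_n$.

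Next I would track how fast the running maximum $M_n := \max_{0 \leq k \leq n} b_k$ can grow. Since $M_n = M_{n-1}$ whenever $n \notin S$, the telescoping sum
\[
M_N = M_N - M_0 = \sum_{n \in S \cap \{1, \ldots, N\}} (M_n - M_{n-1})
\]
has at most $|S \cap \{1, \ldots, N\}|$ nonzero terms. At a record time $n \in S$ we have $M_n = b_n$ and $M_{n-1} \geq b_{n-1}$, so
\[
M_n - M_{n-1} \leq b_n - b_{n-1} = a_n - \beta \leq A - \beta.
\]
Combining these gives the key inequality $M_N \leq (A - \beta) \cdot |S \cap \{1, \ldots, N\}|$.

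Finally, I would feed in the $\liminf$ hypothesis to bound $M_N$ from below. For any $\epsilon > 0$, the assumption $\liminf S_n/n = \alpha$ yields $S_N/N > \alpha - \epsilon$ for all sufficiently large $N$, and hence
\[
M_N \geq b_N = S_N - \beta N \geq (\alpha - \beta - \epsilon) N
\]
for all large $N$. Combining with the previous paragraph gives
\[
\frac{|S \cap \{1, \ldots, N\}|}{N} \geq \frac{\alpha - \beta - \epsilon}{A - \beta}
\]
for all large $N$, and letting $\epsilon \downarrow 0$ shows that the lower density of $S$ is at least $(\alpha - \beta)/(A - \beta)$, as desired.

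There is no serious obstacle here: the argument is essentially a book-keeping exercise once the record-time reformulation is in place. The only subtlety worth double-checking is the step bounding $M_n - M_{n-1}$ at a record time, where one must use $b_{n-1} \leq M_{n-1}$ (rather than equality) in order to get the clean bound $A - \beta$ independent of how far $b$ has dipped below $M$ since the previous record.
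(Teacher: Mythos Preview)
Your proof is correct. The paper does not actually prove this lemma; it merely states it as a known combinatorial tool and cites \cite{pliss}, so there is no in-paper argument to compare against. Your record-time reformulation (tracking running maxima of $b_n = S_n - \beta n$) is in fact one of the standard proofs of the Pliss lemma, and all the bookkeeping checks out: in particular $\beta < \alpha \leq A$ guarantees $A - \beta > 0$, and your bound $M_n - M_{n-1} \leq a_n - \beta$ at record times is exactly what is needed.
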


\section{General arguments for uniqueness of MME and Gibbs bounds}\label{unique}

In this section, we describe a general theorem which implies uniqueness of MME, which is an extension of the main result of \cite{gapspec} using a new measure-theoretic specification property. We first need the following elementary lemma from \cite{gapspec} (presented without proof). The result there was for general expansive systems and equilibrium states for nonzero potentials, but here we state a much simpler version for subshifts and measures of maximal entropy (corresponding to zero potential).

\begin{theorem}\label{specbd}
If $X$ is a subshift, $\mu$ is an ergodic MME on $X$, and $S \subset \mathcal{A}^n$, then
\[
|S| \geq \left( e^{nh(X)} \right)^{\frac{1}{\mu(S)}} |\mathcal{L}_n(X)|^{1 - \frac{1}{\mu(S)}} 2^{\frac{-1}{\mu(S)}}.
\]
\end{theorem}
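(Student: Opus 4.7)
The plan is to start from the equality $h(\mu) = h(X)$ (since $\mu$ is an MME) combined with the standard fact, coming from subadditivity, that $h(\mu) = \inf_n \frac{1}{n} H_\mu(\mathcal{P}^n)$ where $\mathcal{P}^n$ denotes the partition of $X$ into $n$-letter cylinder sets. This immediately yields
\[
n h(X) \;\leq\; H_\mu(\mathcal{P}^n) \;=\; -\sum_{w \in \mathcal{L}_n(X)} \mu([w]) \log \mu([w]).
\]
We may freely replace $S$ by $S \cap \mathcal{L}_n(X)$, since this only decreases $|S|$ (without changing $\mu(S)$, because cylinders of words outside $\mathcal{L}_n(X)$ are $\mu$-null).

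Next I would split the sum on the right according to whether $w \in S$ or $w \in \mathcal{L}_n(X) \setminus S$, and apply the elementary concavity estimate that for nonnegative reals $p_1, \ldots, p_k$ with sum $p$,
\[
-\sum_{i=1}^{k} p_i \log p_i \;\leq\; p \log k - p \log p,
\]
which follows from Jensen applied to $-x\log x$ (equivalently, uniform distributions maximize entropy). This gives
\[
-\sum_{w \in S} \mu([w]) \log \mu([w]) \;\leq\; \mu(S) \log |S| - \mu(S) \log \mu(S),
\]
and the analogous bound for the complement, with $|\mathcal{L}_n(X) \setminus S| \leq |\mathcal{L}_n(X)|$ and total mass $1 - \mu(S)$.

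Adding these two bounds, and using that the binary entropy $-\mu(S)\log\mu(S) - (1-\mu(S))\log(1-\mu(S))$ is at most $\log 2$, produces
\[
n h(X) \;\leq\; \mu(S) \log |S| + (1 - \mu(S)) \log |\mathcal{L}_n(X)| + \log 2.
\]
Rearranging to isolate $\log |S|$, dividing by $\mu(S)$, and exponentiating yields exactly the target inequality
\[
|S| \;\geq\; (e^{n h(X)})^{1/\mu(S)} |\mathcal{L}_n(X)|^{1 - 1/\mu(S)} 2^{-1/\mu(S)}.
\]
There is no serious obstacle here: every ingredient (the infimum formula for $h(\mu)$, concavity of $-x\log x$, and the binary entropy bound) is standard. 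The only thing to watch is bookkeeping with the exponents when rearranging the additive bound into the stated multiplicative form. I note that ergodicity of $\mu$ is not actually used in this argument, only that $\mu$ is a shift-invariant MME; presumably the theorem is stated for ergodic MMEs because that is the context in which it will be applied later in the paper.
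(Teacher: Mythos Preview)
Your argument is correct. The paper actually does not prove this statement at all; it quotes it from \cite{gapspec} (where it appears for general expansive systems and equilibrium states) and presents it here ``without proof.'' Your entropy-splitting argument---using $nh(X) = nh(\mu) \leq H_\mu(\mathcal{P}^n)$ from subadditivity, then bounding each half of the partition entropy via concavity and absorbing the cross term into $\log 2$---is exactly the standard proof of this type of inequality, and is almost certainly what the cited reference does once specialized to zero potential. Your remark that ergodicity is not used is also accurate; only shift-invariance and $h(\mu) = h(X)$ are needed.
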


We can now state our main uniqueness criterion.

\begin{theorem}\label{mainthm0}
If $X$ is a subshift, there exists $C$ so that $|\mathcal{L}_n(X)| < Ce^{nh(X)}$ for all $n$, there exist $G \subset \mathcal{L}(X)$ and $R \in \mathbb{N}$ where for all $v, w \in G$, there exists $y \in \mathcal{A}^R$ so that $vyw \in \mathcal{L}(X)$, and for every ergodic MME $\mu$, there exist $\epsilon > 0$ and a syndetic $S$ so that $\mu(G_n) > \epsilon$ for every $n \in S$, then $X$ has a unique measure of maximal entropy.
\end{theorem}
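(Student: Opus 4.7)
The plan is to argue by contradiction: suppose $X$ admits two distinct ergodic MMEs $\mu_1 \neq \mu_2$ (necessarily mutually singular) and derive a contradiction with $|\mathcal{L}_N(X)| < Ce^{Nh(X)}$ for some large $N$.

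First I would construct separating typical sets. Using Shannon-McMillan-Breiman together with Birkhoff's theorem applied to a continuous $f$ with $\int f\,d\mu_1 \neq \int f\,d\mu_2$, I can find, for each sufficiently large $n$, disjoint subsets $T^{(1)}, T^{(2)} \subset \mathcal{L}_n(X)$ with $\mu_i(T^{(i)}) > 1-\delta$ and each $w \in T^{(i)}$ satisfying $\mu_i([w]) \leq e^{-n(h(X)-\delta)}$; disjointness is enforced by requiring the length-$n$ Birkhoff average of $f$ on each cylinder to be close to the corresponding $\int f\,d\mu_i$. Intersecting with $G_n$ and using the hypothesis $\mu_i(G_n) > \epsilon$ on its syndetic set $S_i$, the sets $T^{(i)}_\ast := T^{(i)} \cap G_n$ remain disjoint and satisfy $\mu_i(T^{(i)}_\ast) \geq \epsilon/2$ for arbitrarily large $n$ (in $S_1$ respectively $S_2$, permitting distinct lengths $n_1, n_2$ for the two measures in the counting step below).

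Theorem~\ref{specbd} applied to each $T^{(i)}_\ast$ then gives, together with $|\mathcal{L}_n(X)| < Ce^{nh(X)}$, a uniform lower bound $|T^{(i)}_\ast| \geq c_0\,e^{nh(X)}$ for some positive constant $c_0 = c_0(C, \epsilon)$ independent of $n$. For a large integer $k$ and any choice of labels $(i_1, \ldots, i_k) \in \{1, 2\}^k$ and blocks $v_j \in T^{(i_j)}_\ast$, I want to use specification to produce gaps $y_j \in \mathcal{A}^R$ for which $v_1 y_1 v_2 \cdots y_{k-1} v_k \in \mathcal{L}_{kn + (k-1)R}(X)$; disjointness of $T^{(1)}_\ast, T^{(2)}_\ast$ makes this encoding injective in the block sequence, so that one would obtain at least $(2c_0)^k e^{knh(X)}$ distinct words of length $kn + (k-1)R$, contradicting $|\mathcal{L}_{kn + (k-1)R}(X)| < Ce^{(kn + (k-1)R)h(X)}$ for suitably chosen $k$ and $n$.

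The main obstacle is that the hypothesis provides only pairwise specification, not $k$-fold, so the above multi-block concatenation does not follow immediately. The way around this is the measure-theoretic character of the hypothesis: by Birkhoff applied to the $\sigma^{n+R}$-action on $(X, \mu_i)$, a $\mu_i$-typical point naturally contains a density-at-least-$\epsilon/2$ family of non-overlapping $T^{(i)}_\ast$-blocks at the aligned positions $0, n+R, 2(n+R), \ldots$, so long mono-labeled concatenations of $T^{(i)}_\ast$-blocks already live in $\mathcal{L}(X)$ for free from existing orbits. Pairwise specification is then invoked only at the few ``switch points'' of the label sequence, and a careful count combining freely available mono-labeled runs with specification-spliced transitions still yields enough distinct words to violate bounded supermultiplicativity. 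This is precisely where the measure-theoretic (as opposed to topological) specification hypothesis plays its essential role, distinguishing this approach from the Climenhaga--Thompson-style arguments.
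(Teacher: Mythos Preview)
Your overall architecture (contradiction via two ergodic MMEs, Theorem~\ref{specbd} for the lower bound, overcounting words in $\mathcal{L}_N(X)$) is right, and you correctly flag the obstacle: the hypothesis gives only pairwise specification, so the $k$-fold concatenation indexed by $\{1,2\}^k$ is unjustified. But your proposed workaround does not close this gap. Birkhoff for $\sigma^{n+R}$ only tells you that a $\mu_i$-typical point has positive \emph{density} of aligned positions landing in $T^{(i)}_\ast$; it does not produce a long window in which \emph{every} aligned block lies in $T^{(i)}_\ast$, so you cannot extract ``mono-labeled concatenations of $T^{(i)}_\ast$-blocks'' with free choice of the constituent blocks. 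And even granting such runs, any label sequence with more than one switch still needs more than one splice, which pairwise specification does not supply. Restricting to a single switch collapses your $2^k$ label choices to $O(k)$, at which point you are no longer running an independent argument but are forced back onto the paper's idea.

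The paper's proof never attempts $k$-fold concatenation. It uses a \emph{single} splice $vyw$ with $v \in G_j$, $w \in G_k$, $j+k$ essentially fixed, and obtains the extra factor by varying the split point $j$ over linearly many values. The real work is showing that different split points yield distinct words, and this is where your toolkit is inadequate: you separate $\mu_1,\mu_2$ via an ordinary Birkhoff average of some $f$, which only controls the statistic of the \emph{full} word. The paper instead chooses a clopen $T$ with $\mu(T),\nu(T^c)$ small and applies the \emph{maximal ergodic theorem} to obtain sets $W_n, V_n$ (of large $\mu$-, $\nu$-measure respectively) in which \emph{every prefix} of each $w \in W_n$ and \emph{every suffix} of each $v \in V_n$ has its empirical $T$-frequency on the correct side of a fixed threshold. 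This uniform control over all prefixes and suffixes is precisely what forces words assembled at two different split points to differ (the overlap region would have to satisfy incompatible frequency bounds), and it is the key idea your proposal is missing.
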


\begin{proof}

Our technique is extremely similar to that from \cite{gapspec}; that proof applied to arbitrary expansive systems and equilibrium states for nonzero potentials, and so here we reframe the argument for our simpler setting of a subshift and zero potential/MMEs. 

Assume that $X$ has the properties from the theorem, and assume for a contradiction that $X$ has unequal ergodic MMEs $\mu$ and $\nu$. By assumption, there exist $\delta > 0$ and syndetic sets $A, B$ so that $\mu(G_n) > \delta$ for all $n \in A$ and $\nu(G_n) > \delta$ for all $n \in B$. Unequal ergodic measures are mutually singular, and so there exists a clopen set $T$ so that $\mu(T), \nu(T^c) < \delta/5$. Choose $N$ large enough that it is greater than $R$, is an upper bound on the gaps in $A$ and $B$, and so that $T$ can be written as a union of cylinder sets of length $N$. 

Then, as was done in \cite{gapspec}, we can use the maximal ergodic theorem (specifically, Corollary 2.12 from \cite{gapspec} for $f = \chi_T$ and 
$\lambda = 2/5$), to define, for every $n$, $W_n \subset \mathcal{L}_n(X)$ with $\mu(W_n) > 1 - \delta/2$ so that every prefix of every $w \in W_n$ has number of subwords in $T$ less than $2/5$ its length. Similarly (by using $f = \chi_{T^c}$, $\lambda = 2/5$, and $\sigma^{-1}$ rather than $\sigma$), for every $n$, we define $V_n \subset \mathcal{L}_n(X)$ with $\nu(V_n) > 1 - \delta/2$ where every suffix of every $v \in V_n$ has number of subwords in $T^c$ less than $2/5$ its length. 

For every $n$, define $V'_n = V_n \cap G_n$ and $W'_n = W_n \cap G_n$; then $\mu(W'_n) > \delta/2$ for $n \in A$ and $\nu(V'_n) > \delta/2$ for $n \in B$. By Theorem~\ref{specbd} and the assumed upper bound on $\mathcal{L}_n(X)$, there is a constant $E$ so that $|W'_n| > E e^{nh(X)}$ for $n \in A$ and 
$|V'_n| > E e^{nh(X)}$ for $n \in B$. Now, for large enough $n$, we will create too many words in $\mathcal{L}_{n+i}(X)$ for some $0 \leq i < 3N$ (achieving a contradiction) by using the assumed property of $G$ to combine words in $V'_j$ and $W'_k$ for various lengths $j,k$.

Specifically, for every $0 \leq i < n/6N$, by definition of $N$, there exist $s, t \in [0, N)$ so that $6iN + s \in A$ and $n - 6iN + t \in B$. This implies that there is a set $C \subset [0, n/6N)$ of $i$-values with $|C| \geq \frac{n}{6N^3}$ so that all $i \in C$ share the same $s,t$; we will from now on only refer to $i \in C$ and treat $s, t$ as independent of $i$. Now, for every $i \in C$ and every $v \in V'_{6Ni + s}, w \in W'_{n - 6Ni + t}$, $v$ and $w$ are in $G$, and so there exists $y \in \mathcal{A}^R$ so that 
$vyw \in \mathcal{L}_{n + s + t + R}(X)$ by assumption. Clearly, for fixed $i$, all pairs $(v,w)$ yield different words $vyw$. We claim that distinct values of $i$ will also always yield different words. To wit, choose any $i_1 < i_2$ and any $v_1 \in V'_{6Ni_1 + s}$, $v_2 \in V'_{6Ni_2 + s}$, $w_1 \in W'_{n - 6Ni_1 + t}$, and $w_2 \in 
W'_{n - 6Ni_2 + t}$, define $y_1, y_2 \in \mathcal{A}^R$ so that $v_1 y_1 w_1, v_2 y_2 w_2 \in \mathcal{L}(X)$, and suppose for a contradiction that $v_1 y_1 w_1 = v_2 y_2 w_2$. 

Then the word occupying locations $6Ni_1 + s + R + 1$ through $6Ni_2 + s$, call it $u$, is both a prefix of $w_1$ and a suffix of $v_2$. Then $u$ has number of subwords in $T$ less than $2/5$ its length and number of subwords in $T^c$ less than $2/5$ its length. However, $u$ contains more than $|u| - N$ $N$-letter subwords, which is greater than $(4/5) |u|$ since $|u| = 6N(i_2 - i_1) - R > 5N$, and so we have a contradiction. 

We now know that all $i \in C$ and pairs $v, w$ yield distinct words in $\mathcal{L}_{n+s+t+R}(X)$, and so
\[
|\mathcal{L}_{n + 3N}(X)| \geq |\mathcal{L}_{n + s + t + R}(X)| \geq \sum_{i \in C} |V'_{6iN}| |W'_{n - 6iN}| \geq e^{-3Nh(X)} (n/6N^3) E^2 e^{(n+3N)h(X)}.
\]
This clearly contradicts the assumed upper bound on $|\mathcal{L}_n(X)|$ for large enough $n$, and so our original assumption was wrong, and $X$ has a unique MME.



\end{proof}

\begin{remark} 
This proof would go through with few changes even if words from $G$ were combinable with a gap which is not constant, but grows sublogarithmically as a function of the lengths of the combined words (this was the type of hypothesis originally used in \cite{gapspec}). However, we are aware of no simple examples which require this more complicated hypothesis along with reduction to the subset $G$.
\end{remark}

With a slightly stronger weakened specification hypothesis, we can also prove a Gibbs lower bound on $G$ for the unique MME.

\begin{theorem}\label{gibbsthm0}
If $X$ is a subshift with unique MME $\mu$, there exists $C$ so that $|\mathcal{L}_n(X)| < Ce^{nh(X)}$ for all $n$, there exist $G' \subset \mathcal{L}(X)$ and $R \in \mathbb{N}$ so that for all $u,v,w \in G'$, there exist
$y,z \in \mathcal{A}^R$ for which $uyvzw \in \mathcal{L}(X)$, and there also exist $\epsilon > 0$ and a syndetic $S$ so that $\mu(G'_n) > \epsilon$ for every $n \in S$, then $G'$ has the following Gibbs property: there exists $D$ so that for all $w \in G'$, 
\[
\mu([w]) \geq De^{-|w|h(X)}.
\]
\end{theorem}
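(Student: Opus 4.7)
The plan is to combine the three-way concatenation with the uniqueness of the MME via a combinatorial count. First, Theorem~\ref{specbd} applied to $G'_n$ with $n \in S$, using $\mu(G'_n) > \epsilon$ and $|\mathcal{L}_n(X)| < Ce^{nh(X)}$, yields a constant $E > 0$ with $|G'_n| \geq Ee^{nh(X)}$ for every $n \in S$. Fix $w \in G'$ with $|w| = m$ and set $N = 2n + m + 2R$. For each pair $(a, b) \in G'_n \times G'_n$ the three-way concatenation produces $y_{a,b}, z_{a,b} \in \mathcal{A}^R$ with $u_{a,b} := ay_{a,b}wz_{a,b}b \in \mathcal{L}_N(X)$, and distinct pairs produce distinct words, so at least $|G'_n|^2 \geq E^2 e^{2nh(X)}$ length-$N$ words carry $w$ in position $n+R+1$. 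Compared with $|\mathcal{L}_N(X)| \leq Ce^{Nh(X)}$, this represents a fraction at least $(E^2/C)e^{-(m+2R)h(X)}$ of all length-$N$ words, which is the correct order of magnitude for the desired bound on $\mu([w])$.

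To convert this combinatorial fraction into a measure-theoretic lower bound, I would construct a sequence of shift-invariant probability measures $\nu_N$ on $X$ with $h(\nu_N) \to h(X)$; upper semi-continuity of entropy together with uniqueness of the MME force every weak-$*$ limit of $\nu_N$ to equal $\mu$, so if $\nu_N$ can also be arranged to satisfy $\nu_N([w]) \geq De^{-mh(X)}$ along a cofinal subsequence, weak-$*$ convergence on the clopen set $[w]$ yields $\mu([w]) \geq De^{-mh(X)}$. This is a soft-analysis analog of the classical Bowen-style equidistribution-to-MME argument.

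The hard part is constructing $\nu_N$ with both properties simultaneously. A naive choice of $\nu_N$ as the shift-averaged uniform measure on the $u_{a,b}$ yields $\nu_N([w])$ of order only $1/N$, since each $u_{a,b}$ carries $w$ at a single position while shift-averaging spreads its mass across $N$ positions. One potential resolution exploits that $S+S$ is syndetic (when $S$ is), which lets one obtain $|T_N(w, j)| \geq E^2 e^{(N-m-2R)h(X)}$ for $\Omega(N)$ positions $j$ (by varying the decomposition $N - m - 2R = n_1 + n_2$ with $n_1, n_2 \in S$), thereby preserving the bound under shift-averaging. Alternatively, one can proceed by contradiction in the style of the proof of Theorem~\ref{mainthm0}: assuming $\mu([w]) < De^{-mh(X)}$, apply the maximal ergodic theorem to $\chi_{[w]}$ with a small threshold to extract subsets $V'_n, W'_n \subset G'_n$ of $\mu$-typical words whose suffixes (respectively prefixes) contain at most $O(D e^{-mh(X)})$ density of occurrences of $w$; combining such words via three-way concatenation with $w$ as the middle produces, after a pigeonhole over decompositions analogous to that of Theorem~\ref{mainthm0}, too many distinct length-$N$ words to be accommodated by $|\mathcal{L}_N(X)| \leq Ce^{Nh(X)}$, forcing the desired Gibbs bound.
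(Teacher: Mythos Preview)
Your overall plan is correct and coincides with the paper's argument: the paper uses exactly your ``Resolution~1.'' Concretely, it takes the standard Walters measures
\[
\nu_n := \frac{1}{n\,|\mathcal{L}_n(X)|} \sum_{u \in \mathcal{L}_n(X)} \sum_{0 \le i < n} \sigma^i \delta_{x(u)},
\]
notes that uniqueness of the MME forces $\nu_n \to \mu$ weak-$*$, and then bounds $\nu_n([v])$ from below by counting, for each shift $i$ in an interval of length $\Omega(n)$, how many $u \in \mathcal{L}_n(X)$ carry $v$ at position~$i$. This last count is exactly your three-way concatenation estimate, done at every position~$i$.

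Two small points where the paper's execution is cleaner than your sketch. First, the paper averages over \emph{all} of $\mathcal{L}_n(X)$, not over the constructed words $u_{a,b}$; this makes the convergence $\nu_n \to \mu$ immediate from the Variational Principle argument and sidesteps any need to compute $h(\nu_n)$. Second, you do not need $S+S$: for each target position~$i$, syndeticity of $S$ alone gives some $j \in S$ within $N$ of $i-R$ and some $k \in S$ within $N$ of $n-|v|-R-i$; the resulting word $uyvzw$ may have length slightly less than~$n$, and one simply extends it by arbitrary letters on each side to reach length~$n$ (losing only a fixed multiplicative constant). This works for \emph{every} $i$ in the relevant range, giving $\Omega(n)$ positions directly without any additive combinatorics on~$S$.

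Your alternative ``Resolution~2'' via the maximal ergodic theorem is not what the paper does and looks harder to make work here, since the contradiction in Theorem~\ref{mainthm0} came from producing \emph{too many} words of a given length, whereas assuming $\mu([w])$ is small does not obviously overproduce words.
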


\begin{proof}
By an argument of Walters \cite{walters}, we may explicitly construct an MME on $X$ as follows. For every $n$ and every $w \in \mathcal{L}_n(X)$, choose any
$x(w) \in [w]$. For each $n$, define 
\[
\nu_n := \frac{1}{n |\mathcal{L}_n(X)|} \sum_{w \in \mathcal{L}_n(X), 0 \leq i < n} \sigma^i \circ \delta_{x(w)}.
\]
Then, any limit point of the measures $\nu_n$ (in the usual weak-* topology) is an MME on $X$ (see Theorem 8.6 from \cite{walters}). Since we already know $\mu$ is the only MME on $X$, $\nu_n \rightarrow \mu$ weak-*. By Theorem~\ref{specbd}, the assumed upper bound on $|\mathcal{L}_n(X)|$, and the assumed fact that $\mu(G'_n)$ is bounded away from $0$ along a syndetic set $S$, we know that there exists $E > 0$ so that $|G'_n| > Ee^{-nh(X)}$ for all $n \in S$. Choose any $N$ which is both greater than $R$ and an upper bound on gaps of $S$.

Now, suppose $v \in G'$, and fix any $n > |v| + 3N$. Choose any $N + R \leq i < n - N - R - |v|$; we wish to give a lower bound on the number of $w \in \mathcal{L}_n(X)$ which contain $v$ starting at distance $i$ from the beginning of $w$. By definition of $N$, there exist $j \in (i - R - N, i - R] \cap S$ and $k \in (n - |v| - R - i - N, n - |v| - R - i] \cap S$. We know that for every 
$u \in G'_j$ and $w \in G'_k$, there exist $y,z \in \mathcal{A}^R$ so that $uyvzw \in \mathcal{L}_{j + |v| + 2R + k}(X)$.  This yields at least $|G'_j| |G'_k| \geq E^2 e^{(j + k)h(X)} \geq E^2 e^{(n - |v| - 4N) h(X)}$ words in $\mathcal{L}_{j + |v| + 2R + k}(X)$, and by extending each arbitrarily to the left by $i - j - R$ letters and to the right by $n - |v| - R - i - k$ letters, we get at least $E^2 e^{(n - |v| - 4N) h(X)}$ words
in $\mathcal{L}_n(X)$ containing $v$ starting at distance $i$ from the beginning. This implies that
\begin{multline*}
\nu_n([v]) = \frac{1}{n |\mathcal{L}_n(X)|} \sum_{w \in \mathcal{L}_n(X), 0 \leq i < n} \sigma^i \circ \delta_{x(w)} 
\\ \geq \frac{1}{n |\mathcal{L}_n(X)|} \sum_{i = N+R}^{n - N - |v| - R - 1} E^2 e^{(n - |v| - 4N) h(X)}
= \frac{n - |v| - 2N - 2R}{n} \cdot \frac{E^2 e^{(n - |v| - 4N) h(X)}}{|\mathcal{L}_n(X)|}.
\end{multline*}

By the assumed upper bound on $|\mathcal{L}_n(X)|$, $\liminf \nu_n([v]) \geq \frac{E^2}{Ce^{4Nh(X)}} e^{-|v|h(X)}$, and so
$\mu([v]) \geq \frac{E^2}{Ce^{4Nh(X)}} e^{-|v|h(X)}$, completing the proof. 

\end{proof}

We recall that with even weaker hypotheses, one can prove a similar upper Gibbs bound for every word in $\mathcal{L}(X)$; this essentially appears
in \cite{CT0} as Lemma 5.12, but we restate here to make clear the hypotheses that are required.

\begin{theorem}\label{easygibbs}
If $X$ is a subshift with a unique MME $\mu$ and there exists $C$ so that $|\mathcal{L}_n(X)| \leq Ce^{nh(X)}$ for all $n$, then there exists a constant $D'$ so that for all $w \in \mathcal{L}(X)$,
\[
\mu([w]) \leq D'e^{-|w|h(X)}.
\]
\end{theorem}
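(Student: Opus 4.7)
The plan is to reuse the Walters-style construction of an MME that appeared in the proof of Theorem~\ref{gibbsthm0}. Choose for each $v \in \mathcal{L}_n(X)$ a point $x(v) \in [v]$, and define
\[
\nu_n := \frac{1}{n|\mathcal{L}_n(X)|} \sum_{v \in \mathcal{L}_n(X)} \sum_{0 \leq i < n} \sigma^i \circ \delta_{x(v)}.
\]
Since $X$ has a unique MME and every weak-$*$ limit of $\nu_n$ is an MME, $\nu_n \to \mu$ in the weak-$*$ topology. Cylinder sets are clopen, so in fact $\nu_n([w]) \to \mu([w])$ for every $w \in \mathcal{L}(X)$, and it suffices to show that $\nu_n([w]) \leq D'e^{-|w|h(X)}$ eventually, for some $D'$ independent of $w$.

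Fix $w \in \mathcal{L}(X)$ with $|w| = k$, and fix $n > k$. For each $v \in \mathcal{L}_n(X)$ and $0 \leq i < n$, the indicator $\chi_{[w]}(\sigma^i x(v))$ is nonzero only if the letters of $x(v)$ in positions $i+1, \ldots, i+k$ form $w$. When $i \leq n - k$, this constrains $v_{i+1}\cdots v_{i+k} = w$, and the number of $v \in \mathcal{L}_n(X)$ realizing this is at most $|\mathcal{L}_i(X)| \cdot |\mathcal{L}_{n-i-k}(X)| \leq C^2 e^{(n-k)h(X)}$. For the remaining $k - 1$ values of $i$ with $i > n - k$, a trivial bound of $|\mathcal{L}_n(X)| \leq Ce^{nh(X)}$ per value suffices. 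Combining and dividing by $n|\mathcal{L}_n(X)|$, then using the basic lower bound $|\mathcal{L}_n(X)| \geq e^{nh(X)}$ from \eqref{basic}, yields
\[
\nu_n([w]) \leq \frac{n C^2 e^{(n-k)h(X)} + k C e^{nh(X)}}{n \cdot e^{nh(X)}} = C^2 e^{-kh(X)} + \frac{kC}{n}.
\]
Letting $n \to \infty$ with $k = |w|$ fixed gives $\mu([w]) \leq C^2 e^{-|w|h(X)}$, so one can take $D' = C^2$.

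There is no real obstacle here; the only point requiring a moment's thought is the per-position counting bound, which just uses that any $v \in \mathcal{L}_n(X)$ with $w$ pinned at a given position splits into an $i$-letter prefix and an $(n-i-k)$-letter suffix, each of which must itself lie in the language. The hypothesis $|\mathcal{L}_m(X)| \leq Ce^{mh(X)}$ is then applied to each factor, and the error term arising from positions $i$ near $n$ is swept up by the factor of $1/n$. No specification or large-measure hypothesis is used, which is consistent with the remark that this is a much weaker statement than Theorem~\ref{gibbsthm0}.
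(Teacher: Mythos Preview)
Your proof is correct and follows essentially the same approach as the paper: construct the Walters averaging measures $\nu_n$, note they converge to the unique MME $\mu$, bound the number of $n$-words containing $w$ at each position $i$ by $|\mathcal{L}_i(X)|\,|\mathcal{L}_{n-i-|w|}(X)| \leq C^2 e^{(n-|w|)h(X)}$, use the trivial bound for the remaining $|w|$ edge positions, and divide by $|\mathcal{L}_n(X)| \geq e^{nh(X)}$ to obtain $\mu([w]) \leq C^2 e^{-|w|h(X)}$. The only cosmetic differences are that you bound the number of ``good'' positions by $n$ rather than $n-|w|+1$ and explicitly invoke clopenness of cylinders to pass to the limit.
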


\begin{proof}
Just as in the proof of Theorem~\ref{gibbsthm0}, we know that the unique MME $\mu$ is the weak-* limit of the measures 
\[
\nu_n := \frac{1}{n |\mathcal{L}_n(X)|} \sum_{w \in \mathcal{L}_n(X), 0 \leq i < n} \sigma^i \circ \delta_{x(w)}.
\]
Choose any $v \in \mathcal{L}(X)$, and any $n > |v|$. Then, for every $0 \leq i < n - |v|$, we can easily give an upper bound on the number of $w \in \mathcal{L}_n(X)$ which contain $v$ starting at distance $i$ from the beginning of $w$; it is obviously less than or equal to 
$|\mathcal{L}_i(X)| |\mathcal{L}_{n - i - |v|}(X)|$, which is in turn less than or equal to $C^2 e^{(n - |v|)h(X)}$ by assumption.  For other values of $i$, we make the trivial observation that this number is not more than $|\mathcal{L}_n(X)|$. Therefore,
\begin{multline*}
\nu_n([v]) = \frac{1}{n |\mathcal{L}_n(X)|} \sum_{w \in \mathcal{L}_n(X), 0 \leq i < n} \sigma^i \circ \delta_{x(w)} 
\\ \leq \frac{1}{n |\mathcal{L}_n(X)|} \left(\sum_{i = 0}^{n - |v|} C^2 e^{(n - |v|) h(X)} + \sum_{i = n - |v| + 1}^{n - 1} |\mathcal{L}_n(X)|\right)
\leq C^2 \frac{e^{(n-|v|) h(X)}}{|\mathcal{L}_n(X)|} + \frac{|v|}{n}.
\end{multline*}

By (\ref{basic}), $|\mathcal{L}_n(X)| \geq e^{nh(X)}$ for all $n$, and so $\limsup \nu_n([v]) \leq C^2 e^{-|v|h(X)}$. This implies that $\mu([v]) \leq C^2 e^{-|v|h(X)}$ and completes the proof. 

\end{proof}

The remainder of this paper will be devoted to showing that if our forbidden list $\mathcal{F}$ is `small' (in the sense of slow growth rate of 
$|\mathcal{F}_n|$), then $X$ satisfies the hypotheses of Theorems~\ref{mainthm0}, \ref{gibbsthm0}, and \ref{easygibbs}. We in fact will always be able to verify these hypotheses for $R = 0$, meaning that the sets $G$/$G'$ are actually concatenable rather than just being combinable with constant gap.

\section{Upper bounds on $|\mathcal{L}_n(X)|$}\label{ub}

From now on, we consider subshifts of the form $X(\mathcal{A}, \mathcal{F})$, and will assume that $X$ refers to this subshift unless explicitly stated otherwise.

In this section, we will show that `small' $\mathcal{F}$ implies an upper bound of the form $|\mathcal{L}_n(X)| < Ce^{nh(X)}$. We first adapt Miller's proof of 
Theorem~\ref{ogmiller} to yield a simple lower bound on entropy.

\begin{theorem}\label{millerent}
If there exists $c > |\mathcal{A}|^{-1}$ and $k < |\mathcal{A}|$ so that $\sum_{v \in \mathcal{F}} c^{|v|} < c(|\mathcal{A}| - k + 1) - 1$, then 
$h(X) \geq \log k$.
\end{theorem}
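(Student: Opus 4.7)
The plan is to adapt the proof of Theorem~\ref{ogmiller} by upgrading the key branching step from ``at least one'' valid extension to ``at least $k$'' valid extensions at each stage. The same weight function $f_c$ and the identity
\[
\sum_{a \in \mathcal{A}} f_c(wa) \;=\; \frac{1}{c}\Big(\sum_{v \in \mathcal{F}} c^{|v|} + f_c(w)\Big)
\]
from Miller's argument apply verbatim. Assuming $f_c(w) < 1$, the strengthened hypothesis yields
\[
\sum_{a \in \mathcal{A}} f_c(wa) \;<\; \frac{1}{c}\bigl(c(|\mathcal{A}| - k + 1) - 1 + 1\bigr) \;=\; |\mathcal{A}| - k + 1.
\]
Since each $f_c(wa) \geq 0$, at most $|\mathcal{A}| - k$ of the $|\mathcal{A}|$ values $f_c(wa)$ can be $\geq 1$ (otherwise the sum itself would be $\geq |\mathcal{A}| - k + 1$), so at least $k$ letters $a \in \mathcal{A}$ satisfy $f_c(wa) < 1$.

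Iterating this starting from the empty word (for which $f_c = 0 < 1$) produces a rooted tree in which every vertex has at least $k$ children; at depth $n$ there are at least $k^n$ distinct words $w \in \mathcal{A}^n$ with $f_c < 1$ on every prefix. Exactly as in Miller's argument, each such $w$ contains no subword in $\mathcal{F}$, since any forbidden subword would appear as a suffix of some prefix of $w$ and force $f_c$ of that prefix to be at least $c^0 = 1$. This yields at least $k^n$ words of length $n$ in the language of the associated one-sided subshift.

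To convert this into the corresponding lower bound on $|\mathcal{L}_n(X)|$ for the two-sided subshift, I apply the same construction to the reversed forbidden list $\mathcal{F}^R$ (the hypothesis is preserved since $|\mathcal{F}^R_n| = |\mathcal{F}_n|$), which gives the symmetric left-extendability of the analogous family of words. A standard compactness/diagonalization argument in $\mathcal{A}^{\mathbb{Z}}$ then embeds each of the $k^n$ words constructed above into a biinfinite sequence in $X$, so $|\mathcal{L}_n(X)| \geq k^n$ and hence $h(X) \geq \log k$ by the definition of topological entropy.

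The main content is the counting upgrade, and it is the only place where the strengthened hypothesis is used in an essential way; the passage from one-sided to two-sided language is routine.
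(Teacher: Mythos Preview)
Your core argument---the branching upgrade from ``at least one'' to ``at least $k$'' extensions using Miller's weight function---is exactly what the paper does, and your justification of that step is correct and complete.

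The one place where you diverge from the paper is the passage from one-sided to two-sided. The paper simply observes that the two-sided subshift $X$ is the natural extension of the one-sided subshift $X'$, and hence $h(X) = h(X')$; since you have shown $|\mathcal{L}_n(X')| \geq k^n$, this immediately gives $h(X) \geq \log k$. Your proposed route---applying the construction to $\mathcal{F}^R$ and then using compactness to embed each of your $k^n$ words into a point of $X$---has a gap as written: the $\mathcal{F}^R$ construction produces its \emph{own} family of left-extendable words, not left-extendability of the specific $k^n$ words you built from $\mathcal{F}$. In general $\mathcal{L}_n(X') \supsetneq \mathcal{L}_n(X)$, so there is no reason each of your words individually lies in $\mathcal{L}_n(X)$. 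This does not damage the entropy conclusion, since the natural-extension fact (which you rightly call routine) closes the argument without needing $|\mathcal{L}_n(X)| \geq k^n$ pointwise; just replace your last paragraph with that one-line invocation.
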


\begin{proof}
The proof is extremely similar to that of Theorem~\ref{ogmiller}. We simply recall that for every $w \in \mathcal{A}^*$, $\sum_{a \in \mathcal{A}} f_c(wa) = 
c^{-1}\left(\sum_{v \in \mathcal{F}} c^{|v|} + f_c(w)\right)$, and so under the hypotheses of the theorem, 
$f_c(w) < 1 \Longrightarrow \sum_{a \in \mathcal{A}} f_c(wa) < |\mathcal{A}| - k + 1$, implying that there are at least $k$ letters $a$ so that $f_c(wa) < 1$. 
If we define $X'$ to be the one-sided subshift induced by $\mathcal{A}$ and $\mathcal{F}$, then since $f_c(\varnothing) = 0 < 1$, we can prove by induction that 
$|\mathcal{L}_n(X')| \geq k^n$ for all $n$, and so that $h(X') \geq \log k$. We need to consider the two-sided subshift $X$ induced by $\mathcal{A}$ and $\mathcal{F}$ instead, but this is well-known to have the same entropy (it is the so-called natural extension of $X'$), which completes the proof.

\end{proof}

We will also need a simple corollary from the Pliss lemma to derive a technical result about the behavior of ratios $\frac{|\mathcal{L}_n(X)|}{|\mathcal{L}_{n-k}(X)|}$.

\begin{theorem}\label{plissthm}
For any subshift $X$ and $\beta < 2h(X) - \log |\mathcal{A}|$, the set
\[
S := \left\{n \ : \ \forall k < n, \frac{|\mathcal{L}_n(X)|}{|\mathcal{L}_{n-k}(X)|} \geq e^{k \beta}\right\}
\]
has lower density greater than $\frac{1}{2}$.
\end{theorem}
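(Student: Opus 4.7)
The plan is to apply the Pliss lemma (Lemma~\ref{plisslem}) directly to the telescoping sequence of log-ratios of successive language sizes. Setting $a_j := \log |\mathcal{L}_j(X)| - \log |\mathcal{L}_{j-1}(X)|$ for $j \geq 1$ (with $|\mathcal{L}_0(X)| = 1$), the sum $a_1 + \ldots + a_n$ telescopes to $\log |\mathcal{L}_n(X)|$, and more generally $a_{k+1} + \ldots + a_n = \log |\mathcal{L}_n(X)| - \log |\mathcal{L}_k(X)|$. Thus the Pliss condition $(a_{k+1} + \ldots + a_n)/(n-k) \geq \beta$ is precisely equivalent to $|\mathcal{L}_n(X)|/|\mathcal{L}_k(X)| \geq e^{(n-k)\beta}$; reindexing via $k' = n - k$ identifies the Pliss conclusion set with (a subset of) $S$.

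Next, I would verify the hypotheses of Lemma~\ref{plisslem}. The bound $0 \leq a_j \leq \log |\mathcal{A}|$ follows from two standard facts: every word in $\mathcal{L}(X)$ extends to the right (so $|\mathcal{L}_j(X)| \geq |\mathcal{L}_{j-1}(X)|$), and each $(j-1)$-letter word has at most $|\mathcal{A}|$ one-letter extensions (so $|\mathcal{L}_j(X)| \leq |\mathcal{A}| |\mathcal{L}_{j-1}(X)|$). So take $A = \log |\mathcal{A}|$. Moreover, the definition of topological entropy (and the existence of the limit in Definition~\ref{topent}) gives $\alpha := \liminf_{n \to \infty} (a_1 + \ldots + a_n)/n = \lim_{n \to \infty} \log|\mathcal{L}_n(X)|/n = h(X)$.

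Finally, I would translate the hypothesis $\beta < 2h(X) - \log|\mathcal{A}|$ into the Pliss density bound. With $\alpha = h(X)$ and $A = \log|\mathcal{A}|$, the inequality $\frac{\alpha - \beta}{A - \beta} > \frac{1}{2}$ rearranges to $\beta < 2\alpha - A = 2h(X) - \log|\mathcal{A}|$, so our assumption is precisely what is needed. Note also $\beta < \alpha$ is automatic since $h(X) \leq \log|\mathcal{A}|$ implies $2h(X) - \log|\mathcal{A}| \leq h(X)$. Lemma~\ref{plisslem} then gives a set of lower density strictly greater than $1/2$ on which the Pliss condition holds for every $0 \leq k < n$, and via the reindexing this set is contained in $S$.

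I do not foresee a real obstacle here beyond bookkeeping: the main subtleties are the offset between the Pliss indexing $k$ and the ratio indexing $k$ in the statement of $S$, and checking that endpoint values of $k$ (such as $k = 0$, which yields the trivial ratio $1 \geq 1$) are handled correctly so that $S$ contains the Pliss set rather than being strictly smaller. Since the quantity $\frac{\alpha - \beta}{A - \beta}$ is strictly greater than $1/2$ (not merely $\geq$), the strict lower density bound passes to $S$.
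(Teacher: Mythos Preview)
Your proposal is correct and follows exactly the paper's approach: apply the Pliss lemma with $a_j = \log|\mathcal{L}_j(X)| - \log|\mathcal{L}_{j-1}(X)|$, $A = \log|\mathcal{A}|$, and $\alpha = h(X)$, then observe that $\frac{\alpha-\beta}{A-\beta} > \frac{1}{2}$ is equivalent to $\beta < 2\alpha - A$. You have simply filled in a few details (the verification of $0 \le a_j \le A$ and the reindexing) that the paper leaves implicit.
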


\begin{proof}
This is an immediate corollary of the Pliss lemma with $a_n = \log |\mathcal{L}_n(X)| - \log |\mathcal{L}_{n-1}(X)|$, $\alpha = h(X)$, $A = \log |\mathcal{A}|$; 
note that $a_{n-k+1} + \ldots + a_n = \log \left(\frac{|\mathcal{L}_n(X)|}{|\mathcal{L}_{n-k}(X)|}\right)$, and that $\frac{\alpha - \beta}{A - \beta} > \frac{1}{2}$ since
$\beta < 2\alpha - A$.

\end{proof}

We are now prepared to prove the main result of this section.

\begin{theorem}\label{Lcombbd}
If there exists $\beta < 2h(X) - \log |\mathcal{A}|$ for which 
$\sum_{n = 1}^{\infty} n|\mathcal{F}_n| e^{-n \beta} < \frac{1}{36}$, then
$|\mathcal{L}_n(X)| < 4 e^{nh(X)}$ for all sufficiently large $n$.
\end{theorem}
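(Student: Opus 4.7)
The plan is to combine Pliss's lemma with a careful forward iteration of the elementary upper-bound recursion
\[
|\mathcal{A}||\mathcal{L}_m(X)| \leq |\mathcal{L}_{m+1}(X)| + \sum_{j \geq 1}|\mathcal{F}_j||\mathcal{L}_{m+1-j}(X)|,
\]
which follows by counting, for each pair $(w,a) \in \mathcal{L}_m(X) \times \mathcal{A}$, the ways that $wa$ can fail to lie in $\mathcal{L}_{m+1}(X)$ via a forbidden suffix. After bounding $|\mathcal{L}_n(X)|$ on Pliss-good indices, the bound is extended to all large $n$ by submultiplicativity.

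First I would apply Theorem~\ref{plissthm} with the given $\beta$ to obtain a set $S \subset \mathbb{N}$ of lower density strictly greater than $\tfrac{1}{2}$ on which $|\mathcal{L}_{n-k}(X)| \leq |\mathcal{L}_n(X)| e^{-k\beta}$ for every $n \in S$ and $0 \leq k < n$. By the observation immediately preceding Theorem~\ref{plissthm}, the sumset $S + S$ is cofinite in $\mathbb{N}$.

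Next, for $n \in S$ and a bounded constant $k$ to be chosen, I would iterate the one-step upper bound $k$ times to obtain
\[
|\mathcal{L}_n(X)| \leq |\mathcal{A}|^{-k}|\mathcal{L}_{n+k}(X)| + \sum_{j=0}^{k-1}|\mathcal{A}|^{-(j+1)}\sum_{m \geq 1}|\mathcal{F}_m||\mathcal{L}_{n+j+1-m}(X)|.
\]
Bound each term in the double sum as follows: when $m > j+1$, the index $n+j+1-m < n$ and Pliss gives $|\mathcal{L}_{n+j+1-m}(X)| \leq |\mathcal{L}_n(X)|e^{-(m-j-1)\beta}$; when $m \leq j+1$, the index is $\geq n$ and the trivial bound gives $|\mathcal{L}_{n+j+1-m}(X)| \leq |\mathcal{A}|^{j+1-m}|\mathcal{L}_n(X)|$. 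Swap the order of summation, use $|\mathcal{A}|^{-1} < e^{-\beta}$ (which follows from $\beta < \log|\mathcal{A}|$, itself implied by $\beta < 2h(X)-\log|\mathcal{A}|$), and collect the resulting geometric series; the tail is at most $|\mathcal{L}_n(X)| \cdot E_k$ where
\[
E_k \leq k\Sigma + \frac{e^\beta \Sigma}{|\mathcal{A}|-e^\beta}, \qquad \Sigma := \sum_m |\mathcal{F}_m|e^{-m\beta} \leq \sum_m m|\mathcal{F}_m|e^{-m\beta} < \tfrac{1}{36}.
\]
Choosing $k$ small enough that $E_k < \tfrac{1}{2}$ (which is possible since $\Sigma < \tfrac{1}{36}$) and invoking the convergence $\log|\mathcal{L}_m(X)|/m \to h(X)$ together with the decay of $(e^{h(X)}/|\mathcal{A}|)^k$ (afforded by $h(X) < \log|\mathcal{A}|$ which the hypothesis implies), one obtains $|\mathcal{L}_n(X)| \leq 2 e^{nh(X)}$ for all sufficiently large $n \in S$.

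Finally, for any sufficiently large $n$ the cofiniteness of $S+S$ and the density bound $d(S) > \tfrac{1}{2}$ let me write $n = n_1 + n_2$ with $n_1, n_2 \in S$ both large enough for the previous bound; submultiplicativity $|\mathcal{L}_{n_1+n_2}(X)| \leq |\mathcal{L}_{n_1}(X)|\cdot|\mathcal{L}_{n_2}(X)|$ then yields $|\mathcal{L}_n(X)| \leq (2e^{n_1 h(X)})(2e^{n_2 h(X)}) = 4 e^{nh(X)}$.

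The main obstacle is the joint calibration in the middle step: the same choice of $k$ must simultaneously control the tail ($E_k$ strictly below $\tfrac{1}{2}$, which needs $\sum m |\mathcal{F}_m| e^{-m\beta} < \tfrac{1}{36}$ and $|\mathcal{A}|/e^\beta$ bounded away from $1$) and give enough decay in $(e^{h(X)}/|\mathcal{A}|)^k$ to absorb the $|\mathcal{A}|^{-k}|\mathcal{L}_{n+k}(X)|$ term. The hypothesis $\beta < 2h(X) - \log|\mathcal{A}|$ plays a double role: it is exactly the threshold needed in Theorem~\ref{plissthm} to produce density greater than $\tfrac{1}{2}$ (hence $S + S$ cofinite, which the final step requires), and it leaves enough headroom between $e^\beta$ and $|\mathcal{A}|$ for the geometric tail $e^\beta/(|\mathcal{A}|-e^\beta)$ to stay finite at the scales needed.
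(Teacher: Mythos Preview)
Your proposal has a genuine gap in the middle step. After iterating the recursion $k$ times and using the Pliss bound on the tail, you arrive at
\[
|\mathcal{L}_n(X)| \leq \frac{1}{1-E_k}\,|\mathcal{A}|^{-k}\,|\mathcal{L}_{n+k}(X)|
\]
with $E_k < \tfrac12$, hence $|\mathcal{L}_n(X)| \leq 2|\mathcal{A}|^{-k}|\mathcal{L}_{n+k}(X)|$. To conclude $|\mathcal{L}_n(X)| \leq 2e^{nh(X)}$ you must bound $|\mathcal{L}_{n+k}(X)|$ from above, but the only input you cite is the convergence $\tfrac{1}{m}\log|\mathcal{L}_m(X)| \to h(X)$, which gives $|\mathcal{L}_{n+k}(X)| \leq e^{(n+k)(h(X)+\epsilon_n)}$ with $\epsilon_n \to 0$. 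Plugging in yields a stray factor $e^{n\epsilon_n}$, and killing it requires $k \geq c\,n\epsilon_n$; but your own estimate $E_k \geq k\Sigma - C$ forces $k$ to stay bounded. So the argument is circular: controlling the error $\epsilon_n$ to order $O(1/n)$ is exactly the bounded-supermultiplicativity conclusion you are trying to prove. There is no evident rescue---letting $k \to \infty$ makes the first term vanish but causes the tail to diverge, and hopping along $S$ accumulates a factor $2^t$ that swamps the gain.

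A secondary issue: the one-step inequality $|\mathcal{A}|\,|\mathcal{L}_m(X)| \leq |\mathcal{L}_{m+1}(X)| + \sum_j|\mathcal{F}_j|\,|\mathcal{L}_{m+1-j}(X)|$ is not justified by the counting you describe. If $w \in \mathcal{L}_m(X)$ and $wa$ contains no word of $\mathcal{F}$, it does not follow that $wa \in \mathcal{L}_{m+1}(X)$: membership in $\mathcal{L}(X)$ requires a biinfinite extension, and $wa$ may simply fail to extend. The inequality \emph{is} valid for the larger sets $\mathcal{L}'_m := \{w \in \mathcal{A}^m : w$ contains no word of $\mathcal{F}\}$, so this issue is repairable (one can check $\tfrac{1}{m}\log|\mathcal{L}'_m| \to h(X)$ via SFT approximation and run Pliss on $\mathcal{L}'$), but as written the justification is incomplete.

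The paper avoids both problems by a completely different route: for $n$ in the Pliss set it passes to the higher-power shift $X^n$ on alphabet $\mathcal{L}_n(X)$, bounds the new forbidden list $\mathcal{F}^{(n)}$ using the Pliss ratio control, and then applies Miller's entropy lower bound (Theorem~\ref{millerent}) with $k = |\mathcal{L}_n(X)|/2$ to get $nh(X) = h(X^n) > \log(|\mathcal{L}_n(X)|/2)$ directly. This yields the bound $|\mathcal{L}_n(X)| < 2e^{nh(X)}$ without ever needing an a priori upper bound on $|\mathcal{L}_{n+k}(X)|$.
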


\begin{proof}
Fix any $X$ and $\beta$ as in the statement. We first use Theorem~\ref{plissthm} to define the set $S = \{n \ : \ \forall k < n, \frac{|\mathcal{L}_n(X)|}{|\mathcal{L}_{n-k}(X)|} \geq
 e^{k\beta}\}$ of lower density greater than $\frac{1}{2}$. Fix any $n \in S$.

We will view letters of $\mathcal{L}_n(X)$ as an alphabet (for this proof, reference to a `letter' always means an element of $\mathcal{L}_n(X)$), and define a new set of forbidden words $\mathcal{F}^{(n)} \subset (\mathcal{L}_n(X))^*$. We say that $v \in \mathcal{F}^{(n)}$ if the concatenation of the `letters' $v_1, \ldots, v_m \in \mathcal{L}_n(X)$, viewed as a word over $\mathcal{A}$, contains some $w \in \mathcal{F}$, and if neither the concatenation of $v_1, \ldots, v_{m-1}$ nor the concatenation of $v_2, \ldots, v_m$ contains $w$. It should be clear from this definition that the alphabet $\mathcal{L}_n(X)$ and forbidden list 
$\mathcal{F}^{(n)}$ induce the $n$th higher power subshift $X^n$.

We first wish to bound from above the number of words in $(\mathcal{F}^{(n)})_j$ for each $j$. It is obvious that $(\mathcal{F}^{(n)})_1$ is empty for all $n$ since our `letters' $\mathcal{L}_n(X)$ do not themselves contain words in $\mathcal{F}$. 
So, if $v \in \mathcal{F}^{(n)}$, then $j \geq 2$ and $v$ (viewed as a concatenation) contains some $w \in \mathcal{F}$. By the fact that $v$ has no proper subword containing $w$, we know that $w$ has length between $(j-2)n + 2$ and $jn$, and we can parametrize by the length of $w$, which we call $i$, and the distance from the beginning of $v$ to the first letter of $w$, which we call $k$. Then for any $i$, the value of $k$ can be anywhere from $(j - 1)n - i - 1$ to 
$n - 1$. Also, for each $k$, $v$ is determined by $w$, $k$, and the portions of the first and last `letters' of $v$ not contained in $w$; these have lengths 
$k$ and $jn - i - k$, which are both less than or equal to $n$. This implies that
\begin{equation}\label{onlycase}
|(\mathcal{F}^{(n)})_j| \leq \sum_{i = (j - 2)n + 2}^{jn} \sum_{k = (j - 1)n - i - 1}^{n-1} |\mathcal{F}_i| |\mathcal{L}_k(X)| |\mathcal{L}_{jn - i - k}(X)|.
\end{equation}

We now note that by definition of $S$, for each $k$,
\begin{multline}\label{onlycase2}
|\mathcal{L}_k(X)| |\mathcal{L}_{jn - i - k}(X)| = |\mathcal{L}_n(X)|^2 \frac{|\mathcal{L}_k(X)|}{|\mathcal{L}_n(X)|} \frac{|\mathcal{L}_{jn - i - k}(X)|}{|\mathcal{L}_n(X)|} \\
\leq |\mathcal{L}_n(X)^2| e^{(-n + k - n + (jn - i - k))\beta} 
\leq |\mathcal{L}_n(X)^2| e^{(-i + (j - 2)n)\beta}.
\end{multline}

Combining (\ref{onlycase}) and (\ref{onlycase2}) yields
\begin{equation}\label{onlycase3}
|(\mathcal{F}^{(n)})_j| \leq |\mathcal{L}_n(X)|^2 \sum_{i = (j - 2)n + 2}^{jn} i |\mathcal{F}_i| e^{(-i + (j - 2)n)\beta}.
\end{equation}

We now use Theorem~\ref{millerent} and (\ref{onlycase3}) to bound the entropy $h(X^n) = nh(X)$ of $X^n$ from below. In particular, we claim that if $n$ is sufficiently large (and in $S$), the hypotheses of Theorem~\ref{millerent} are satisfied for $c = \frac{3}{|\mathcal{L}_n(X)|}$, alphabet $\mathcal{L}_n(X)$, $k = \frac{|\mathcal{L}_n(X)|}{2}$, and forbidden list $\mathcal{F}^{(n)}$. The relevant inequality we must verify is then
\begin{equation}\label{millerineq}
\sum_{j = 2}^{\infty} c^j |(\mathcal{F}^{(n)})_j| < \frac{c |\mathcal{L}_n(X)|}{2} - 1 = \frac{1}{2}.
\end{equation}

We now just bound from above the left-hand side of (\ref{millerineq}) using (\ref{onlycase3}):
\begin{equation}\label{onlycase4}
\sum_{j = 2}^{\infty} c^j |(\mathcal{F}^{(n)})_j| \leq
\sum_{j = 2}^{\infty} \left(\frac{3}{|\mathcal{L}_n(X)|} \right)^j |\mathcal{L}_n(X)|^2 e^{(j-2)n\beta} \sum_{i = (j - 2)n + 2}^{jn} i |\mathcal{F}_i| e^{-i\beta}.
\end{equation}

We recall by (\ref{basic}) that $|\mathcal{L}_n(X)| \geq e^{nh(X)}$ for all $n$. Therefore, for every $j$, 
\begin{equation}\label{onlycase5}
\left(\frac{3}{|\mathcal{L}_n(X)|} \right)^j |\mathcal{L}_n(X)|^2 e^{(j-2)n \beta} = \frac{3^j e^{(j-2)n \beta}}{|\mathcal{L}_n(X)|^{j-2}}
< \frac{3^j e^{(j-2)n \beta}}{e^{(j-2)n h(X)}} = 9 \left( e^{n(\beta - h(X))} \right)^{j-2}.
\end{equation}
Since $\beta < 2h(X) - \log |\mathcal{A}| \leq h(X)$, for large enough $n$, $e^{n(\beta - h(X))} < 1/3$, meaning that the expression in (\ref{onlycase5}) is less than $9$ for all $k$.
Combining with (\ref{onlycase4}) yields, for sufficiently large $n \in S$,
\[
\sum_{j = 2}^{\infty} c^j |(\mathcal{F}^{(n)})_j| \leq 9 \sum_{j = 2}^{\infty} \sum_{i = (j - 2)n + 2}^{jn} i |\mathcal{F}_i| e^{-i\beta}
\leq 18 \sum_{i = 2}^{\infty} i |\mathcal{F}_i| e^{-i\beta},
\]
which is less than $1/2$ by assumption. We have then demonstrated (\ref{millerineq}), and so $h(X^n) = nh(X) > \log(|\mathcal{L}_n(X)|/2)$, implying $|\mathcal{L}_n(X)| < 2e^{nh(X)}$
for sufficiently large $n \in S$. Denote by $S'$ the cofinite subset of $S$ for which this inequality holds. 

Since the lower density of $S$ is greater than $\frac{1}{2}$, the same is true of $S'$, meaning that $S' + S'$ is cofinite. Therefore, for all sufficiently large $n$, we can write $n = s_1 + s_2$ for $s_1, s_2 \in S'$, and then $|\mathcal{L}_n(X)| \leq |\mathcal{L}_{s_1}(X)| |\mathcal{L}_{s_2}(X)| < 
2e^{s_1h(X)} 2e^{s_2h(X)} = 4e^{nh(X)}$.

\end{proof}

\section{Defining $G$ and $G'$ with concatenability properties}\label{defg}

Our $G$ and $G'$ will be defined as words which do not begin or end with subwords of words in $\mathcal{F}$ which are too long.

\begin{definition}\label{heavy}
A \textbf{heavy subword} is a subword of some $w \in \mathcal{F}$ whose length is at least $|w|/3$. Denote by $\mathcal{H}$ the set of heavy subwords.
\end{definition}

\begin{definition}
For any subshift $X$ with forbidden list $\mathcal{F}$, define $P$ to be the set of all words in $\mathcal{L}(X)$ which do not have any heavy subword as a prefix, define $S$ to be the set of all words in $\mathcal{L}(X)$ which do not have any heavy subword as a suffix, and define $G = P \cap S$. 
\end{definition}

It is immediate that the concatenation of two words in $G$ cannot contain a forbidden word (such a forbidden word would contain a heavy subword in one of the concatenated words!). To prove that such a concatenation is actually in $\mathcal{L}(X)$, we will need a version of Miller's proof of Theorem~\ref{ogmiller} adapted to two-sided subshifts. We first define a two-sided analogue of his weight function.

\begin{definition}\label{2sidedg}
If $\mathcal{F} \subseteq \mathcal{A}^*$ and $c > 0$, for any $w \in \mathcal{A}^*$ we define
\[
g_c(w) := \sum_{v \in \mathcal{F}} \left[ \sum_{\substack{\ell \in \mathcal{A}^*: |\ell| < |v|, \\ \ell w \textrm{ begins with } v}} c^{|\ell|} + 
\sum_{\substack{r \in \mathcal{A}^*: |r| < |v|, \\ w r \textrm{ begins with } v}} c^{|r|} + 
2 \sum_{\substack{\ell,r \in \mathcal{A}^*: |\ell|, |r| > 0, \\ \ell w r = v}} c^{|\ell| + |r|} \right].
\]
\end{definition}

We make the trivial observation that if $w$ begins with, ends with, or is equal to some $v \in \mathcal{F}$, then $g_c(w) \geq 1$.

\begin{theorem}\label{2miller}
If there exists $c > |\mathcal{A}|^{-1}$ so that 
\[
\sum_{n = 1}^{\infty} |\mathcal{F}_n| c^n < \frac{|\mathcal{A}| c - 1}{2}, 
\]
then any $w$ containing no word from $\mathcal{F}$ and satisfying $g_c(w) < 1$ is in $\mathcal{L}(X)$.
\end{theorem}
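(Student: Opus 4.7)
My plan is to extend Miller's one-sided argument (Theorem~\ref{ogmiller}) to the two-sided setting. Given $w$ with no forbidden subword and $g_c(w) < 1$, I would extend $w$ to a biinfinite sequence in $X$ by alternately appending letters on the right and on the left, maintaining the invariants ``no forbidden subword in the current word'' and ``$g_c(\cdot) < 1$''; a compactness argument (K\"onig's lemma applied separately on each side) then places $w$ in $\mathcal{L}(X)$.

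A first observation is that $g_c(\cdot) < 1$ automatically precludes creating a forbidden subword in a one-letter extension: if $ua$ contains some $v \in \mathcal{F}$ not already in $u$, then $v$ must sit as a suffix of $ua$, and this situation contributes a full $c^0 = 1$ to the second summand of $g_c(ua)$ (via the trivial observation stated just after Definition~\ref{2sidedg}), forcing $g_c(ua) \geq 1$. Symmetrically on the left. So the inductive step reduces to: for every $u$ with $g_c(u) < 1$, find letters $a, b \in \mathcal{A}$ with $g_c(ua) < 1$ and $g_c(bu) < 1$.

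The strategy for the right-extension is to prove the averaged bound $\sum_{a \in \mathcal{A}} g_c(ua) < |\mathcal{A}|$, decomposing $g_c$ into its three summands (call them $g_c^L$, $g_c^R$, $g_c^C$). The right-facing part obeys the telescoping identity $\sum_a g_c^R(ua) = c^{-1}(g_c^R(u) + \sum_v c^{|v|})$, exactly as in Miller's one-sided computation: each partial occurrence either transfers with weight scaled by $c^{-1}$ or is newly initiated by the first letter of some $v$. The central part $g_c^C$ redistributes: triples $(v,\ell,r)$ with $|r| \geq 2$ transfer to $g_c^C(ua)$ with weight scaled by $c^{-1}$, while those with $|r| = 1$ migrate into new contributions of $g_c^L(ua)$ corresponding to $v = \ell u a$, picking up an additional factor of $\frac{1}{2}$; this is precisely why the coefficient $2$ is placed in front of $g_c^C$'s sum. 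Finally, $\sum_a g_c^L(ua) = |\mathcal{A}|\, g_c^L(u) + $ (the migrated terms just described, together with fresh terms from $v = ua$), since the existing $g_c^L$-terms are invariant under a right-extension. Assembling these pieces, together with $g_c(u) < 1$ and the hypothesis $\sum_v c^{|v|} < \frac{|\mathcal{A}|c - 1}{2}$, should yield $\sum_a g_c(ua) < |\mathcal{A}|$; a symmetric argument handles the left-extension.

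The main obstacle is precisely this averaged bound: the identity $\sum_a g_c^L(ua) = |\mathcal{A}|\, g_c^L(u) + \ldots$ already consumes nearly the entire $|\mathcal{A}|$ budget, so the coefficient $\frac{1}{2}$ in the hypothesis (versus Miller's $|\mathcal{A}|c - 1$ in the one-sided case) and the factor $2$ in the definition of $g_c^C$ must combine exactly right to absorb the remaining terms --- the fresh contributions where $v = ua$ and the migrated $|r| = 1$ mass from $g_c^C(u)$. Working out this delicate accounting, and verifying that the hypothesis is exactly strong enough to close the budget for every $u$, is the technical heart of the proof.
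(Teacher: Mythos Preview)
Your one-sided averaging bound $\sum_{a \in \mathcal{A}} g_c(ua) < |\mathcal{A}|$ does not hold in general, and this is a genuine gap. The obstacle you identify is fatal: since every left-facing partial occurrence counted in $g_c^L(u)$ survives a right extension, we have $g_c^L(ua) \geq g_c^L(u)$ for all $a$, hence $\sum_a g_c^L(ua) \geq |\mathcal{A}|\, g_c^L(u)$. If $g_c^L(u) = 1 - \epsilon$ with $g_c^R(u) = g_c^C(u) = 0$ (so $g_c(u) < 1$), then
\[
\sum_{a} g_c(ua) \;\geq\; |\mathcal{A}|(1-\epsilon) \;+\; \sum_a g_c^R(ua) \;\geq\; |\mathcal{A}|(1-\epsilon) + c^{-1}\sum_{v \in \mathcal{F}} c^{|v|},
\]
where the last inequality records the freshly initiated right-occurrences (take $a = v_1$ and $r = v_2\cdots v_{|v|}$ for each $v$). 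For $\epsilon$ small and $\mathcal{F}$ nonempty this exceeds $|\mathcal{A}|$. There is no negative term anywhere in your accounting to offset the fresh $c^{-1}\sum_v c^{|v|}$; the factor $2$ in $g_c^C$ and the $\tfrac12$ in the hypothesis do not help here, since $g_c^C(u) = 0$ in this scenario.

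The paper resolves this by extending on both sides \emph{simultaneously}: one proves
\[
\sum_{a,b \in \mathcal{A}} g_c(awb) \;\leq\; 2|\mathcal{A}|\,c^{-1} \sum_{v \in \mathcal{F}} c^{|v|} \;+\; |\mathcal{A}|\,c^{-1}\, g_c(w),
\]
and then $g_c(w) < 1$ together with the hypothesis gives $\sum_{a,b} g_c(awb) < |\mathcal{A}|^2$, so some pair $(a,b)$ satisfies $g_c(awb) < 1$. The key gain is the coefficient $|\mathcal{A}|\,c^{-1}$ (rather than $|\mathcal{A}|$) on $g_c(w)$: summing over the \emph{left} letter $a$ makes $g_c^L$ telescope down by a factor $c^{-1}$ (exactly Miller's one-sided computation), while summing over the right letter $b$ contributes only a factor $|\mathcal{A}|$; symmetrically for $g_c^R$. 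Your decomposition into $g_c^L, g_c^R, g_c^C$ and the redistribution of central mass are essentially the right bookkeeping, and the paper uses the same pieces --- but they must be assembled into the two-sided sum $\sum_{a,b} g_c(awb)$, not into $\sum_a g_c(ua)$ alone.
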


\begin{proof}
Our proof is essentially just a more complicated version of the proof of Theorem~\ref{ogmiller} adapted to the two-sided setting. The fundamental claim we wish to verify is that for every $w$, 
\begin{equation}\label{2millerineq}
\sum_{a, b \in \mathcal{A}} g_c(awb) \leq 2|\mathcal{A}|c^{-1} \sum_{n = 1}^{\infty} |\mathcal{F}_n| c^n + |\mathcal{A}| c^{-1} g_c(w).
\end{equation}
Once this is verified, we would know that whenever $g_c(w) < 1$,
\[
\sum_{a, b \in \mathcal{A}} g_c(awb) < 2|\mathcal{A}|c^{-1} \frac{|\mathcal{A}|c - 1}{2} + |\mathcal{A}| c^{-1} = |\mathcal{A}|^2,
\]
implying that there exist $a, b \in \mathcal{A}$ for which $g_c(awb) < 1$. Continuing in this way yields sequences $(a_n)$ and $(b_n)$ so that
$g_c(a_n \ldots a_1 w b_1 \ldots b_n) < 1$ for all $n$, implying that $a_n \ldots a_1 w b_1 \ldots b_n$ does not begin with, end with, or equal any $v \in \mathcal{F}$ for every $n$. Then, the biinfinite sequence $\ldots a_2 a_1 w b_1 b_2 \ldots$ contains no word in $\mathcal{F}$, and so is in $X$, implying that $w \in \mathcal{L}(X)$. It remains to verify (\ref{2millerineq}).

Fix any $w$, $a$, and $b$. Then by Definition~\ref{2sidedg},
\begin{multline}\label{awb}
\sum_{a,b \in \mathcal{A}} g_c(awb) = \\
\sum_{v \in \mathcal{F}, a, b \in \mathcal{A}} \left[ \sum_{\substack{\ell \in \mathcal{A}^*: |\ell| < |v|, \\ \ell awb\textrm{ begins with } v}} c^{|\ell|} + 
\sum_{\substack{r \in \mathcal{A}^*: |r| < |v|, \\ awb r \textrm{ begins with } v}} c^{|r|} + 
2 \sum_{\substack{\ell,r \in \mathcal{A}^*: |\ell|, |r| > 0, \\ \ell awb r = v}} c^{|\ell| + |r|} \right].
\end{multline}
We will rewrite each of the three terms on the right-hand side of (\ref{awb}).
\begin{multline}\label{part1}\tag{2A}
\sum_{v \in \mathcal{F}, a, b \in \mathcal{A}} \sum_{\substack{\ell \in \mathcal{A}^*: |\ell| < |v|, \\ \ell awb\textrm{ begins with } v}} c^{|\ell|} = 
|\mathcal{A}| \sum_{v \in \mathcal{F}, a \in \mathcal{A}} \sum_{\substack{\ell \in \mathcal{A}^*: \\ \ell a = v}} c^{|\ell|} \\ + 
|\mathcal{A}| \sum_{v \in \mathcal{F}, a \in \mathcal{A}} \sum_{\substack{\ell \in \mathcal{A}^*: |\ell| < |v|, \ell a \neq v, \\ \ell aw\textrm{ begins with } v}} c^{|\ell|} + 
\sum_{v \in \mathcal{F}, a, b \in \mathcal{A}} \sum_{\substack{\ell \in \mathcal{A}^*: \\ \ell a w b = v}} c^{|\ell|}\\
= \underbrace{|\mathcal{A}| c^{-1} \sum_{\ell a \in \mathcal{F}} c^{|\ell a|}}_{(I)} + 
\underbrace{|\mathcal{A}| c^{-1} \sum_{v \in \mathcal{F}, a \in \mathcal{A}} \sum_{\substack{\ell a \in \mathcal{A}^*: |\ell a| < |v|, \\ (\ell a) w\textrm{ begins with } v}} c^{|\ell a|}}_{(II)} + 
\underbrace{c^{-2} \sum_{v \in \mathcal{F}} \sum_{\substack{\ell a\in \mathcal{A}^*, b \in \mathcal{A}: \\ (\ell a) w b = v}} c^{|\ell a| + |b|}}_{(III)}.
\end{multline}

\begin{multline}\label{part2}\tag{2B}
\sum_{v \in \mathcal{F}, a, b \in \mathcal{A}} \sum_{\substack{r \in \mathcal{A}^*: |r| < |v|, \\ awbr\textrm{ ends with } v}} c^{|r|}
= |\mathcal{A}| \sum_{v \in \mathcal{F}, b \in \mathcal{A}} \sum_{\substack{r \in \mathcal{A}^*: \\ br = v}} c^{|r|} \\ + 
|\mathcal{A}| \sum_{v \in \mathcal{F}, b \in \mathcal{A}} \sum_{\substack{\ell \in \mathcal{A}^*: |r| < |v|, br \neq v, a w br \neq v, \\ awbr\textrm{ end with } v}} c^{|r|} + 
\sum_{v \in \mathcal{F}, a, b \in \mathcal{A}} \sum_{\substack{r \in \mathcal{A}^*: \\ a w br = v}} c^{|r|} \\
= \underbrace{|\mathcal{A}| c^{-1} \sum_{b r \in \mathcal{F}} c^{|br|}}_{(IV)} + 
\underbrace{|\mathcal{A}| c^{-1} \sum_{v \in \mathcal{F}, b \in B} \sum_{\substack{rb \in \mathcal{A}^*: |rb| < |v|, \\ w (br) \textrm{ ends with } v}} c^{|rb|}}_{(V)} + 
\underbrace{c^{-2} \sum_{v \in \mathcal{F}} \sum_{\substack{a\in \mathcal{A}, br \in \mathcal{A}^*: \\ a w (br) = v}} c^{|a| + |br|}}_{(VI)}.
\end{multline}

\begin{equation}\label{part3}\tag{2C}
2 \sum_{v \in \mathcal{F}, a, b \in \mathcal{A}} \sum_{\substack{\ell,r \in \mathcal{A}^*: |\ell|, |r| > 0, \\ \ell awb r = v}} c^{|\ell| + |r|} 
= \underbrace{2 c^{-2} \sum_{v \in \mathcal{F}} \sum_{\substack{\ell a, br \in \mathcal{A}^*: |\ell a|, |br| > 1, \\ (\ell a) w (b r) = v}} c^{|\ell a| + |b r|}}_{(VII)}. 
\end{equation}

(In (I), (II), (IV), and (V), the factor of $|\mathcal{A}|$ comes from removing the summation over either $a$ or $b$ in a constant quantity.) 
We now bound the right-hand side of (\ref{2millerineq}) from below. 
\begin{multline*}
2|\mathcal{A}| c^{-1} \sum_{n = 1}^{\infty} |\mathcal{F}_n| c^n + |\mathcal{A}| c^{-1} g_c(w) = 
2|\mathcal{A}| c^{-1} \sum_{n = 1}^{\infty} |\mathcal{F}_n| c^n + 
|\mathcal{A}| c^{-1} \sum_{v \in \mathcal{F}} \sum_{\substack{\ell \in \mathcal{A}^*: |\ell| < |v|, \\ \ell w \textrm{ begins with } v}} c^{|\ell|} + \\
|\mathcal{A}| c^{-1} \sum_{v \in \mathcal{F}} \sum_{\substack{r \in \mathcal{A}^*: |r| < |v|, \\ w r \textrm{ begins with } v}} c^{|r|} + 
2 |\mathcal{A}| c^{-1} \sum_{v \in \mathcal{F}} \sum_{\substack{\ell,r \in \mathcal{A}^*: |\ell|, |r| > 0, \\ \ell w r = v}} c^{|\ell| + |r|} \geq \\
\underbrace{2|\mathcal{A}| c^{-1} \sum_{n = 1}^{\infty} |\mathcal{F}_n| c^n}_{(a)} +
\underbrace{|\mathcal{A}| c^{-1} \sum_{v \in \mathcal{F}} \sum_{\substack{\ell \in \mathcal{A}^*: |\ell| < |v|, \\ \ell w \textrm{ begins with } v}} c^{|\ell|}}_{(b)} + \\
\underbrace{|\mathcal{A}| c^{-1} \sum_{v \in \mathcal{F}} \sum_{\substack{r \in \mathcal{A}^*: |r| < |v|, \\ w r \textrm{ begins with } v}} c^{|r|}}_{(c)} + 
\underbrace{2 c^{-2} \sum_{v \in \mathcal{F}} \sum_{\substack{\ell,r \in \mathcal{A}^*: |\ell|, |r| > 0, \\ \ell w r = v}} c^{|\ell| + |r|}}_{(d)}.
\end{multline*}
(The final inequality uses the fact that $c > |\mathcal{A}|^{-1}$.) We now claim that the portions (I) - (VII) of (\ref{part1}), (\ref{part2}), and (\ref{part3}) are all present in (a) - (d). 

Specifically, $(I) + (IV) = (a)$, $(II) \leq (b)$ and $(V) \leq (c)$,
and $(III) + (VI) + (VII) \leq (d)$ ($(VII)$ corresponds to the case $|\ell|, |r| > 1$ in $(d)$, $(III)$ to $|\ell| \geq 1$ and $|r| = 1$,
and $(VI)$ to $|\ell| = 1$ and $|r| \geq 1$; $|\ell| = |r| = 1$ is counted in both $(III)$ and $(VI)$, but this is taken care of by the 
factor of $2$ in $(d)$). This verifies (\ref{2millerineq}) and completes the proof.

\end{proof}

We are now prepared to prove that for `small' $\mathcal{F}$, $v,w \in G \Longrightarrow vw \in \mathcal{L}(X)$. 

\begin{theorem}\label{concat}
If there exists $c > |\mathcal{A}|^{-1}$ so that $\sum_{n = 1}^{\infty} |\mathcal{F}_n| c^n < \frac{|\mathcal{A}| c - 1}{2}$ and 
$\sum_{n = 1}^{\infty} n |\mathcal{F}_n| c^{n/3} < 1/4$, then for any $v,w \in G$, $vw \in \mathcal{L}(X)$.
\end{theorem}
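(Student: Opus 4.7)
The plan is to apply Theorem~\ref{2miller} to the concatenation $vw$. Since the first hypothesis of the theorem matches that of Theorem~\ref{2miller}, I would need only to verify that $vw$ contains no word from $\mathcal{F}$ and that $g_c(vw) < 1$. The first assertion is routine: any $u \in \mathcal{F}$ that appears as a subword of $vw$ must straddle the boundary between $v$ and $w$ (since $v, w \in \mathcal{L}(X)$ individually contain no forbidden word), so $u = u_L u_R$ with $u_L$ a nonempty suffix of $v$ and $u_R$ a nonempty prefix of $w$; then $\max(|u_L|, |u_R|) \geq |u|/2 \geq |u|/3$, making that piece a heavy subword and contradicting $v \in S$ or $w \in P$.

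The key preparatory observation for bounding $g_c(vw)$ is that membership in $G$ forces $v$ itself (and $w$ itself) to not be a heavy subword: $v$ is its own (trivial) suffix, so if $v$ were a subword of some $u \in \mathcal{F}_n$ with $|v| \geq n/3$, then $v$ would be a heavy suffix of $v$, contradicting $v \in S$. Consequently, whenever $v$ (resp.\ $w$) appears as a subword of any $u \in \mathcal{F}_n$, we automatically have $|v| < n/3$ (resp.\ $|w| < n/3$).

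I would then decompose $g_c(vw) = \sum_{u \in \mathcal{F}}(T_1(u) + T_2(u) + 2T_3(u))$ and split each $T_i(u)$ by alignment type. In the $A1$ subcase of $T_1$ (overlap with $vw$ stays inside $v$), the overlap is a prefix of $v$ of length $m$, and $v \in P$ gives $m < n/3$, so each alignment contributes $c^{n-m}$ with $n - m > 2n/3$. In the $A2$ subcase (overlap extends past $v$ into $w$), $u$'s suffix equals $v \cdot w[1..k]$, so $u$ contains $v$ entirely (forcing $|v| < n/3$ by the key observation) and $w[1..k]$ is a suffix of $u$ (so $w \in P$ gives $k < n/3$); hence the left extension has length $|\ell| = n - |v| - k > n/3$, giving contribution $c^{|\ell|}$ with $|\ell| > n/3$. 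Subcases $B1, B2$ of $T_2$ are symmetric. In $T_3$ ($u$ strictly contains $vw$), $u$ contains both $v$ and $w$, so $|v|+|w| < 2n/3$ and each alignment contributes $c^{n-|v|-|w|}$ with $n - |v| - |w| > n/3$.

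In all four subcases $A1, A2, B1, B2$ there are at most $\lceil n/3 \rceil$ alignments per $u$ (at most $n$ in $T_3$), and in the regime $c \leq 1$ each alignment contributes at most $c^{n/3}$. Bounding the number of relevant $u \in \mathcal{F}_n$ crudely by $|\mathcal{F}_n|$, each of the four sums $\sum_u T_1^{A1}(u)$, $\sum_u T_1^{A2}(u)$, $\sum_u T_2^{B1}(u)$, $\sum_u T_2^{B2}(u)$ is at most $\tfrac{1}{3}\sum_n n|\mathcal{F}_n|c^{n/3} < \tfrac{1}{12}$ by the second hypothesis, while $\sum_u T_3(u) \leq \sum_n n|\mathcal{F}_n|c^{n/3} < \tfrac{1}{4}$. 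Summing, $g_c(vw) < 4 \cdot \tfrac{1}{12} + 2 \cdot \tfrac{1}{4} = \tfrac{5}{6} < 1$, and Theorem~\ref{2miller} completes the proof. The main obstacle is identifying the preparatory observation in paragraph two; without it, the boundary-straddling cases $A2, B2, C$ produce exponents like $n - |v| - k$ or $n - |v| - |w|$ that cannot be controlled by the hypothesis when $|v|$ or $|w|$ is comparable to $n$.
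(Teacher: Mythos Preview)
Your proposal is correct and follows essentially the same approach as the paper: verify that $vw$ contains no forbidden word, then bound $g_c(vw)<1$ and invoke Theorem~\ref{2miller}. The paper compresses your case analysis into the single observation that every relevant overlap between $vw$ and a word $t\in\mathcal{F}$ has length at most $2|t|/3$, obtaining the cruder estimate $g_c(vw)\le 4\sum_n n|\mathcal{F}_n|c^{n/3}<1$; your explicit identification of the ``key preparatory observation'' (that $v$ and $w$ themselves cannot be heavy subwords) is exactly what makes the boundary-straddling cases work, and the paper uses it implicitly without stating it.
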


\begin{proof}
This is essentially just an application of Theorem~\ref{2miller}. Assume that $v, w \in G$. Since $v$ and $w$ do not begin
or end with a heavy subword, any subword of a word $u \in \mathcal{F}$ contained in $vw$ has length at most $2|u|/3$. Therefore, for any $c$,
\begin{multline}\label{weightbd}
g_c(vw) = \sum_{t \in \mathcal{F}} \left[ \sum_{\substack{\ell \in \mathcal{A}^*: |\ell| < |t|, \\ \ell vw \textrm{ begins with } t}} c^{|\ell|} + 
\sum_{\substack{r \in \mathcal{A}^*: |r| < |t|, \\ vw r \textrm{ begins with } t}} c^{|r|} + 
2 \sum_{\substack{\ell,r \in \mathcal{A}^*: |\ell|, |r| > 0, \\ \ell vw r = t}} c^{|\ell| + |r|} \right] \\
\leq \sum_{t \in \mathcal{F}} 4tc^{t/3} = 4 \sum_{n = 1}^{\infty} n |\mathcal{F}_n| c^{n/3}.
\end{multline}
Therefore, $g_c(vw) < 1 = (|\mathcal{A}| c - 1)/2$ by assumption, and by Theorem~\ref{2miller}, $vw \in \mathcal{L}(X)$.

\end{proof}

We can now similarly define $G'$.

\begin{definition}\label{heavy'}
A \textbf{heavy$'$ subword} is a subword of some $w \in \mathcal{F}$ whose length is at least $|w|/4$. Denote by $\mathcal{H}'$ the set of heavy subwords.
\end{definition}

\begin{definition}
For any subshift $X$ with forbidden list $\mathcal{F}$, define $P'$ to be the set of all words in $\mathcal{L}(X)$ which do not have any heavy$'$ subword as a prefix, define $S'$ to be the set of all words in $\mathcal{L}(X)$ which do not have any heavy$'$ subword as a suffix, and define $G' = P' \cap S'$.
\end{definition}

We omit the proof of the following, since it is analogous to the proof of Theorem~\ref{concat}.

\begin{theorem}\label{concat'}
If there exists $c > |\mathcal{A}|^{-1}$ so that $\sum_{n = 1}^{\infty} |\mathcal{F}_n| c^n < \frac{|\mathcal{A}| c - 1}{2}$ and 
$\sum_{n = 1}^{\infty} n |\mathcal{F}_n| c^{n/4} < 1/4$, then for any $u,v,w \in G$, $uvw \in \mathcal{L}(X)$.
\end{theorem}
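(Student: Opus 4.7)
The plan is to follow the proof of Theorem~\ref{concat} essentially verbatim, with the threshold $1/3$ replaced by $1/4$ and an extra middle word $v \in G'$ inserted between $u$ and $w$. (I read the statement as intending $u,v,w \in G'$; the $G$ in the statement looks like a typo.) The argument again breaks into two observations followed by an appeal to Theorem~\ref{2miller}.

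First, I would check that $uvw$ itself contains no word of $\mathcal{F}$. Since $u,v,w \in \mathcal{L}(X)$, any $t \in \mathcal{F}$ appearing as a subword of $uvw$ must span at least one of the two boundaries. If $t$ spans exactly one boundary, then $t$ decomposes as a suffix of one of the three words followed by a prefix of the adjacent word; both pieces are subwords of $t$, and the $G'$ condition forces each to have length $<|t|/4$, so $|t|<|t|/2$. If $t$ spans both boundaries, then $t$ decomposes as a suffix of $u$, all of $v$, and a prefix of $w$. Here the middle piece is $v$ itself; since $v$ is a prefix of itself, were $v$ a subword of $t$ of length $\geq |t|/4$, that would be a heavy$'$ prefix of $v$, contradicting $v \in G'$. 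All three pieces thus have length $<|t|/4$, and so $|t|<3|t|/4$, again a contradiction.

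Second, I would bound $g_c(uvw)$ in the same style as inequality~(\ref{weightbd}). For each fixed $t \in \mathcal{F}$, the three contributions correspond respectively to $\ell uvw$ beginning with $t$, $uvwr$ ending with $t$, and $\ell uvw r = t$. In every case, the portion of $uvw$ lying inside $t$ consists of (at most) a suffix of $u$, all of $v$, and/or a prefix of $w$; by the same reasoning used in step 1, each such constituent segment is a subword of $t$ and is therefore forced by $G'$ to have length $<|t|/4$. The total length of $uvw$ inside $t$ is thus strictly less than $3|t|/4$, which yields $|\ell|>|t|/4$ in cases 1 and 2 and $|\ell|+|r|>|t|/4$ in case 3. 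Since there are at most $|t|$ valid $\ell$-values (respectively pairs $(\ell,r)$) in each case, and each summand is bounded by $c^{|t|/4}$, the per-$t$ contribution is at most $4|t|c^{|t|/4}$. Summing over $t \in \mathcal{F}$ and invoking the hypothesis $\sum_n n|\mathcal{F}_n| c^{n/4} < 1/4$ gives $g_c(uvw)<1$.

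Finally, the other hypothesis $\sum_n |\mathcal{F}_n| c^n < (|\mathcal{A}|c-1)/2$ is exactly what Theorem~\ref{2miller} requires, so applying it to the word $uvw$ (which contains no forbidden word and satisfies $g_c(uvw)<1$) yields $uvw \in \mathcal{L}(X)$. The main subtlety is the case analysis in step 2: for each of the three contribution types one must track whether the prefix/suffix of $uvw$ that hits $t$ lies inside $u$ alone, inside $uv$, or inside $uvw$, and show that in every subcase the relevant weight exponent is at least $|t|/4$. The one observation genuinely new relative to Theorem~\ref{concat} is that whenever $v$ sits entirely inside $t$, treating $v$ as a prefix of itself forces $|v|<|t|/4$.
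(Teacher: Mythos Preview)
Your proposal is correct and matches the paper's intended approach exactly: the paper omits the proof, saying only that it is analogous to that of Theorem~\ref{concat}, and your outline carries out precisely that analogy (replace $1/3$ by $1/4$, insert a third word, bound $g_c(uvw)$ by $4\sum_n n|\mathcal{F}_n|c^{n/4}<1$, and apply Theorem~\ref{2miller}). Your observation that the statement should read $u,v,w\in G'$ rather than $G$ is also correct.
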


\section{Bounding $\mu(G_n)$ and/or $\mu(G'_n)$ away from $0$}\label{bounding}

The final hypothesis we must prove is that the sets $G_n$ and/or $G'_n$ have measures bounded away from $0$ along a syndetic set for any ergodic MME; we begin by working with $G_n$ via $P_n$ and $S_n$. Our proof will use some results from \cite{coded} about a class of subshifts called coded subshifts, which we now define.

\begin{definition}
Given a set $C \subseteq \mathcal{A}^*$ of \textbf{code words}, the \textbf{coded subshift} induced by $C$ is the closure of the set of shifts of biinfinite concatenations of words from $C$.
\end{definition}

Recall that $\mathcal{H}$ denotes the set of heavy subwords. 

\begin{theorem}\label{step1bd}
If there exists $\alpha < h(X)$ so that $\sum_{n = 1}^{\infty} |\mathcal{H}_n| e^{-n\alpha} < 1 - e^{-\alpha}$
, then for every ergodic MME $\mu$ on $X$, $\mu(P_n)$ and $\mu(S_n)$ are bounded away from $0$.
\end{theorem}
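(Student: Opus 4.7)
My approach is to argue by contradiction using the coded subshift $Y$ with code $\mathcal{H}$; I will treat $P_n$, since $S_n$ is handled by the analogous time-reversed argument. The sets $\{x : x_1\cdots x_n \in P_n\} = \bigcap_{k=1}^n\{x : x_1\cdots x_k \notin \mathcal{H}\}$ are decreasing in $n$, so ``$\mu(P_n)$ bounded away from $0$'' is equivalent to $\mu(P_\infty) > 0$, where $P_\infty$ is the set of $x \in X$ with no heavy prefix of any length. Assume for a contradiction that $\mu(P_\infty) = 0$; by $\sigma$-invariance of $\mu$ this forces $\mu(\sigma^{-i} P_\infty) = 0$ for every $i \in \mathbb{Z}$, and so $\mu$-a.e.\ $x$ has the property that at every integer position $i$ there is some $k \geq 1$ with $x_{i+1} \cdots x_{i+k} \in \mathcal{H}$.

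For any such $x$ I claim $x \in Y$. Given any finite subword $x_a \cdots x_b$ of $x$, greedy parsing from position $a$ (by the hypothesis on $x$, at each step there is a heavy subword starting where the previous one ended, so pick the shortest) produces a finite concatenation $h_1 h_2 \cdots h_r$ of elements of $\mathcal{H}$ equal to $x_a \cdots x_{a + |h_1| + \cdots + |h_r| - 1}$, with total length at least $b - a + 1$; so $x_a \cdots x_b$ is a prefix of this concatenation. Padding arbitrarily on both sides by a fixed element of $\mathcal{H}$ exhibits $x_a \cdots x_b$ as a subword of a biinfinite concatenation, which lies in $Y$, so $x_a \cdots x_b \in \mathcal{L}(Y)$; since this holds for every finite subword of $x$, $x \in Y$. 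Consequently $\mu(Y) = 1$, and the variational principle gives $h(\mu) \leq h(Y)$.

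To complete the contradiction I would invoke the entropy estimate for coded subshifts from \cite{coded}: the hypothesis $\sum_n |\mathcal{H}_n| e^{-n\alpha} < 1 - e^{-\alpha}$ controls the generating function $F(z) = \sum_n |\mathcal{H}_n| z^n$ at $z = e^{-\alpha}$ strongly enough to give $h(Y) \leq \alpha$. Combined with $\alpha < h(X) = h(\mu)$ this contradicts $h(\mu) \leq h(Y)$ and forces $\mu(P_\infty) > 0$. The argument for $\mu(S_n)$ is identical after replacing $\sigma$ by $\sigma^{-1}$ and heavy prefixes by heavy suffixes.

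The main obstacle is the final coded-subshift entropy bound. A naive count of length-$n$ words in $Y$ suffers from two kinds of overcounting (non-unique parsings, and length-$n$ subwords contained inside a single long heavy word), and I suspect the particular form $1 - e^{-\alpha}$ of the hypothesis, rather than just $< 1$, reflects a boundary term from suffixes of code words that straddle the length-$n$ window in the precise version of the bound from \cite{coded}. Locating the exact statement in that reference and checking that our hypothesis implies its conclusion is where the delicate work lies.
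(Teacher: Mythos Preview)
Your reduction to a coded subshift is on the right track, and the observation that $\mu(P_\infty)=0$ forces $\mu$-a.e.\ $x$ to lie in the coded subshift $Y$ generated by $\mathcal{H}$ is correct. The gap is exactly where you suspect it is, but not for the reason you guess. The entropy bound from \cite{coded} (Theorem~1.7 there) reads: if $\alpha>h(L)$ and $f_C(\alpha)<1$, then $h(Y)\le\alpha$, where $L$ is the \emph{limit subshift} of biinfinite sequences arising as limits of individual code words of increasing length. With the full infinite code $\mathcal{H}$, nothing in the hypothesis controls $h(L)$; indeed, since $\sum_n|\mathcal{H}_n|e^{-n\alpha}$ is only assumed small, single long heavy words could still contain a rich set of subwords, and an $n$-word of $Y$ can lie entirely inside one long code word. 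So the inequality $h(Y)\le\alpha$ is not available from the generating-function condition alone.

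The paper sidesteps this by a different construction: it never tries to put $x$ in a coded shift over all of $\mathcal{H}$. Instead, fixing $\epsilon>0$ and choosing $N$ with $\mu(\overline{P}_N)<\epsilon$, it parses typical words (those in a set $W_n$ with $\mu(W_n)\to1$, by Birkhoff) using only the \emph{finite} code $C_N=\{*\}\cup\bigcup_{i\le N}\mathcal{H}_i$, inserting a $*$ at any position where no heavy word of length $\le N$ begins. Now the limit subshift is empty (finite code), so Theorem~1.7 applies cleanly, and the generating function is $e^{-\alpha}+\sum_{n\le N}|\mathcal{H}_n|e^{-n\alpha}$; this is the source of the term $1-e^{-\alpha}$ in the hypothesis, not a boundary effect from suffixes. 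The Shannon--McMillan--Breiman theorem combined with the bound on $*$-density gives $h(X)\le 2\epsilon\log|\mathcal{A}|+h(X_N)\le 2\epsilon\log|\mathcal{A}|+\alpha$, and $\epsilon\to0$ finishes. So the key missing idea in your approach is the truncation to a finite code plus the wildcard symbol, which simultaneously kills the limit-subshift obstruction and explains the exact shape of the hypothesis.
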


\begin{proof}

Formally, define $\overline{P}_n = \{x \ : \ x([0, n)) \in P_n\}$; then $\mu(\overline{P}_n) = \mu(P_n)$, and $\overline{P}_1 \supset \overline{P}_2 \supset \ldots$. Therefore, we need only prove that $\mu(P_n) \nrightarrow 0$. For a contradiction, assume that $\mu(P_n) \rightarrow 0$. Choose any $\epsilon > 0$, and take $N$ s.t. $\mu(\overline{P}_N) < \epsilon$. For every $n$, define
\[
W_n := \{w \in \mathcal{L}_n(X) : |\{0 \leq i < n - N \ : \ w([i, i+N)) \in P_N\}| < 2n \epsilon\}.
\]
By Birkhoff's ergodic theorem (applied to $f = \chi_{\overline{P}_N}$), $\mu(W_n) \rightarrow 1$, and so by the Shannon-McMillan-Brieman theorem,
\begin{equation}\label{SMB}
\liminf_n \frac{\log |W_n|}{\log n} \rightarrow h(\mu) = h(X).
\end{equation} 
We will now give a procedure to bound $|W_n|$ from above. Choose any $w \in W_n$, begin at $i = 0$, and iterate the following procedure:
if $w([i, i+N)) \notin P_N$, then there exists $k \leq N$ so that $w([i, i + k))$ is a heavy subword; increment $i$ to $i + k$. 
If $w([i, i+N)) \in P_N$, then change $w(i)$ to a $*$ symbol and increment $i$ to $i+1$. When $i$ first becomes at least $n - N$,
replace $w(i)$, $\ldots$, $w(n - 1)$ with $*$ symbols, and end. Denote the resulting word by $c(w)$.

We note that $c(w)$ is a concatenation of heavy subwords of lengths less than $N$ and $*$s, and so $c(w)$ is contained in the language of the coded subshift with code word set $C_N = \{*\} \cup \bigcup_{i = 1}^N \mathcal{H}_i$; denote this coded subshift by $X_N$. We also note that there are fewer than $2n \epsilon + N$ $*$ symbols in $c(w)$ by definition of $W_n$. Finally, we note that every $w$ is determined entirely by $c(w)$ and the letters of $w$ at locations of $*$ symbols in $c(w)$, and so
\[
|W_n| \leq |\mathcal{A}|^{2n\epsilon + N} |\mathcal{L}_n(X_N)|.
\]
Taking logarithms, dividing by $n$, letting $n \rightarrow \infty$, and applying (\ref{SMB}) yields
\begin{equation}\label{wordbd}
h(X) \leq 2\epsilon \log |\mathcal{A}| + h(X_N).
\end{equation}

We will now use a result from \cite{coded}, which bounds the entropy of a coded subshift defined by code word set $C$ in terms of a generating function
\[
f_C(\alpha) := \sum_{w \in C} e^{-|w|\alpha}
\]
and the so-called limit subshift of biinfinite sequences obtained as limits of individual code words of increasing lengths. Since 
$C_N$ is finite, the corresponding limit subshift $L_N$ is empty. Theorem 1.7 of 
\cite{coded} states that for any $\alpha > h(L_N)$, if $f_{C_N}(\alpha) < 1$, then $h(C_N) \leq \alpha$. By definition, for the
$\alpha$ from the theorem,
\[
f_{C_N}(\alpha) = e^{-\alpha} + \sum_{n = 1}^{N} |\mathcal{H}_n| e^{-n\alpha} \leq e^{-\alpha} + \sum_{n = 1}^{\infty} |\mathcal{H}_n| e^{-n\alpha},
\]
which is less than $1$ by assumption. Therefore, $h(X_N) \leq \alpha$, and so by (\ref{wordbd}), $h(X) \leq 2\epsilon \log |\mathcal{A}| + \alpha$. Since
$\epsilon$ was arbitrary, $h(X) \leq \alpha$. This is a contradiction, implying that our original assumption was wrong and $\mu(P_n)$ is bounded away from $0$. 

The proof that $\mu(S_n)$ is bounded away from $0$ is nearly identical, using sets $\overline{S}_n = \{x \ : \ x((-n, 0]) \in S_n\}$ and representing 
words in an analogous version of $W_n$ as concatenations by starting from the right side and moving to the left.



\end{proof}

\begin{theorem}\label{Gmeasbd}
If there exists $\alpha < h(X)$ so that $\sum_{n = 1}^{\infty} |\mathcal{H}_n| e^{-n\alpha} < 1 - e^{-\alpha}$, then for every ergodic MME $\mu$ on $X$, there exist $\epsilon > 0$ and a syndetic set $S$ so that $\mu(G_n) > \epsilon$ for $n \in S$.
\end{theorem}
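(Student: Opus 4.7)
The plan is to combine the positive-measure output of Theorem~\ref{step1bd} with Khintchine's classical recurrence theorem. Define $A := \bigcap_n \overline{P}_n$ and $B := \bigcap_n \overline{S}_n$; both sequences $\overline{P}_n$ and $\overline{S}_n$ are decreasing in $n$, so Theorem~\ref{step1bd} together with monotone convergence of measure yields $\mu(A), \mu(B) > 0$. Unwinding the definitions (and keeping track of positions) gives $\overline{G}_n := \{x : x([0,n)) \in G_n\} = \overline{P}_n \cap \sigma^{-(n-1)} \overline{S}_n \supseteq A \cap \sigma^{-(n-1)} B$, so it suffices to produce $\epsilon > 0$ and a syndetic $S \subseteq \mathbb{N}$ with $\mu(A \cap \sigma^{-(m-1)} B) > \epsilon$ for all $m \in S$.

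The main subtlety is that Khintchine's theorem produces syndetic self-recurrence $\mu(A' \cap \sigma^{-n} A') > \mu(A')^2 - \delta$ only for a single set $A'$ of positive measure, while we care about the cross-correlation $\mu(A \cap \sigma^{-n} B)$. I would bridge this with a one-shot alignment. By ergodicity of $\mu$ and the mean ergodic theorem, $\frac{1}{N} \sum_{k=0}^{N-1} \mu(A \cap \sigma^{-k} B) \to \mu(A)\mu(B) > 0$, so there must exist some $n_0 \geq 0$ with $\mu(A \cap \sigma^{-n_0} B) > 0$. Setting $C := A \cap \sigma^{-n_0} B$ and applying Khintchine's theorem to the single positive-measure set $C$ (with $\delta = \mu(C)^2/2$) yields a syndetic $S_0 \subseteq \mathbb{N}$ on which $\mu(C \cap \sigma^{-n} C) > \mu(C)^2/2$.

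Finally, I would observe that $C \cap \sigma^{-n} C \subseteq A \cap \sigma^{-(n+n_0)} B$, since $x \in C$ forces $x \in A$ and $\sigma^n x \in C$ forces $\sigma^{n+n_0} x \in B$. Therefore, for every $n \in S_0$,
\[
\mu(\overline{G}_{n+n_0+1}) \;\geq\; \mu\bigl(A \cap \sigma^{-(n+n_0)} B\bigr) \;\geq\; \mu(C \cap \sigma^{-n} C) \;>\; \mu(C)^2/2,
\]
and the shifted set $S := \{n + n_0 + 1 : n \in S_0\}$ is still syndetic, which gives the desired syndetic lower bound on $\mu(G_m)$ with $\epsilon = \mu(C)^2/2$. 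The only real obstacle is the mismatch between Khintchine's theorem (a self-correlation statement) and the cross-correlation we actually need; the fix is to absorb a single positive alignment $n_0$ into the definition of $C$ so that standard syndetic-recurrence machinery applies.
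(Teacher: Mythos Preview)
Your argument is correct. Both your proof and the paper's start by invoking Theorem~\ref{step1bd} to obtain $\mu(A),\mu(B)>0$ for $A=\bigcap_n\overline{P}_n$, $B=\bigcap_n\overline{S}_n$, and both reduce the problem to showing that $\mu(A\cap\sigma^{-m}B)$ is bounded away from zero along a syndetic set of $m$. From there the routes diverge.

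The paper avoids Khintchine entirely: it uses ergodicity only to find a single $N$ with $\mu\bigl(\bigcup_{i=0}^{N-1}\sigma^i\overline{P}\bigr),\mu\bigl(\bigcup_{i=0}^{N-1}\sigma^i\overline{S}\bigr)>3/4$, and then for each $n$ a straightforward union bound plus pigeonhole over the $N^2$ pairs $(i,j)$ yields some $\mu(\sigma^{-i}\overline{P}\cap\sigma^{-n+j}\overline{S})>1/(2N^2)$, hence $\mu(G_{n-i-j})>1/(2N^2)$ with $i+j<2N$. This is completely elementary and gives an explicit constant tied to $N$. Your approach instead converts the cross-correlation problem into a self-correlation one by absorbing a single alignment $n_0$ (found via the ergodic theorem) into the set $C=A\cap\sigma^{-n_0}B$, and then invokes Khintchine's recurrence theorem to get syndetic self-returns of $C$. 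This is a bit less elementary but more conceptual, and the ``alignment then Khintchine'' trick is a nice general device for passing from positive cross-correlations to syndetic ones. A minor remark: your appeal to the mean ergodic theorem is heavier than needed; ergodicity alone gives that $\bigcup_{k\ge 0}\sigma^{-k}B$ has full measure, so some $\mu(A\cap\sigma^{-k}B)>0$ immediately. Also, depending on which of the paper's two conventions for $\overline{S}_n$ you adopt, the shift should be $\sigma^{-(n-1)}$ or $\sigma^{-n}$, but this is an inessential off-by-one.
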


\begin{proof}
Choose any ergodic MME $\mu$ on $X$, and as earlier, define $\overline{P}_n = \{x \ : \ x([0, n)) \in P_n\}$, $\overline{P} = \bigcap_{n = 1}^{\infty} \overline{P}_n$, and similarly define $\overline{S}_n = \{x \ : x([-n,0)) \in S_n\}$ and $\overline{S} = \bigcap_{n = 1}^{\infty} S_n$. By Theorem~\ref{step1bd}, there exists $\epsilon > 0$ so that $\mu(\overline{P}), \mu(\overline{S}) > \epsilon$. By ergodicity, $\mu \left( \bigcup_{i = -\infty}^{\infty} \sigma^i \overline{P} \right), \mu \left( \bigcup_{i = -\infty}^{\infty} \sigma^i \overline{S} \right) = 1$, and so there exists $N$ so that $\mu( \bigcup_{i = 0}^{N-1} \sigma^i \overline{P} ), \mu ( \bigcup_{i = 0}^{N-1} \sigma^i \overline{S} ) > 3/4$ (recall that $\mu$ is $\sigma$-invariant). 

Now, fix any $n$. Clearly
\[
\mu \left(\bigcup_{i = 0}^{N-1} \sigma^{-i} \overline{P} \cap \bigcup_{j = 0}^{N-1} \sigma^{-n+j} \overline{S}\right) > 1/2,
\]
and so there exist $0 \leq i,j < N$ so that $\mu(\sigma^{-i} \overline{P} \cap \sigma^{-n+j} \overline{S}) > \frac{1}{2N^2}$. Finally, we note that every $x$ in this intersection
has $x([i, n-j)) \in G_{n - j - i}$. Therefore, $\mu(G_{n - j - i}) > \frac{1}{2N^2}$. Since $n$ was arbitrary and $i + j < 2N$, we are finished by taking $\epsilon = \frac{1}{2N^2}$.

\end{proof}

The proof of the following result about $G'$ is extremely similar and so omitted.

\begin{theorem}\label{G'measbd}
If there exists $\alpha < h(X)$ so that $\sum_{n = 1}^{\infty} |\mathcal{H}'_n| e^{-n\alpha} < 1 - e^{-\alpha}$, then for every ergodic MME $\mu$ on $X$, there exist $\epsilon > 0$ and a syndetic set $S$ so that $\mu(G'_n) > \epsilon$ for $n \in S$.
\end{theorem}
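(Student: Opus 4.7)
The plan is to mirror, verbatim up to trivial substitutions, the two-step architecture used to prove Theorem~\ref{Gmeasbd}. First I would prove the $G'$-analogue of Theorem~\ref{step1bd}: under the hypothesis $\sum_{n=1}^\infty |\mathcal{H}'_n| e^{-n\alpha} < 1 - e^{-\alpha}$ for some $\alpha < h(X)$, the quantities $\mu(P'_n)$ and $\mu(S'_n)$ are bounded away from $0$ for any ergodic MME $\mu$. Then I would feed this into the same ergodicity-plus-pigeonhole argument as in Theorem~\ref{Gmeasbd} to produce $\epsilon > 0$ and a syndetic $S$ with $\mu(G'_n) > \epsilon$ for $n \in S$.

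For the first step, set $\overline{P'}_n = \{x : x([0,n)) \in P'_n\}$ and assume for contradiction that $\mu(\overline{P'}_n) \to 0$. Given $\epsilon > 0$, choose $N$ with $\mu(\overline{P'}_N) < \epsilon$ and, applying Birkhoff to $f = \chi_{\overline{P'}_N}$ together with Shannon-McMillan-Breiman, define
\[
W_n := \{w \in \mathcal{L}_n(X) : |\{0 \le i < n-N : w([i,i+N)) \in P'_N\}| < 2n\epsilon\}
\]
and obtain $\liminf \tfrac{1}{n}\log|W_n| = h(X)$. The same greedy procedure as in Theorem~\ref{step1bd} (at each position, either consume a heavy$'$ prefix of length at most $N$ or mark the current letter as $*$) maps each $w \in W_n$ to a word $c(w)$ in the language of the coded subshift $X_N$ with code-word set $C_N = \{*\} \cup \bigcup_{i=1}^N \mathcal{H}'_i$, and $c(w)$ contains at most $2n\epsilon + N$ $*$-symbols. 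Hence $|W_n| \le |\mathcal{A}|^{2n\epsilon + N}|\mathcal{L}_n(X_N)|$. Since $C_N$ is finite, its limit subshift is empty, so Theorem~1.7 of \cite{coded} applies: the generating function satisfies $f_{C_N}(\alpha) \le e^{-\alpha} + \sum_{n=1}^\infty |\mathcal{H}'_n|e^{-n\alpha} < 1$ by hypothesis, giving $h(X_N) \le \alpha$. Taking logarithms and letting $n \to \infty$ yields $h(X) \le 2\epsilon \log|\mathcal{A}| + \alpha$, and letting $\epsilon \to 0$ contradicts $\alpha < h(X)$. The bound on $\mu(S'_n)$ follows identically by running the greedy procedure from right to left.

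For the second step, set $\overline{P'} = \bigcap_n \overline{P'}_n$ and $\overline{S'} = \bigcap_n \overline{S'}_n$; by the first step both have $\mu$-measure at least some $\epsilon > 0$. Ergodicity of $\mu$ gives $\mu\bigl(\bigcup_{i\in\mathbb{Z}} \sigma^i \overline{P'}\bigr) = 1$ and similarly for $\overline{S'}$, so I can fix $N$ with $\mu\bigl(\bigcup_{i=0}^{N-1} \sigma^i \overline{P'}\bigr)$ and $\mu\bigl(\bigcup_{i=0}^{N-1} \sigma^i \overline{S'}\bigr)$ both exceeding $3/4$. For each $n$, pigeonholing over pairs $0 \le i,j < N$ produces $i,j$ with $\mu(\sigma^{-i}\overline{P'} \cap \sigma^{-n+j}\overline{S'}) > 1/(2N^2)$, and every point in this intersection satisfies $x([i,n-j)) \in G'_{n-j-i}$. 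Hence $\mu(G'_{n-j-i}) > 1/(2N^2)$; as $n$ ranges over $\mathbb{N}$ and $i+j < 2N$, the set of lengths where $\mu(G'_m) > 1/(2N^2)$ is syndetic with gaps bounded by $2N$. There is no real obstacle here: replacing $\mathcal{H}$ by $\mathcal{H}'$ (and hence the threshold $1/3$ by $1/4$) is transparent to every structural step — it only changes which words appear in the code set $C_N$, and the strengthened hypothesis absorbs this change into the same generating-function bound.
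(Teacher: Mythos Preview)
Your proposal is correct and is exactly what the paper intends: the paper omits the proof of Theorem~\ref{G'measbd} as ``extremely similar'' to that of Theorem~\ref{Gmeasbd}, and you have faithfully reproduced that argument with $\mathcal{H}'$, $P'$, $S'$, $G'$ in place of $\mathcal{H}$, $P$, $S$, $G$. The only ingredient that requires the primed objects is the code-word set $C_N$, and as you note the hypothesis on $\sum |\mathcal{H}'_n| e^{-n\alpha}$ handles this directly.
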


Finally, we will want versions using $|\mathcal{F}_n|$ in the hypothesis (rather than $|\mathcal{H}_n|$ or $|\mathcal{H}'_n|$) for consistency with our other results. We again begin with $G$.

\begin{theorem}\label{Gmeascor}
If there exists $\alpha < h(X)$ so that $\sum_{n = 1}^{\infty} n^2 |\mathcal{F}_n| e^{-(n/3)\alpha} < 1 - e^{-\alpha}$, then for every ergodic MME $\mu$ on $X$, there exist $\epsilon > 0$ and a syndetic set $S$ so that $\mu(G_n) > \epsilon$ for all $n \in S$.
\end{theorem}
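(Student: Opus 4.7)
The plan is to reduce Theorem~\ref{Gmeascor} directly to Theorem~\ref{Gmeasbd} by showing that the given hypothesis on $|\mathcal{F}_n|$ implies the hypothesis $\sum_{n=1}^{\infty} |\mathcal{H}_n| e^{-n\alpha} < 1 - e^{-\alpha}$ needed there. The key observation is just that a heavy subword has length comparable to the forbidden word it came from, so $|\mathcal{H}_n|$ can be bounded by a short sum of $|\mathcal{F}_m|$'s.

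First, I would use Definition~\ref{heavy} to bound $|\mathcal{H}_n|$ above. If $v \in \mathcal{H}_n$, then $v$ is a subword of some $w \in \mathcal{F}$ with $n = |v| \geq |w|/3$, which together with $|w| \geq |v|$ forces $n \leq |w| \leq 3n$. Any fixed $w \in \mathcal{F}_m$ contains at most $m - n + 1 \leq m$ distinct subwords of length $n$, so
\[
|\mathcal{H}_n| \;\leq\; \sum_{m=n}^{3n} m\, |\mathcal{F}_m|.
\]

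Next I would multiply by $e^{-n\alpha}$, sum over $n$, and swap the order of summation. For fixed $m$, the constraint $n \leq m \leq 3n$ is equivalent to $\lceil m/3 \rceil \leq n \leq m$, giving at most $m$ values of $n$, each of which satisfies $e^{-n\alpha} \leq e^{-(m/3)\alpha}$. Therefore
\[
\sum_{n=1}^{\infty} |\mathcal{H}_n| e^{-n\alpha} \;\leq\; \sum_{m=1}^{\infty} m |\mathcal{F}_m| \sum_{n=\lceil m/3 \rceil}^{m} e^{-n\alpha} \;\leq\; \sum_{m=1}^{\infty} m^2 |\mathcal{F}_m| e^{-(m/3)\alpha},
\]
which is strictly less than $1 - e^{-\alpha}$ by hypothesis. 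Theorem~\ref{Gmeasbd} then delivers the desired $\epsilon > 0$ and syndetic $S$ with $\mu(G_n) > \epsilon$ for $n \in S$.

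There is no real obstacle here: the $n^2$ factor and the $e^{-(n/3)\alpha}$ exponent in the statement are precisely tuned to absorb the "at most $m$ subwords of length $n$" combinatorial factor and the worst-case shortest-heavy-subword exponent, respectively. The only thing to be a little careful about is the index swap and making sure the upper and lower endpoints of $m$ for each fixed $n$ translate correctly into endpoints of $n$ for each fixed $m$.
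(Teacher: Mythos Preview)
Your proposal is correct and follows essentially the same approach as the paper: bound $|\mathcal{H}_n|$ by a short sum of $|\mathcal{F}_m|$ over $n \leq m \leq 3n$, interchange the order of summation, crudely estimate the inner sum by $m \cdot e^{-(m/3)\alpha}$, and invoke Theorem~\ref{Gmeasbd}. The only cosmetic difference is that the paper uses the slightly sharper count $m-n+1$ for subwords of length $n$ in a word of length $m$, whereas you use $m$; both lead to the same final bound $\sum_m m^2 |\mathcal{F}_m| e^{-(m/3)\alpha}$.
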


\begin{proof}

For each $w \in \mathcal{F}$ of length $n$, and each $\lceil n/3 \rceil \leq i \leq n$, there are less than or equal to $n - i + 1$ heavy subwords of $w$ of length $i$. Therefore, $|\mathcal{H}_i| \leq \sum_{n = i}^{3i} (n-i) |\mathcal{F}_n|$ and so
\begin{equation}\label{measbdeqn}
\sum_{i = 1}^{\infty} |\mathcal{H}_i| e^{-i\alpha} \leq \sum_{i = 1}^{\infty} e^{-i\alpha} \sum_{n = i}^{3i} (n-i+1) |\mathcal{F}_n|
\leq \sum_{n = 1}^{\infty} n^2 |\mathcal{F}_n| e^{-(n/3)\alpha} < 1 - e^{-\alpha}.
\end{equation}
The proof is now completed by using Theorem~\ref{Gmeasbd}. 

\end{proof}

The following is proved analogously. 

\begin{theorem}\label{G'meascor}
If there exists $\alpha < h(X)$ so that $\sum_{n = 1}^{\infty} n^2 |\mathcal{F}_n| e^{-(n/4)\alpha} < 1 - e^{-\alpha}$, then for every ergodic MME $\mu$ on $X$, there exist $\epsilon > 0$ and a syndetic set $S$ so that $\mu(G'_n) > \epsilon$ for all $n \in S$.
\end{theorem}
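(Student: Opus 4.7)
The plan is to mirror the proof of Theorem~\ref{Gmeascor} essentially verbatim, with the only real change being that heavy subwords (length at least $|w|/3$) are replaced by heavy$'$ subwords (length at least $|w|/4$), and with Theorem~\ref{Gmeasbd} replaced by Theorem~\ref{G'measbd} at the end. The point of these ``corollaries'' is just to translate the hypothesis on $|\mathcal{H}_n|$ (respectively $|\mathcal{H}'_n|$) into a cleaner hypothesis involving $|\mathcal{F}_n|$ alone.

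First I would bound $|\mathcal{H}'_i|$ from above in terms of the sequence $(|\mathcal{F}_n|)$. By Definition~\ref{heavy'}, every heavy$'$ subword of length $i$ is a subword of some $w \in \mathcal{F}$ with $i \geq |w|/4$, i.e.\ $|w| \leq 4i$; and of course $|w| \geq i$. For each $w \in \mathcal{F}$ of length $n$ with $i \leq n \leq 4i$ there are at most $n - i + 1$ subwords of $w$ of length $i$. Therefore
\[
|\mathcal{H}'_i| \leq \sum_{n = i}^{4i} (n - i + 1) |\mathcal{F}_n|.
\]

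Next I would substitute this into $\sum_i |\mathcal{H}'_i| e^{-i\alpha}$ and switch the order of summation. For fixed $n$, the indices $i$ contributing $|\mathcal{F}_n|$ are those with $\lceil n/4 \rceil \leq i \leq n$, there are at most $n$ such indices, the factor $(n - i + 1)$ is at most $n$, and $e^{-i\alpha} \leq e^{-(n/4)\alpha}$ (since $i \geq n/4$ and $\alpha > 0$, which we may assume, as $\alpha \leq 0$ would trivially violate $\alpha < h(X)$ for any nontrivial $X$, and otherwise there is nothing to prove). This gives
\[
\sum_{i = 1}^{\infty} |\mathcal{H}'_i| e^{-i\alpha}
\leq \sum_{n = 1}^{\infty} n^2 |\mathcal{F}_n| e^{-(n/4)\alpha}
< 1 - e^{-\alpha},
\]
where the last inequality is the assumed hypothesis. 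Thus the hypothesis of Theorem~\ref{G'measbd} is satisfied, and invoking that theorem produces the desired $\epsilon > 0$ and syndetic $S$ with $\mu(G'_n) > \epsilon$ for $n \in S$.

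There is no real obstacle here: every ingredient (the combinatorial bound on $|\mathcal{H}'_i|$ and Theorem~\ref{G'measbd} itself) is already in hand, and the calculation is a direct analogue of~(\ref{measbdeqn}) with ``$3$'' replaced by ``$4$.'' The only thing to double-check is the arithmetic bookkeeping when switching the order of summation, to confirm that the factor $n^2$ really does absorb both the count of valid indices $i$ and the weight $(n - i + 1)$; this is immediate from the crude bounds above.
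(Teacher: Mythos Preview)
Your proposal is correct and is exactly the approach the paper intends: it states only ``The following is proved analogously'' for Theorem~\ref{G'meascor}, meaning the proof of Theorem~\ref{Gmeascor} with $3$ replaced by $4$ throughout and Theorem~\ref{G'measbd} invoked in place of Theorem~\ref{Gmeasbd}. Your bookkeeping on the double sum and the bound $|\mathcal{H}'_i| \leq \sum_{n=i}^{4i}(n-i+1)|\mathcal{F}_n|$ matches (\ref{measbdeqn}) precisely; the only quibble is that your parenthetical about $\alpha \leq 0$ is slightly misstated (such $\alpha$ does satisfy $\alpha < h(X)$, but then $1 - e^{-\alpha} \leq 0$ makes the hypothesis vacuous unless $\mathcal{F} = \varnothing$), though the conclusion that one may take $\alpha > 0$ is correct.
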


\section{Proving unique MME/K-property/Gibbs bounds for `small' $\mathcal{F}$}\label{final} 

We may now give a simple quantitative condition on $\mathcal{F}$ so that $X$ has a unique MME with the K-property.

We begin with a simple lemma yielding an explicit lower bound on $h(X)$.

\begin{theorem}\label{entbound}
If $\sum_{n=1}^{\infty} |\mathcal{F}_n| (3/|\mathcal{A}|)^n < \frac{1}{5}$, then $h(X) > \log (3|\mathcal{A}|/5)$.
\end{theorem}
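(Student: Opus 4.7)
The plan is to apply Theorem~\ref{millerent} directly with the natural choice $c = 3/|\mathcal{A}|$, which is the value implicit in the hypothesis $\sum |\mathcal{F}_n|(3/|\mathcal{A}|)^n < 1/5$. This gives $c > |\mathcal{A}|^{-1}$ automatically, and the hypothesis becomes exactly $\sum_{v \in \mathcal{F}} c^{|v|} < 1/5$. Since Theorem~\ref{millerent} yields $h(X) \geq \log k$, I only need to show that the relevant inequality is satisfied for some integer $k$ strictly larger than $3|\mathcal{A}|/5$.

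With $c = 3/|\mathcal{A}|$, the right-hand side of Miller's inequality simplifies: $c(|\mathcal{A}| - k + 1) - 1 = 2 + 3/|\mathcal{A}| - 3k/|\mathcal{A}|$. I would verify that this quantity is at least $1/5$ precisely when $k \leq 3|\mathcal{A}|/5 + 1$, and so the hypothesis $\sum_{v \in \mathcal{F}} c^{|v|} < 1/5 \leq c(|\mathcal{A}| - k + 1) - 1$ holds for any such $k$. Thus I can set $k := \lfloor 3|\mathcal{A}|/5 \rfloor + 1$, which is the smallest integer strictly greater than $3|\mathcal{A}|/5$ and which satisfies $k \leq 3|\mathcal{A}|/5 + 1$ by definition of the floor function. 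Theorem~\ref{millerent} then gives $h(X) \geq \log k > \log(3|\mathcal{A}|/5)$, as desired.

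The only genuine obstacle is ensuring the side condition $k < |\mathcal{A}|$ required by Theorem~\ref{millerent}. For $|\mathcal{A}| \geq 3$ this is immediate since $\lfloor 3|\mathcal{A}|/5 \rfloor + 1 \leq 3|\mathcal{A}|/5 + 1 < |\mathcal{A}|$. The only borderline case is $|\mathcal{A}| = 2$, where the hypothesis $|\mathcal{F}_1| (3/2) + \sum_{n \geq 2} |\mathcal{F}_n|(3/2)^n < 1/5$ forces $\mathcal{F} = \varnothing$ (any forbidden word would contribute at least $3/2 > 1/5$), so $X$ is the full $2$-shift with $h(X) = \log 2 > \log(6/5)$ and the conclusion is trivial. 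I would handle this as a one-line remark before the main computation, then carry out the computation above for $|\mathcal{A}| \geq 3$.
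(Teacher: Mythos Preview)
Your proposal is correct and follows essentially the same route as the paper: both apply Theorem~\ref{millerent} with $c = 3/|\mathcal{A}|$ and $k$ equal to (or near) $3|\mathcal{A}|/5 + 1$, after which the right-hand side $c(|\mathcal{A}| - k + 1) - 1$ reduces to $1/5$ and the hypothesis gives exactly the required strict inequality. Your version is in fact slightly more careful than the paper's one-line proof, since you take $k = \lfloor 3|\mathcal{A}|/5 \rfloor + 1$ to ensure $k$ is an integer (as the proof of Theorem~\ref{millerent} implicitly requires) and you dispose of the borderline case $|\mathcal{A}| = 2$ separately; the paper simply writes $k = 3|\mathcal{A}|/5 + 1$ without comment.
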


\begin{proof}
This follows immediately from Theorem~\ref{millerent} with $k = 3|\mathcal{A}|/5 + 1$ and $c = 3/|\mathcal{A}|$.
\end{proof}




\begin{theorem}\label{mainthm2}
If $\sum_{n = 1}^{\infty} n^2 |\mathcal{F}_n| (3/|\mathcal{A}|)^{n/3} < \frac{1}{36}$, then $X$ has a unique MME, which has the K-property.
\end{theorem}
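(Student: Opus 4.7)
The plan is to verify all three hypotheses of Theorem~\ref{mainthm0} (with $R=0$) by applying Theorems~\ref{Lcombbd}, \ref{concat}, and \ref{Gmeascor}, using the canonical parameter choice tied to the ratio $3/|\mathcal{A}|$ already appearing in the statement. The single master hypothesis $\sum_n n^2|\mathcal{F}_n|(3/|\mathcal{A}|)^{n/3} < 1/36$ is tailored to uniformly imply the (weaker) sum conditions required by each ingredient theorem, once the right identifications are made.

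First I would apply Theorem~\ref{entbound} to obtain the entropy lower bound $h(X) > \log(3|\mathcal{A}|/5)$, using that $\sum|\mathcal{F}_n|(3/|\mathcal{A}|)^n \leq \sum n^2 |\mathcal{F}_n|(3/|\mathcal{A}|)^{n/3} < 1/36 < 1/5$ (valid since $3/|\mathcal{A}|\leq 1$). I would then apply Theorem~\ref{Lcombbd} with $\beta = \tfrac{1}{3}\log(|\mathcal{A}|/3)$, making $e^{-n\beta} = (3/|\mathcal{A}|)^{n/3}$; its series condition reduces directly to the master hypothesis, and the side constraint $\beta < 2h(X) - \log|\mathcal{A}|$ follows from the entropy lower bound. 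For Theorem~\ref{concat} I would take $c = 3/|\mathcal{A}|$: both series bounds reduce immediately to the master hypothesis (using $c^n \leq n^2 c^{n/3}$ for the first and $(|\mathcal{A}|c-1)/2 = 1$), giving concatenability of $G$ and hence the middle hypothesis of Theorem~\ref{mainthm0} with $R=0$. Finally, for Theorem~\ref{Gmeascor} I would take $\alpha = \log(|\mathcal{A}|/3)$, so that $e^{-(n/3)\alpha} = (3/|\mathcal{A}|)^{n/3}$; the required inequality becomes $\sum n^2|\mathcal{F}_n|(3/|\mathcal{A}|)^{n/3} < 1 - 3/|\mathcal{A}|$, again immediate from the master hypothesis (for borderline small alphabets like $|\mathcal{A}|=3$, I would nudge $\alpha$ slightly upward to a small positive value below $h(X)$, using the slack from $h(X) > \log(9/5)$). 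Combining, Theorem~\ref{mainthm0} yields a unique MME $\mu$.

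To obtain the K-property I would rerun the same verification on each higher-power shift $X^k$. The forbidden list $\mathcal{F}^{(k)}$ for $X^k$ (viewed over the alphabet $\mathcal{L}_k(X)$) can be controlled in terms of $\mathcal{F}$, $k$, and the bound $|\mathcal{L}_k(X)| < 4 e^{kh(X)}$ from Theorem~\ref{Lcombbd}, which suffices to transfer the master inequality to $X^k$ uniformly in $k$ (up to a $k$-dependent adjustment of constants absorbed into the relevant parameters). Each $X^k$ thus admits a unique MME, which must coincide with $\mu$ viewed as $\sigma^k$-invariant; a standard argument in the style of \cite{CT0,CT1,gapspec}, applied in addition to the product system $X \times X$ to eliminate nontrivial zero-entropy factors, then upgrades this to completely positive entropy, i.e.\ the K-property of $\mu$.

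The main obstacle I anticipate is the K-property step: specifically, verifying that the master inequality really does transfer to each $X^k$ (and to $X \times X$) in a form usable by Theorem~\ref{mainthm0}, after which the rest is standard. The uniqueness-of-MME step for $X$ itself is mostly parameter matching, once the uniformizing choice $c = 3/|\mathcal{A}|$ is committed to.
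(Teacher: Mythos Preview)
Your uniqueness argument is essentially the paper's, with cosmetically different parameter choices (the paper uses $\beta=\log(|\mathcal{A}|/3)$ in Theorem~\ref{Lcombbd} and $\alpha=\log(3|\mathcal{A}|/5)$ in Theorem~\ref{Gmeascor}, but your choices also work once one notes that $|\mathcal{A}|\le 3$ forces $\mathcal{F}=\varnothing$ under the hypothesis). No issue there.

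The K-property step, however, has a genuine gap. The master inequality does \emph{not} transfer to $X^k$ or to $X\times X$ in the way you describe. For $X\times X$ the alphabet is $\mathcal{A}^2$, the natural forbidden list satisfies $|\mathcal{F}^{(2)}_n|\le 2|\mathcal{F}_n|\,|\mathcal{A}|^n$, and the analogue of your master series becomes
\[
\sum_{n} n^2 |\mathcal{F}^{(2)}_n|\,(3/|\mathcal{A}|^2)^{n/3}
\;\le\; 2\sum_{n} n^2 |\mathcal{F}_n|\, 3^{n/3} |\mathcal{A}|^{n/3},
\]
which carries a \emph{growing} factor $|\mathcal{A}|^{n/3}$ and is not controlled by the hypothesis. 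The situation for $X^k$ is worse, and in any case uniqueness of the MME for every $X^k$ only yields total ergodicity of $\mu$, not the K-property; the higher-power shifts are a red herring here.

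The paper's route is to invoke Ledrappier's criterion directly: one shows $X\times X$ has a unique MME, and then $\mu$ is K. Crucially, the paper does \emph{not} try to re-verify the master inequality for $X\times X$. Instead it transfers the hypotheses of Theorem~\ref{mainthm0} one by one: bounded supermultiplicativity and concatenability of $G^{(2)}_n:=G_n\times G_n$ are immediate from the corresponding facts for $X$, and the only nontrivial point is that $\nu(G^{(2)}_n)$ is bounded away from $0$ along a syndetic set for every ergodic MME $\nu$ on $X\times X$. This last is obtained from Theorem~\ref{Gmeasbd} (not \ref{Gmeascor}) via the heavy-subword estimate $|\mathcal{H}^{(2)}_n|\le 2|\mathcal{H}_n|\,|\mathcal{A}|^n$ together with the choice $\alpha=\log(|\mathcal{A}|^2/3)<h(X\times X)$, which makes the extra $|\mathcal{A}|^n$ cancel and reduces the relevant series back to one controlled by the original hypothesis. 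You should replace your higher-power discussion with this product-system argument.
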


\begin{proof}
Suppose that $\mathcal{F}$ and $\mathcal{A}$ satisfy the stated hypothesis, i.e. that
\begin{equation}\label{mainhyp}
\sum_{n = 1}^{\infty} n^2 |\mathcal{F}_n| (3/|\mathcal{A}|)^{n/3} < \frac{1}{36}.
\end{equation}

This clearly implies the hypothesis of Theorem~\ref{entbound}, and so $h(X) > \log(3|\mathcal{A}|/5)$. Therefore,
$2h(X) - \log |\mathcal{A}| > \log(9|\mathcal{A}|/25) > \log(|\mathcal{A}|/3)$, and so we can take $\beta = \log(|A|/3)$ in Theorem~\ref{Lcombbd}. The required inequality is
\[
\sum_{n = 1}^{\infty} n |\mathcal{F}_n| (3/|\mathcal{A}|)^n < \frac{1}{36},
\]
which follows from (\ref{mainhyp}). Therefore, the conclusion of Theorem~\ref{Lcombbd} holds. 

Now, define $G$ as was done in Section~\ref{defg}. We now wish to apply Theorem~\ref{concat} with $c = 3/|\mathcal{A}|$. The required inequalities are then
\[
\sum_{n = 1}^{\infty} |\mathcal{F}_n| (3/|\mathcal{A}|)^n < \frac{c|\mathcal{A}| - 1}{2} = 1 \textrm{ and}
\]
\[
\sum_{n = 1}^{\infty} n|\mathcal{F}_n| (3/|\mathcal{A}|)^{n/3} < \frac{1}{4},
\]
both of which clearly follow from (\ref{mainhyp}). Therefore, we know that $G$ has the concatenatability property from Theorem~\ref{concat}. 

Finally, we recall that $h(X) > \log(3|\mathcal{A}|/5)$, and so we can take $\alpha = \log(3|\mathcal{A}|/5)$ in Theorem~\ref{Gmeascor}. The required inequality is 
\[
\sum_{n = 1}^{\infty} n^2 |\mathcal{F}_n| (5/3|\mathcal{A}|)^{n/3} < 1 - \frac{5}{3|\mathcal{A}|}.
\]
This follows from (\ref{mainhyp}), if we note that $5/3 < 3$ and that $|\mathcal{A}| > 1$, and so the right-hand side above is at least $\frac{1}{6}$. So, the conclusion
of Theorem~\ref{Gmeascor} also holds, and now uniqueness of the MME $\mu$ on $X$ is an immediate consequence of Theorems~\ref{mainthm0}, \ref{Lcombbd}, \ref{concat}, and \ref{Gmeascor}.
 
We will now use a result of Ledrappier (Proposition 1.4 from \cite{ledrappier}) which states that if $X$ is a subshift and $X \times X$ has a unique MME (which must be $\mu \times \mu$), then $\mu$ has the K-property. We wish to use Theorem~\ref{mainthm0} on $X \times X$, and first note that if we define $G^{(2)}_n = G_n \times G_n$, then all properties from the hypotheses 
of Theorem~\ref{mainthm0} automatically carry over from $X$ except for the last one, that $\nu(G^{(2)}_n)$ is bounded away from $0$ along a syndetic set for every ergodic MME $\nu$ on $X \times X$. We will verify this by using Theorem~\ref{Gmeasbd}.

It is clear that $X \times X$ can be viewed as a subshift on alphabet $\mathcal{A}^2$ with forbidden list $\mathcal{F}^{(2)} = \{(v, w) \ : \ v \in \mathcal{F}, |w| = |v|\} \cup \{(v, w) \ : \ w \in \mathcal{F}, |v| = |w|\}$. By Theorem~\ref{entbound}, $h(X \times X) = 2h(X) \geq \log(9|\mathcal{A}|^2/25)$. Every heavy subword of a word in $\mathcal{F}^{(2)}$ has either first or second coordinate given by a heavy subword of $\mathcal{F}$, and so if we define $\mathcal{H}_n$ to be the set of heavy subwords of words in $\mathcal{F}$ and $\mathcal{H}^{(2)}_n$ to be the set of heavy subwords of words in $\mathcal{F}^{(2)}$,
then $|\mathcal{H}^{(2)}_n| \leq 2|\mathcal{H}_n| |\mathcal{A}|^n$, and as argued in the proof of Theorem~\ref{Gmeasbd}, $|\mathcal{H}_i| \leq \sum_{n = i}^{3i} (n-i+1) |\mathcal{F}_n|$ for all $i$. Then,
\begin{multline*}
\sum_{i = 1}^{\infty} |\mathcal{H}^{(2)}_i| (3/|\mathcal{A}|^2)^{i} \leq
\sum_{i = 1}^{\infty} 2|\mathcal{H}_i| (3/|\mathcal{A}|)^i \leq 2 \sum_{i = 1}^{\infty} (3/|\mathcal{A}|)^i \sum_{n = i}^{3i} (n-i+1) |\mathcal{F}_n| \\ 
\leq 2 \sum_{n = 1}^{\infty} n^2 |\mathcal{F}_n| (3/|\mathcal{A}|)^{n/3}.
\end{multline*}
We now apply Theorem~\ref{Gmeasbd} with $\alpha = \log(|\mathcal{A}|^2/3)$; note that $\alpha < h(X \times X)$, $e^{-\alpha} = \frac{3}{|\mathcal{A}|^2} \leq \frac{3}{4}$ and that by (\ref{mainhyp}), the final term above is less than $\frac{1}{18} < 1 - e^{-\alpha}$. Therefore, $\nu(G^{(2)}_n)$ is bounded away from $0$ along a syndetic set for every ergodic MME $\nu$ on $X \times X$. Now all hypotheses of Theorem~\ref{mainthm0} are satisfied, so $X \times X$ has a unique MME $\mu \times \mu$, implying that $\mu$ has the K-property.

\end{proof}

Under slightly stronger hypotheses, we can use Theorems~\ref{gibbsthm0} and \ref{easygibbs} to prove Gibbs bounds for the unique MME $\mu$ guaranteed by Theorem~\ref{mainthm2} for many words in 
$\mathcal{L}(X)$, in the spirit of similar results proved in \cite{CT0}. Recall the definition of $G'$ from Section~\ref{defg}.

\begin{theorem}\label{gibbsthm}
If $\sum_{n = 1}^{\infty} n^2 |\mathcal{F}_n| (3/|\mathcal{A}|)^{n/4} < \frac{1}{36}$ and $\mu$ is the unique MME on the induced subshift $X$ (guaranteed by Theorem~\ref{mainthm2}), then there exist $\epsilon > 0$ and a syndetic set $S$ so that $\mu(G'_n) > \epsilon$ for all $n \in S$, and there exist constants 
$D, D'$ so that for all $w \in G'$,
\[
De^{-|w|h(X)} \leq \mu([w]) \leq D'e^{-|w|h(X)}.
\]
\end{theorem}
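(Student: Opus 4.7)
The plan is to deduce this theorem by a direct assembly of previously established results, exactly in parallel to the way Theorem~\ref{mainthm2} combined Theorems~\ref{mainthm0}, \ref{Lcombbd}, \ref{concat}, and \ref{Gmeascor}. The role of Theorem~\ref{mainthm0} is now played by Theorem~\ref{gibbsthm0}, which requires the slightly stronger three-word specification property on $G'$ rather than two-word concatenability on $G$; this is precisely why the exponent $n/4$ in the hypothesis replaces the $n/3$ from Theorem~\ref{mainthm2}, matching the distinction between Definitions~\ref{heavy} and \ref{heavy'}.

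First I would observe that since $(3/|\mathcal{A}|)^{n/3} \leq (3/|\mathcal{A}|)^{n/4}$ whenever $|\mathcal{A}| \geq 3$ (and small-alphabet cases force $\mathcal{F}$ to be empty, making the theorem trivial), the present hypothesis implies that of Theorem~\ref{mainthm2}. This at once yields the unique MME $\mu$ and, via Theorem~\ref{Lcombbd}, a constant $C$ with $|\mathcal{L}_n(X)| < Ce^{nh(X)}$ for all $n$. From this upper bound alone, Theorem~\ref{easygibbs} immediately supplies the upper Gibbs bound $\mu([w]) \leq D'e^{-|w|h(X)}$ for every $w \in \mathcal{L}(X)$, and in particular for every $w \in G'$.

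For the remaining conclusions I would apply Theorems~\ref{concat'}, \ref{G'meascor}, and \ref{gibbsthm0} in turn, using the single parameter choice $c = 3/|\mathcal{A}|$ and $\alpha = \log(|\mathcal{A}|/3)$. Theorem~\ref{concat'} gives the three-word concatenability of $G'$ with $R = 0$, after verifying $\sum |\mathcal{F}_n|(3/|\mathcal{A}|)^n < 1 = (|\mathcal{A}|c - 1)/2$ and $\sum n|\mathcal{F}_n|(3/|\mathcal{A}|)^{n/4} < 1/4$, both of which follow at once from the assumed $1/36$ bound on the $n^2$-weighted series. Theorem~\ref{G'meascor} then gives the syndetic lower bound $\mu(G'_n) > \epsilon$: the condition $\alpha < h(X)$ is guaranteed by $h(X) > \log(3|\mathcal{A}|/5) > \log(|\mathcal{A}|/3)$ from Theorem~\ref{entbound}, and the required inequality $\sum n^2|\mathcal{F}_n|(3/|\mathcal{A}|)^{n/4} < 1 - 3/|\mathcal{A}|$ follows from $1/36 < 1 - 3/|\mathcal{A}|$ for $|\mathcal{A}| \geq 4$. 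Finally, Theorem~\ref{gibbsthm0} applied with $G'$ and $R = 0$ yields the lower Gibbs bound $\mu([w]) \geq De^{-|w|h(X)}$ for all $w \in G'$.

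The main work here is bookkeeping on constants rather than any substantive difficulty, and I expect no real obstacle beyond confirming that the single choice $c = 3/|\mathcal{A}|$, $\alpha = \log(|\mathcal{A}|/3)$ simultaneously satisfies every auxiliary hypothesis. The statement is structurally a direct analog of Theorem~\ref{mainthm2}, with $G'$ replacing $G$ throughout and $n/4$ replacing $n/3$ precisely to accommodate the three-word rather than two-word combinatorial requirement imposed by Theorem~\ref{gibbsthm0}.
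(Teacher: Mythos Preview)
Your proposal is correct and follows essentially the same route as the paper: reduce to Theorem~\ref{mainthm2} for uniqueness and the $|\mathcal{L}_n(X)|$ bound, invoke Theorem~\ref{easygibbs} for the upper Gibbs bound, then verify the hypotheses of Theorems~\ref{concat'} and \ref{G'meascor} with $c = 3/|\mathcal{A}|$ and feed these into Theorem~\ref{gibbsthm0}. The only difference is your choice $\alpha = \log(|\mathcal{A}|/3)$ in Theorem~\ref{G'meascor}, whereas the paper takes $\alpha = \log(3|\mathcal{A}|/5)$; the paper's choice works uniformly for $|\mathcal{A}| \geq 2$ since $1 - 5/(3|\mathcal{A}|) \geq 1/6 > 1/36$, while yours requires the separate (but correct) observation that $|\mathcal{A}| \leq 3$ forces $\mathcal{F} = \varnothing$.
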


\begin{proof}
Assume that $\mathcal{F}$ and $\mathcal{A}$ satisfy the hypotheses, i.e. that
\begin{equation}\label{mainhyp'}
\sum_{n = 1}^{\infty} n^2 |\mathcal{F}_n| (3/|\mathcal{A}|)^{n/4} < \frac{1}{36}.
\end{equation}

This is stronger than the hypothesis of Theorem~\ref{mainthm2}, and so we know that $h(X) > \log(3|\mathcal{A}|/5)$, that $X$ has a unique MME $\mu$, and that the conclusion of Theorem~\ref{Lcombbd} holds for $X$. 
Theorem~\ref{easygibbs} then implies the desired upper bound for $\mu$ (in fact, this upper bound holds for all words in $\mathcal{L}(X)$.) It remains only to verify the hypotheses of 
Theorem~\ref{gibbsthm0} to get the lower bound.

We now wish to apply Theorem~\ref{concat'} with $c = 3/|\mathcal{A}|$. The required inequalities are then
\[
\sum_{n = 1}^{\infty} |\mathcal{F}_n| (3/|\mathcal{A}|)^n < \frac{c|\mathcal{A}| - 1}{2} = 1 \textrm{ and}
\]
\[
\sum_{n = 1}^{\infty} n|\mathcal{F}_n| (3/|\mathcal{A}|)^{n/4} < \frac{1}{4},
\]
both of which clearly follow from (\ref{mainhyp'}). Therefore, we know that $G'$ has the threefold concatenatability property from Theorem~\ref{concat'}. 

Finally, we recall that $h(X) > \log(3|\mathcal{A}|/5)$, and so we can take $\alpha = \log(3|\mathcal{A}|/5)$ in Theorem~\ref{G'meascor}. The required inequality is 
\[
\sum_{n = 1}^{\infty} n^2 |\mathcal{F}_n| (5/3|\mathcal{A}|)^{n/4} < 1 - \frac{5}{3|\mathcal{A}|}.
\]
This follows from (\ref{mainhyp}), if we note that $5/3 < 3$ and that $|\mathcal{A}| > 1$, and so the right-hand side above is at least $\frac{1}{6}$. The conclusion
of Theorem~\ref{G'meascor} then holds, and our proof is now complete by using Theorem~\ref{gibbsthm0}.

\end{proof}

\begin{remark} 
We have made no effort to optimize constants throughout, instead just aiming to give a proof of concept for the heuristic ``infinite forbidden lists which are small in some way can imply good properties,'' and so some constants might be improvable. However, we would like to note that with our current proof, the $n/3$ in the exponent of Theorem~\ref{mainthm2} and $n/4$ in the exponent of Theorem~\ref{gibbsthm} cannot be improved. In other words, at the moment, to prove uniqueness of MME purely from the size of the forbidden list $\mathcal{F}$, we need exponential growth rate at most $O(|\mathcal{A}|^{1/3})$, and to prove the restricted lower Gibbs bound we need exponential growth rate at most $O(|\mathcal{A}|^{1/4})$. It is an interesting question whether these asymptotics are optimal (we strongly suspect not).
\end{remark}

\begin{remark}
It is natural to wonder whether these Gibbs bounds could have been used to give an alternate proof of uniqueness of MME in the same style
as proofs of Climenhaga and Thompson in \cite{CT0}, \cite{CT1}, and \cite{CT2}. Without giving full details, the technique there is to begin with
the definition of a known MME $\mu$ from the construction in the proof of Theorem~\ref{gibbsthm}, to prove $\mu$ is ergodic, and then to use the Gibbs property for $\mu$
to contradict mutual singularity of $\mu$ and another conjectured ergodic MME $\nu$. Interestingly, most of this proof could be adapted to our
setting, except for the proof of ergodicity of $\mu$! The proof used in the aforementioned papers uses some technical properties of the set on 
which the Gibbs bound holds (for us, $G'$) that do not necessarily hold in our setting, and we do not know of an alternate proof, so the bounds alone do not seem to clearly imply uniqueness in our setting.
\end{remark}

We note that there is no hope to prove a version of Theorem~\ref{gibbsthm} which holds on all of $\mathcal{L}(X)$. In particular, it is possible that $\mu$ is not even fully supported.

\begin{theorem}
There exists $X$ satisfying the hypotheses of Theorem~\ref{gibbsthm} whose unique MME $\mu$ is not fully supported.
\end{theorem}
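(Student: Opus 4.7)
The plan is to exhibit an explicit subshift $X = X(\mathcal{A}, \mathcal{F})$ satisfying the hypotheses of Theorem~\ref{gibbsthm} and possessing a proper invariant subshift $Y \subsetneq X$ with $h(Y) = h(X)$. Since $X$ has a unique MME $\mu$ by Theorem~\ref{mainthm2}, and $\mu$ must be supported on $\mathrm{supp}(\mu) \subseteq Y$, we will have $\mu([w]) = 0$ for every $w \in \mathcal{L}(X) \setminus \mathcal{L}(Y)$, proving that $\mu$ is not fully supported.

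The most naive attempt fails: take $\mathcal{A} = \{0, 1, \ldots, N-1\}$, let $a = N-1$, and set $X = Y \cup \{\ldots aaa\ldots\}$ with $Y = \{0,\ldots,N-2\}^{\mathbb{Z}}$. Then $h(X) = h(Y) = \log(N-1)$ and the MME (Bernoulli on $\{0,\ldots,N-2\}$) gives $\mu([a]) = 0$. However, the minimal forbidden list of this $X$ contains the $2(N-1)$ length-$2$ words $\{aj, ja : j \neq a\}$, contributing at least $8(N-1)(3/N)^{1/2} \sim \sqrt{N}$ to the series in the hypothesis of Theorem~\ref{gibbsthm}, which cannot be made less than $1/36$ for any $N$. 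Thus the construction must be more subtle: the separation between the full-entropy piece $Y$ and the peripheral component containing the word absent from $\mathcal{L}(Y)$ has to be enforced by long, sparse forbidden words rather than by many short ones.

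The steps I will carry out are: (1) describe $\mathcal{A}$ (taken very large) and $\mathcal{F}$ (a carefully structured sparse collection of long forbidden words designed so that any biinfinite sequence containing a distinguished letter $a$ is confined to a zero-entropy invariant set, while sequences avoiding $a$ constitute a full-entropy component); (2) verify by direct estimation that $\sum_n n^2 |\mathcal{F}_n|(3/|\mathcal{A}|)^{n/4} < 1/36$, which will follow from the sparsity of $\mathcal{F}$ combined with the rapid decay of $(3/|\mathcal{A}|)^{n/4}$ when $|\mathcal{A}|$ is chosen sufficiently large; (3) identify $Y \subsetneq X$ as a full shift on the sub-alphabet $\mathcal{A} \setminus \{a\}$ (or a similar entropy-maximizing object) and observe that the word $w = a$ lies in $\mathcal{L}(X)$ via a specific periodic point in $X$ but not in $\mathcal{L}(Y)$; (4) verify $h(Y) = h(X)$ using Theorem~\ref{entbound} to control $h(X)$ from below and a direct combinatorial argument to control it from above, then invoke Theorem~\ref{mainthm2} to produce the unique MME and conclude $\mu([w]) = 0$.

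The main obstacle is step (1): designing $\mathcal{F}$ so that it simultaneously (a) has combinatorial growth slow enough for the series bound, and (b) forces any orbit hitting the cylinder $[w]$ into a strictly lower entropy invariant subset. Meeting both constraints requires taking $|\mathcal{A}|$ extremely large relative to the quantities of interest and exploiting the freedom to use forbidden words of arbitrary length in a structured, sparse arrangement. The verification that the resulting $X$ decomposes in the required way — in particular, that no high-entropy ergodic invariant measure charges the peripheral component — is the technical heart of the argument, and once this is established the conclusion follows by combining the uniqueness of MME from Theorem~\ref{mainthm2} with the definition of the support of a measure.
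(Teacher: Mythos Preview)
Your plan never supplies the construction in step~(1), and as you yourself flag, that is the entire content of the proof. More seriously, the specific target you set --- forcing the \emph{single letter} $a$ to have $\mu([a])=0$ while keeping the series small --- appears to be infeasible. If $X$ is to equal $(\mathcal{A}\setminus\{a\})^{\mathbb{Z}}\cup\{a^{\mathbb{Z}}\}$ (or any $X$ in which every sequence containing $a$ lies in a zero-entropy set), then for every $a$-free bi-infinite context $\ldots c_{-1}\,a\,c_1\ldots$ the list $\mathcal{F}$ must contain a subword straddling the $a$. Pushing all forbidden words to length at least $M$ forces you to cover all $a$-free contexts of total length $M-1$ on the two sides, and any such covering has cardinality growing like a power of $|\mathcal{A}|-1$; the resulting contribution $M^{2}|\mathcal{F}_{M}|(3/|\mathcal{A}|)^{M/4}$ blows up rather than shrinks. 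So ``taking $|\mathcal{A}|$ extremely large'' works against you, not for you.

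The paper sidesteps all of this with a much simpler idea: do not try to isolate the letter $a$, isolate the \emph{word} $a^{N}$. Set $\mathcal{F}=\{a^{N}b:\,b\neq a\}\cup\{b\,a^{N}:\,b\neq a\}$; this is $2(|\mathcal{A}|-1)$ words, all of length $N+1$, so the series is a single term $(N+1)^{2}\cdot 2(|\mathcal{A}|-1)\cdot(3/|\mathcal{A}|)^{(N+1)/4}$, easily below $1/36$ for large $|\mathcal{A}|$ and $N$. By construction $[a^{N}]=\{a^{\mathbb{Z}}\}$, so if the unique ergodic MME $\mu$ gave $[a^{N}]$ positive mass it would equal $\delta_{a^{\mathbb{Z}}}$, contradicting $h(\mu)=h(X)>0$. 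No entropy comparison $h(Y)=h(X)$ is needed at all --- the argument is just ``ergodic positive-entropy measures cannot be supported on a fixed point.'' Your outline can be salvaged immediately by replacing the single letter $a$ with the word $a^{N}$ and dropping step~(4).
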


\begin{proof}

Fix any alphabet $\mathcal{A}$ and any $N$, and choose any letter $a \in \mathcal{A}$. Define the forbidden list $\mathcal{F} = \{a^N b \ : \ b \in \mathcal{A} \setminus \{a\}\} \cup \{b a^N \ : \ b \in \mathcal{A} \setminus \{a\}\}$. Note that $a^N \in \mathcal{L}(X)$ since $a^{\mathbb{Z}} \in X$. However, it's clear from the definition of $\mathcal{F}$ that $[a^N] = \{a^{\mathbb{Z}}\}$.

We now simply note that $\mathcal{F}$ has exactly $2(|\mathcal{A}| - 1)$ words, all of length $N + 1$, and it's obvious that for large $\mathcal{A}$ and $N$, the hypotheses of Theorem~\ref{gibbsthm} are satisfied. However, for such $\mathcal{A}$ and $N$, the unique ergodic MME $\mu$ must have $\mu([a^N]) = 0$; since $\mu$ is ergodic, 
$\mu([a^N]) > 0$ would imply $\mu = \delta_{a^{\mathbb{Z}}}$, which is not possible since $\mu$ is an MME and $h(X) > 0$ by Theorem~\ref{entbound}.

\end{proof}

\begin{remark}
We note that the example $X$ from the previous theorem is not even topologically transitive, and so our hypotheses do not imply any sort of topological mixing property at all.
\end{remark}


\section{Applications}\label{apps}

Theorems~\ref{mainthm2} and \ref{gibbsthm} apply whenever $|\mathcal{F}_n|$ has growth rate much less than $|\mathcal{A}|^(n/4)$, and is small/empty for small values of $n$ (to make the value of the infinite series in our hypotheses small enough.) The following is just one simple explicit example of such a class.

\begin{corollary}\label{biga}
If $|\mathcal{A}| \geq 768$, $\mathcal{F}_n = \varnothing$ for $n < 14$, and $|\mathcal{F}_n| \leq 2^n$ for all $n \geq 13$, then $X$ satisfies the conclusions of Theorems~\ref{mainthm2} and \ref{gibbsthm}.
\end{corollary}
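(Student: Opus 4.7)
The plan is to verify directly the single hypothesis of Theorem~\ref{gibbsthm}, namely
\[
\sum_{n=1}^{\infty} n^2 |\mathcal{F}_n| \left(\frac{3}{|\mathcal{A}|}\right)^{n/4} < \frac{1}{36}.
\]
This hypothesis is strictly stronger than the one required by Theorem~\ref{mainthm2}, so confirming it yields both sets of conclusions in one stroke, with no further work beyond invoking the two theorems.

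The design of the numerical constants in the corollary makes the verification clean. The identity $768 = 3 \cdot 4^4$ gives $(3/|\mathcal{A}|)^{1/4} \leq (3/768)^{1/4} = 1/4$ whenever $|\mathcal{A}| \geq 768$. Combining this with the assumptions $\mathcal{F}_n = \varnothing$ for $n < 14$ and $|\mathcal{F}_n| \leq 2^n$ for $n \geq 13$ collapses the sum above to at most
\[
\sum_{n=14}^{\infty} n^2 \cdot 2^n \cdot 4^{-n} = \sum_{n=14}^{\infty} n^2 \, 2^{-n}.
\]

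The remaining step is a direct evaluation of this tail. Using the standard closed form $\sum_{n=0}^{\infty} n^2 x^n = x(1+x)/(1-x)^3$ at $x = 1/2$ gives a total of $6$; subtracting the first thirteen terms (written over the common denominator $2^{13} = 8192$) leaves $227/8192$. Since $227 \cdot 36 = 8172 < 8192$, one has $227/8192 < 1/36$, which verifies the hypothesis and hence the corollary via Theorem~\ref{gibbsthm}.

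The main (indeed, the only) obstacle is that the arithmetic is tight: the bound would fail for somewhat smaller $|\mathcal{A}|$, a weaker growth rate than $2^n$, or a shorter cutoff for $\mathcal{F}_n$, so one must keep track of the constants carefully. However, no conceptual ideas beyond the numerical check are required.
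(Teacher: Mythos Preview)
Your proof is correct and follows exactly the same approach as the paper's own proof: both reduce the hypothesis of Theorem~\ref{gibbsthm} to the tail sum $\sum_{n=14}^{\infty} n^2 2^{-n}$ via the identity $(3/768)^{1/4}=1/4$, and then verify this tail is below $1/36$. Your version is in fact more detailed, since the paper simply asserts the final inequality while you compute the tail exactly as $227/8192$ and check $227\cdot 36<8192$.
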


\begin{proof}

Under the given hypotheses,
\[
\sum_{n = 1}^{\infty} n^2 |\mathcal{F}_n| (3/|\mathcal{A}|)^{n/4} \leq \sum_{n = 14}^{\infty} n^2 256^{-n/4} 2^n = \sum_{n = 14}^{\infty} n^2 2^{-n} < 1/36.
\]
We now simply apply Theorems~\ref{mainthm2} and \ref{gibbsthm}.

\end{proof}

The rest of our applications are based on existing classes of subshifts in the literature, and each involves invoking properties of the subshift/forbidden list to more directly use our intermediate results in Sections~\ref{ub}-\ref{bounding} rather than directly applying Theorems~\ref{mainthm2} and \ref{gibbsthm}.

\subsection{$\alpha$-$\beta$ shifts}

We consider the $\alpha$-$\beta$ shifts (also called intermediate $\beta$-shifts) defined in \cite{hofbauerAB} and studied in, among other works, \cite{CT1} and \cite{liAB}. For all $\alpha$-$\beta$ shifts, the alphabet is $\mathcal{A} = \{0, \ldots, \ell\}$ for $\ell := \lceil \alpha + \beta \rceil - 1$, and we always take $\ell$ to be defined in this way.

The $\alpha$-$\beta$ shift is in fact a symbolic coding of the self-map $Tx := \alpha + \beta x \pmod 1$ of the unit interval $[0,1)$. Namely, define the intervals 
\[
J_0 = \Big[ 0, \frac{1 - \alpha}{\beta} \Big), 
J_1 = \Big[ \frac{1 - \alpha}{\beta}, \frac{2 - \alpha}{\beta} \Big), \ldots, J_{\ell} = \Big[ \frac{\ell - \alpha}{\beta}, 1 \Big),
\]
and for every $x \in [0,1)$ define a one-sided sequence $\Phi(x)$ where $T^n x \in J_{(\Phi(x))_n}$ for all $n \geq 0$. The one-sided $\alpha$-$\beta$ shift $X'_{\alpha, \beta}$ is then just $\overline{\{\Phi(x) \ : \ x \in [0,1)\}}$, and the two-sided $\alpha$-$\beta$ shift $X_{\alpha, \beta}$ is just the natural extension of $X'_{\alpha, \beta}$. 

If we define $\textbf{a} = \Phi(0)$ and $\textbf{b} = \lim_{t \rightarrow 1^-} \Phi(t)$ (suppressing dependence on $\alpha$ and $\beta$), then it was shown in \cite{hofbauerAB} that $X_{\alpha, \beta}$ can alternately be characterized as
\begin{equation}\label{abdef}
X_{\alpha, \beta} = \{x \in \mathcal{A}^{\mathbb{Z}} \ : \ \forall n, \textbf{a} \preceq x_n x_{n+1} x_{n+2} \ldots \preceq \textbf{b}\},
\end{equation}
where $\preceq$ denotes the lexicographic order.

It was recently shown in \cite{robAB} that when $\beta > 2$ and there is a prefix of $\alpha$ which never appears in $\beta$, then $X_{\alpha, \beta}$ has a unique measure of maximal entropy. Their proof technique uses the decomposition results of Climenhaga-Thompson from (\cite{CT1}). Using our results, we will be able to obtain entire intervals of $\alpha, \beta$ where 
$X_{\alpha, \beta}$ behaves much like a $\beta$-shift. The key observation is that by definition, all $\alpha$-$\beta$ shifts have very small forbidden lists.

\begin{lemma}\label{twowords}
For every $\alpha, \beta$, there exists a forbidden list $\mathcal{F}_{\alpha, \beta}$ inducing $X_{\alpha, \beta}$ where
$|(\mathcal{F}_{\alpha, \beta})_n| \leq 2\ell$ for all $n$, and $(\mathcal{F}_{\alpha, \beta})_n = \varnothing$ if $x(\alpha)$ begins with $n$ $0$s and $x(\beta)$ begins with $n$ $\ell$s.
\end{lemma}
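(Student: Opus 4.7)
The plan is to construct $\mathcal{F}_{\alpha,\beta}$ directly from the lexicographic characterization (\ref{abdef}). Writing $\textbf{a} = a_1 a_2 \ldots$ and $\textbf{b} = b_1 b_2 \ldots$, for each $n \geq 1$ I would take
\[
F_n^- := \{a_1 a_2 \ldots a_{n-1} c \ : \ 0 \leq c < a_n\}, \quad F_n^+ := \{b_1 b_2 \ldots b_{n-1} c \ : \ b_n < c \leq \ell\},
\]
and set $\mathcal{F}_{\alpha,\beta} := \bigcup_{n \geq 1}(F_n^- \cup F_n^+)$. In words, $F_n^-$ consists of those length-$n$ words that strictly precede $\textbf{a}|_n$ in lex order with the disagreement occurring at the last coordinate, and $F_n^+$ is the dual family sitting strictly above $\textbf{b}|_n$.

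The main step would be to verify $X(\mathcal{A}, \mathcal{F}_{\alpha,\beta}) = X_{\alpha,\beta}$. In one direction, any $w \in F_n^-$ appearing in some $x \in X_{\alpha,\beta}$ at position $k$ forces $\sigma^{k-1}x$ to begin with $w$, hence $\sigma^{k-1}x \prec \textbf{a}$, contradicting (\ref{abdef}); the $F_n^+$ case is symmetric. Conversely, suppose $x$ contains no word of $\mathcal{F}_{\alpha,\beta}$ but, for contradiction, $\sigma^k x \prec \textbf{a}$ for some $k$. Let $n$ be the least index at which $\sigma^k x$ and $\textbf{a}$ disagree; then $(\sigma^k x)_1 \ldots (\sigma^k x)_{n-1} = a_1 \ldots a_{n-1}$ and $(\sigma^k x)_n < a_n$, so this length-$n$ prefix lies in $F_n^-$ and occurs as a subword of $x$, a contradiction. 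The dual argument rules out $\sigma^k x \succ \textbf{b}$, so by (\ref{abdef}) we have $x \in X_{\alpha,\beta}$.

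The counting and the emptiness clause then follow immediately: $|F_n^-| = a_n \leq \ell$ and $|F_n^+| = \ell - b_n \leq \ell$, giving $|(\mathcal{F}_{\alpha,\beta})_n| \leq 2\ell$; and if $\textbf{a}$ begins with $n$ zeros while $\textbf{b}$ begins with $n$ copies of $\ell$, then $a_k = 0$ and $b_k = \ell$ for every $k \leq n$, making each $F_k^-$ and $F_k^+$ empty, and in particular $(\mathcal{F}_{\alpha,\beta})_n = \varnothing$. The only step with any genuine subtlety is the equivalence of the two descriptions in the second paragraph, and the only thing to watch is that (\ref{abdef}) uses non-strict $\preceq$, so sequences with $\sigma^k x = \textbf{a}$ or $\textbf{b}$ are permitted and need not be caught by any forbidden word; this is harmless because my construction only targets strict lex violations, which are automatically witnessed at a first finite coordinate. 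I expect no serious obstacle beyond this bookkeeping.
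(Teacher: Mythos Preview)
Your proposal is correct and is essentially the same construction as the paper's: the paper defines $\mathcal{F}_{\alpha} = \{pa : p$ is a prefix of $\textbf{a}$ followed in $\textbf{a}$ by a letter differing from $a\}$ (with the appropriate inequality) and similarly $\mathcal{F}_{\beta}$, which is exactly your $\bigcup_n F_n^-$ and $\bigcup_n F_n^+$. The paper simply asserts that the bounds and the equivalence with (\ref{abdef}) are immediate, whereas you spell out the first-disagreement argument and the counts $|F_n^-| = a_n$, $|F_n^+| = \ell - b_n$; your version is more detailed but not a different approach.
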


\begin{proof}

Define $\mathcal{F}_{\alpha} = \{p a \ : \ p$ is a prefix of $\textbf{a}$ which is immediately followed by a letter less than $a\}$,
$\mathcal{F}_{\beta} = \{p b \ : \ p$ is a prefix of $\textbf{b}$ which is immediately followed by a letter greater than $b\}$,
and $\mathcal{F}_{\alpha, \beta} = \mathcal{F}_{\alpha} \cup \mathcal{F}_{\beta}$. The claimed bounds on 
$|(\mathcal{F}_{\alpha, \beta})_n|$ are immediate, and the lexicographic definition (\ref{abdef})
of $X_{\alpha, \beta}$ implies that $\mathcal{F}_{\alpha, \beta}$ is a forbidden list inducing $X_{\alpha, \beta}$. 

\end{proof}

We now fix the forbidden list from Lemma~\ref{twowords} as $\mathcal{F}_{\alpha, \beta}$ for every $\alpha, \beta$. Similarly, 
we define $\mathcal{H}_{\alpha, \beta}$ and $\mathcal{H}'_{\alpha, \beta}$ as the set of heavy/heavy$'$ subwords of $\mathcal{F}_{\alpha, \beta}$ and $G_{\alpha, \beta}$ and $G'_{\alpha, \beta}$ as the set of words which do not begin or end with a heavy/heavy$'$ subword of $\mathcal{F}_{\alpha, \beta}$.

We could already apply Theorems~\ref{mainthm2} and \ref{gibbsthm} when $\alpha + \beta > 3$ (since then $|\mathcal{A}| = \ell + 1 \geq 4$). However, with a little more work, we can treat all possible $\ell$ by instead
directly verifying the hypotheses of Theorems~\ref{mainthm0} and \ref{gibbsthm0} by using Theorems~\ref{Lcombbd}, \ref{concat}, \ref{concat'}, \ref{Gmeascor}, and \ref{G'meascor}. We first need a way to bound $h(X_{\alpha, \beta})$ from below.






\begin{lemma}\label{betaent}
If $\textbf{a}$ begins with $N$ $0$s and $\textbf{b}$ begins with $N$ $\ell$s, then $h(X_{\alpha, \beta}) \geq \frac{N-2}{N} \log (\ell + 1)$.
\end{lemma}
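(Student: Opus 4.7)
The plan is to reduce the problem to a lower bound on entropy of a simple SFT. The first observation concerns the structure of the forbidden list $\mathcal{F}_{\alpha,\beta}$ of Lemma~\ref{twowords}: any word $pa \in \mathcal{F}_\alpha$ has $p$ a prefix of $\textbf{a}$ with $a$ strictly less than $\textbf{a}_{|p|+1}$. Since the first $N$ letters of $\textbf{a}$ are $0$, one needs $|p| \geq N$, forcing $|pa| \geq N+1$ with $p$ beginning with $0^N$; symmetrically, every word of $\mathcal{F}_\beta$ has length at least $N+1$ and begins with $\ell^N$. In particular, every forbidden word contains $0^N$ or $\ell^N$ as a subword.

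Assuming $\ell \geq 1$ (the bound is trivially $0$ otherwise), I define $Y$ to be the SFT on alphabet $\{0,\ldots,\ell\}$ with forbidden list $\{0^N, \ell^N\}$. Any biinfinite sequence in $Y$ avoids $0^N$ and $\ell^N$ as subwords and thus avoids all of $\mathcal{F}_{\alpha,\beta}$, so $Y \subseteq X_{\alpha,\beta}$ and $h(X_{\alpha,\beta}) \geq h(Y)$. It therefore suffices to prove $h(Y) \geq \tfrac{N-2}{N} \log(\ell+1)$.

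The key step is to exhibit many words of each length $Nm$ in $\mathcal{L}(Y)$. I would consider concatenations $w = v_1 \cdots v_m$ where each $v_j \in \mathcal{A}^N$ starts with $0$, ends with $\ell$, and has arbitrary middle $N-2$ letters; this yields $(\ell+1)^{(N-2)m}$ distinct candidates. Every length-$N$ window of such $w$ either lies inside a single $v_j$ (which contains both a $0$ and an $\ell$) or straddles $v_j$ and $v_{j+1}$, in which case it contains position $N$ of $v_j$ (an $\ell$) together with position $1$ of $v_{j+1}$ (a $0$); in either case the window is neither $0^N$ nor $\ell^N$. Each such $w$ extends to a biinfinite element of $Y$ by gluing $(0^{N-1}\ell)^{\infty}$ periodically on both sides, and a straightforward window check at the junction confirms that this extension still avoids $0^N$ and $\ell^N$. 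Definition~\ref{topent} then yields $h(X_{\alpha,\beta}) \geq h(Y) \geq \tfrac{1}{Nm}\log(\ell+1)^{(N-2)m} = \tfrac{N-2}{N}\log(\ell+1)$.

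The main point requiring care is the boundary analysis: forcing just one endpoint of each block $v_j$ would yield the larger count $(\ell+1)^{(N-1)m}$ per length $Nm$, but would permit a straddling window such as $v_j^{(2)} \cdots v_j^{(N)} v_{j+1}^{(1)}$ to become $0^N$ or $\ell^N$. Pinning both the first and last letters of each block is precisely what blocks this failure and explains why the exponent $(N-2)/N$ (rather than $(N-1)/N$) appears in the stated bound.
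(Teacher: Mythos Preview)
Your proof is correct and follows essentially the same route as the paper's: both fix two letters in every length-$N$ block (you pin the first and last letters to $0$ and $\ell$; the paper pins positions $Ni$ and $Ni+1$ to $0$ and $\ell$), giving $(\ell+1)^{(N-2)m}$ words of length $Nm$ and hence the bound. The only cosmetic difference is that you pass through the forbidden list of Lemma~\ref{twowords} and the auxiliary SFT $Y$, while the paper appeals directly to the lexicographic description~(\ref{abdef}) to see that any sequence whose $N$-windows all contain both a $0$ and an $\ell$ already lies in $X_{\alpha,\beta}$.
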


\begin{proof}
Fix $N$ and assume that $\textbf{a}$ begins with $N$ $0$s and $\textbf{b}$ begins with $N$ $\ell$s. Consider any sequence $x \in \{0, \ldots, \ell\}^{\mathbb{Z}}$ with the property that $x(Ni) = 0$ and $x(Ni + 1) = \ell$ for all $i \in \mathbb{Z}$. Then it is clear that every $N$-letter subword of $x$ contains both a $0$ and an $\ell$. By the assumed properties of $\textbf{a}$ and $\textbf{b}$, for every $n$, $x_n x_{n+1} x_{n+2} \ldots$ is lexicographically between $\textbf{a}$ and $\textbf{b}$, and so $x \in X_{\alpha, \beta}$. Since all such $x$ are in $X_{\alpha, \beta}$, $|\mathcal{L}_{Nk}(X_{\alpha, \beta})| \geq (\ell + 1)^{(N-2)k}$. Taking logarithms, dividing by $Nk$, and letting $k \rightarrow \infty$ yields the desired lower bound on $h(X_{\alpha, \beta})$.
\end{proof}

\begin{theorem}\label{hardbeta}
For every $\ell$, there exists $N$ 
so that if $\textbf{a}$ begins with $N$ $0$s and $\textbf{b}$ begins with $N$ $\ell$s, then $X_{\alpha, \beta}$ satisfies the conclusions of Theorems~\ref{mainthm2} and \ref{gibbsthm} (with $G' = G'_{\alpha, \beta}$).
\end{theorem}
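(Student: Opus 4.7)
The plan is to verify the hypotheses of Theorems~\ref{mainthm0} and \ref{gibbsthm0} directly for $X_{\alpha,\beta}$ (with $G = G_{\alpha,\beta}$, $G' = G'_{\alpha,\beta}$, and $R = 0$), rather than quoting Theorems~\ref{mainthm2} or \ref{gibbsthm}; the latter require $|\mathcal{A}| \geq 4$ since they invoke $3/|\mathcal{A}| < 1$, whereas here $|\mathcal{A}| = \ell + 1$ may be as small as $2$. The two facts that drive everything are Lemma~\ref{twowords}, which gives $|(\mathcal{F}_{\alpha,\beta})_n| \leq 2\ell$ and $(\mathcal{F}_{\alpha,\beta})_n = \varnothing$ for $n \leq N$, and Lemma~\ref{betaent}, which gives $h := h(X_{\alpha,\beta}) \geq \tfrac{N-2}{N}\log(\ell+1)$.

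For $\ell = 0$ the subshift is a single point and every conclusion is trivial, so assume $\ell \geq 1$. Fix $\ell$ and take $N$ large. In Theorem~\ref{Lcombbd} we set $\beta^* = \tfrac{N-5}{N}\log(\ell+1)$, which satisfies $\beta^* < 2h - \log|\mathcal{A}|$ for $N \geq 6$; the resulting series $\sum_{n > N} 2\ell\, n\, (\ell+1)^{-n(N-5)/N}$ is a polynomially-weighted geometric tail and tends to $0$ as $N \to \infty$. In Theorems~\ref{concat} and \ref{concat'} we take $c = (1+\delta)/(\ell+1)$ for a small fixed $\delta \in (0,\ell)$, so that $c > |\mathcal{A}|^{-1}$ and $|\mathcal{A}|c - 1 = \delta$; then both $\sum_{n > N} 2\ell\, c^n$ and $\sum_{n > N} 2\ell\, n\, c^{n/3}$ (respectively $c^{n/4}$) are tails of convergent series and shrink to $0$. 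In Theorems~\ref{Gmeascor} and \ref{G'meascor} we use $\alpha^* = \tfrac{N-3}{N}\log(\ell+1) < h$; then $1 - e^{-\alpha^*}$ stays bounded below by a positive constant depending only on $\ell$, and $\sum_{n > N} 2\ell\, n^2 (\ell+1)^{-n(N-3)/(3N)}$ (and its $n/4$ analogue) are again decaying tails. Once $N$ is chosen large enough that all of these fall below the required thresholds, Theorem~\ref{mainthm0} yields uniqueness of the MME $\mu$, Theorem~\ref{gibbsthm0} gives the lower Gibbs bound on $G'_{\alpha,\beta}$, and Theorem~\ref{easygibbs} supplies the matching upper bound on all of $\mathcal{L}(X_{\alpha,\beta})$.

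To upgrade uniqueness to the K-property, we mirror the product-system argument at the end of the proof of Theorem~\ref{mainthm2}: by Ledrappier's result it suffices to show that $X_{\alpha,\beta} \times X_{\alpha,\beta}$ has a unique MME, which we obtain from Theorem~\ref{mainthm0} applied with $G^{(2)} = G_{\alpha,\beta} \times G_{\alpha,\beta}$. Hypotheses (a) and (b) of that theorem carry over directly from $X_{\alpha,\beta}$; for (c) we invoke Theorem~\ref{Gmeasbd} on the product, using $|\mathcal{H}^{(2)}_n| \leq 2|\mathcal{H}_n|(\ell+1)^n$ together with $\mathcal{H}^{(2)}_n = \varnothing$ for $n < (N+1)/3$, and the choice $\alpha^*_2 = \tfrac{2N-5}{N}\log(\ell+1) < 2h = h(X_{\alpha,\beta} \times X_{\alpha,\beta})$. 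The resulting series is once more a polynomially-weighted exponential tail starting past $N/3$, and shrinks to $0$ as $N$ grows.

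The only real obstacle is keeping all the inequalities simultaneously satisfiable when $\ell$ is smallest (notably $\ell = 1$, $|\mathcal{A}| = 2$), where there is little room to arrange $c > |\mathcal{A}|^{-1}$ together with $c^{1/4} < 1$ and to arrange $\alpha^* < h < \log(\ell+1)$. What saves us is that $|\mathcal{F}_n| \leq 2\ell$ is bounded \emph{independently of $n$}, while every decay factor has the form $(\ell+1)^{-an}$ with some fixed $a > 0$; consequently each relevant tail has the shape $\sum_{n > N} \mathrm{poly}(n)\, r^n$ with a fixed $r < 1$ and tends to $0$ as $N \to \infty$, regardless of how close $r$ is to $1$. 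Hence for each fixed $\ell$, all threshold inequalities are satisfied once $N$ is chosen large enough.
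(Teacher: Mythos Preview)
Your proof is correct and follows essentially the same approach as the paper: use Lemmas~\ref{twowords} and \ref{betaent} to get $|\mathcal{F}_n|\le 2\ell$, $\mathcal{F}_n=\varnothing$ for $n\le N$, and $h(X_{\alpha,\beta})$ close to $\log(\ell+1)$, then verify the hypotheses of Theorems~\ref{Lcombbd}, \ref{concat}/\ref{concat'}, and \ref{Gmeascor}/\ref{G'meascor} directly (and of Theorem~\ref{Gmeasbd} on the product for the K-property) by observing that every relevant series is a polynomially-weighted geometric tail that vanishes as $N\to\infty$. The only cosmetic difference is that the paper fixes its parameters once and for all (e.g.\ $\beta_0=\tfrac16\log(\ell+1)$, $c=\tfrac23$, $\alpha=\beta_0$) while you let $\beta^*,\alpha^*,\alpha^*_2$ drift with $N$; both choices work for the same reason, and your observation that the $N$-dependent ratios stay uniformly bounded away from $1$ handles the only subtlety this introduces.
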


\begin{proof}
Fix any $\ell$. For brevity, we denote $X = X_{\alpha, \beta}$, $\mathcal{F} = \mathcal{F}_{\alpha, \beta}$, 
$\mathcal{H} = \mathcal{H}_{\alpha, \beta}$, $G = G_{\alpha, \beta}$, $\mathcal{H}' = \mathcal{H}'_{\alpha, \beta}$, and $G' = G'_{\alpha, \beta}$. By Lemma~\ref{twowords}, we know that $|\mathcal{F}_n| \leq 2\ell$ for all $n$ and $\mathcal{F}_n = \varnothing$ for $n \leq N$.
By Lemma~\ref{betaent}, we can force $h(X) \geq \frac{3}{5} \log (\ell + 1)$ by taking $N \geq 5$. Define $\beta_0 = 
\frac{1}{6} \log(\ell + 1) < 2h(X) - \log (\ell + 1)$, and define $c = \frac{2}{3}$. Note that $\beta_0 < h(X) - (\log(\ell + 1) - h(X)) \leq h(X)$ also. Now, note that all of the infinite series
\[
\sum_{n = 1}^{\infty} n |\mathcal{F}_n| e^{-n\beta_0}, \sum_{n = 1}^{\infty} n |\mathcal{F}_n| c^{n/3}, \sum_{n = 1}^{\infty} |\mathcal{F}_n| c^n, \sum_{n = 1}^{\infty} n^2 |\mathcal{F}_n| e^{-(n/3)\beta_0}
\]
converge, since $|\mathcal{F}_n|$ is bounded as $n \rightarrow \infty$. Then, it is clear that by taking $N$ large enough, we can force each of these series to be arbitrarily small (since each will become $0$ for $n \leq N$). Therefore, for large enough $N$, the hypotheses of Theorems~\ref{Lcombbd}, \ref{concat}, and \ref{Gmeascor} are satisfied (by taking $\beta = \beta_0$, $c = \frac{2}{3}$, and $\alpha = \beta_0$ respectively); note that as $N$ increases, the lower bound on $h(X)$ in Lemma~\ref{betaent} increases, and so the required fact that $\beta = \beta_0 < 2h(X) - \log |\mathcal{A}|$ (implying that
$\alpha = \beta_0 < h(X)$) will remain true. Existence of a unique MME $\mu$ now follows from Theorem~\ref{mainthm0}. 

The proof that $X \times X$ has the unique MME $\mu \times \mu$ is almost the same as in the proof of Theorem~\ref{mainthm2}. 
Again, all hypotheses of Theorem~\ref{mainthm0} carry over immediately for $X \times X$ except for the fact that
$\nu(G^{(2)}_n)$ is bounded away from $0$ (for $G^{(2)}_n = G_n \times G_n$) along a syndetic set for every ergodic MME $\nu$ on $X \times X$.

As before, $h(X \times X) = 2h(X)$, and if we take $\alpha_0 = \beta_0 + \log |\mathcal{A}|$, then $\alpha_0 < h(X \times X)$, and 
\begin{multline*}
\sum_{i = 1}^{\infty} |\mathcal{H}^{(2)}_i| e^{-i\alpha_0} \leq
\sum_{i = 1}^{\infty} 2|\mathcal{H}_i| |\mathcal{A}|^i e^{-i\alpha_0} = 
\sum_{i = 1}^{\infty} 2|\mathcal{H}_i| e^{-i\beta_0}\\
\leq 2 \sum_{i = 1}^{\infty} e^{-i\beta_0} \sum_{n = i}^{3i} (n-i+1) |\mathcal{F}_n| 
\leq 2 \sum_{n = 1}^{\infty} n^2 |\mathcal{F}_n| e^{-(i/3)\beta_0}.
\end{multline*}
As argued above, if $N$ is sufficiently large, this sum will be as small as desired (in particular less than $1 - e^{-\alpha_0}$), allowing for Theorem~\ref{Gmeascor} to be applied. Then, the K-property for $\mu$ via uniqueness of MME on $X \times X$ follows from Theorem~\ref{mainthm0}.

It remains only to prove the Gibbs bounds of Theorem~\ref{gibbsthm} for $\mu$ and $G'$. The infinite series
\[
\sum_{n = 1}^{\infty} |\mathcal{F}_n| c^n, \sum_{n = 1}^{\infty} n |\mathcal{F}_n| c^{n/4}, \sum_{n = 1}^{\infty} n^2 |\mathcal{F}_n| e^{-(n/4)\beta_0}
\]
again clearly converge (because $|\mathcal{F}_n| \leq 2\ell$), and so again we can make them arbitrarily small by taking $N$ large enough. This allows the use of Theorems~\ref{concat'} and \ref{G'meascor}, which implies the desired result by Theorems~\ref{gibbsthm0} and \ref{easygibbs}.

\end{proof}

The following corollary is almost immediate.

\begin{theorem}\label{hardbetacor}
For every $\ell$, there exists $\epsilon > 0$ so that if $\alpha < \epsilon$, $\beta > \ell - \epsilon$, and $\alpha + \beta < \ell$, then $X_{\alpha, \beta}$ satisfies the conclusions of Theorems~\ref{mainthm2} and \ref{gibbsthm} (with $G' = G'_{\alpha, \beta}$).
\end{theorem}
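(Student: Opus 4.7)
The plan is to reduce Theorem~\ref{hardbetacor} directly to Theorem~\ref{hardbeta} by showing that, for sufficiently small $\epsilon$, the geometric hypotheses force $\textbf{a} = \Phi(0)$ to begin with many $0$s and $\textbf{b} = \lim_{t \to 1^-}\Phi(t)$ to begin with many copies of the top letter. Under the corollary's hypotheses with $\epsilon < 1$, we have $\alpha + \beta \in (\ell - 1, \ell)$, so $\lceil \alpha + \beta \rceil = \ell$ and the alphabet is $\{0, 1, \ldots, L\}$ with top letter $L := \ell - 1$. Given $\ell$, apply Theorem~\ref{hardbeta} with its parameter equal to $L$ to obtain an integer $N = N(\ell)$ such that any $X_{\alpha, \beta}$ for which $\textbf{a}$ begins with $N$ $0$s and $\textbf{b}$ begins with $N$ $L$s satisfies the conclusions of Theorems~\ref{mainthm2} and \ref{gibbsthm}. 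It will then suffice to choose $\epsilon$ small enough (depending only on $\ell$ and $N$) to force both of these combinatorial conditions.

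For the orbit of $0$ under $T(x) = \alpha + \beta x \pmod 1$: inductively, so long as $T^k(0) \in J_0 = [0, \frac{1-\alpha}{\beta})$ for all $k < n$, no modular reduction occurs, so
\[
T^n(0) = \alpha(1 + \beta + \beta^2 + \cdots + \beta^{n-1}) = \alpha \cdot \frac{\beta^n - 1}{\beta - 1}.
\]
This quantity remains below $\frac{1-\alpha}{\beta}$ for all $n \leq N$ precisely when $\alpha < \frac{\beta - 1}{\beta^N - 1}$, a threshold bounded below by a positive constant depending only on $\ell$ and $N$ (since $\ell - 1 \leq \beta - 1$ up to $O(\epsilon)$ and $\beta^N - 1 \leq \ell^N$). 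So taking $\epsilon$ smaller than this bound ensures $\textbf{a}$ begins with $N$ $0$s.

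The orbit of $1^-$ is handled symmetrically. Writing $\delta_n := 1 - T^n(1^-)$ and $\eta := \ell - \alpha - \beta \in (0,\epsilon)$, as long as $T^k(1^-) \in J_L$ for $k < n$ one checks $T^{n+1}(1^-) = (\alpha + \beta - L) - \beta \delta_n = (1 - \eta) - \beta \delta_n$, so $\delta_{n+1} = \eta + \beta \delta_n$ and hence $\delta_n = \eta \cdot \frac{\beta^n - 1}{\beta - 1}$. The condition $T^n(1^-) \in J_L = [\frac{L - \alpha}{\beta}, 1)$ translates to $\delta_n \leq \frac{\beta + \alpha - L}{\beta} = \frac{1 - \eta}{\beta}$, and algebra parallel to the $\textbf{a}$ case reduces this (for all $n \leq N$) to $\eta \leq \frac{\beta - 1}{\beta^N - 1}$. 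Since $\eta < \epsilon$, both orbit conditions are simultaneously ensured by choosing $\epsilon$ less than the common threshold, at which point Theorem~\ref{hardbeta} delivers the conclusion. The only subtlety is the induction bookkeeping to justify that no mod-$1$ wrap-around spoils the explicit orbit formulas during the first $N$ iterates, but this is exactly what the inductive argument establishes.
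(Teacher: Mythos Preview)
Your proof is correct and takes essentially the same approach as the paper, which simply asserts (without the explicit orbit computations you provide) that small $\epsilon$ forces $\textbf{a}$ and $\textbf{b}$ to begin with arbitrarily many copies of the smallest and largest alphabet letters, reducing directly to Theorem~\ref{hardbeta}. Your version supplies the details the paper omits, and is in fact slightly more careful in observing that under the corollary's constraints the top letter is $L=\ell-1$, so that Theorem~\ref{hardbeta} is applied with parameter $L$ rather than $\ell$.
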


\begin{proof}
This is an immediate consequence of Theorem~\ref{hardbeta}, given the observation that by definition of $\textbf{a}$ and $\textbf{b}$, taking $\epsilon$ very small will force $\textbf{a}$ and $\textbf{b}$ to begin with arbitrarily many $0$s and $\ell$s respectively.
\end{proof}

\begin{remark} 
We note that Hofbauer proved in \cite{hofbauerPM} that any piecewise increasing map on the interval (including $T: x \mapsto \alpha + \beta x \pmod 1$) which is topologically transitive is also intrinsically ergodic.
It is also known (see \cite{glend}) that binary $\alpha$-$\beta$ shifts of entropy at least $\sqrt{2}$ are always topologically transitive, which means that the intrinsic ergodicity portion of Theorem~\ref{hardbeta} is not new for $\ell = 1$ (and likely for larger $\ell$ as well, though we do not have a reference).

However, we note that one can consider the more general class of $a$-$b$ shifts (see \cite{ganAB}) defined via lexicographic lower/upper bounds $a$/$b$ in the sense of (\ref{abdef}). Theorem~\ref{hardbeta} holds with no changes for this class, and it is not hard to check that there exist $a$-$b$ shifts satisfying the hypotheses of Theorems~\ref{hardbeta} and \ref{hardbetacor} which are not transitive. For instance, for any $\ell$ and $n$, one can define 
\[
a = .0^n \ell^n 0^n \ell^n \ldots, b = .\ell^n 0^n \ell^n 0^n \ldots,
\]
and in this $a$-$b$ shift, the only sequence with $0^n$ after the decimal point is $a$ itself, which clearly precludes topological transitivity. Therefore, for $a$-$b$ shifts, our results can yield intrinsic ergodicity for a different reason than the results of \cite{hofbauerPM}. 
\end{remark}

\subsection{Bounded density shifts}

We first recall the definition of the bounded density subshifts of \cite{stanley}. 

\begin{definition}\label{bdd}
For any $\mathcal{A}_k = \{0, 1, \ldots, k\}$ and increasing subadditive $h: \mathbb{N} \rightarrow \mathbb{N}$, the \textbf{bounded density shift} associated to $k$ and $h$, denoted $X_{k,h}$, is the subshift defined by $\mathcal{F}_{k,h} = \bigcup_{n = 1}^{\infty} \{w \in \mathcal{A}^n \ : \ w_1 + \ldots + w_n > h(n)\}$.
\end{definition}

We also define a slightly more general version of bounded density shifts where the sum of the letters in a word is bounded from above and below.

\begin{definition}\label{bdd+}
For any $\mathcal{A}^{\pm}_k = \{-k, \ldots, -1, 0, 1, \ldots, k\}$ and increasing subadditive $h: \mathbb{N} \rightarrow \mathbb{N}$, the 
\textbf{signed bounded density shift} associated to $k$ and $h$, denoted $X^{\pm}_{k,h}$, is the subshift defined by 
$\mathcal{F}^{\pm}_{k,h} = \bigcup_{n = 1}^{\infty} \{w \in \mathcal{A}^n \ : \ w_1 + \ldots + w_n > h(n)\} \cup 
\bigcup_{n = 1}^{\infty} \{w \in \mathcal{A}^n \ : \ w_1 + \ldots + w_n < -h(n)\}$.
\end{definition}

When $h$ is a constant function, $\mathcal{A}^{\pm}_k$ is quite similar to the so-called charge-constrained shift (see Example 1.2.7 in \cite{LM} for more details). By subadditivity, for any (signed) bounded density shift, $h(n)/n$ approaches some constant $\alpha$ (called the gradient), and $h(n) \geq n\alpha$ for all $n$. 

A useful property of bounded density shifts is that they are hereditary (defined in \cite{kerrli}), meaning that whenever a letter in a point in the shift is replaced by a smaller letter, the resulting point is still in the shift. Hereditary shifts are known to have many useful properties (see \cite{PR}, \cite{dom1}, \cite{weisshered}, and \cite{dom2}), and so it's not surprising that they often have unique MME. However, signed bounded density shifts have no obvious hereditary properties. Nevertheless, we will be able to apply our results to them as well. We will need the following simple lemma.

\begin{lemma}\label{zeroout}
Any $w$ on $\mathcal{A}_k$ (or $\mathcal{A}^{\pm}_k$) which does not contain a word from the forbidden list $\mathcal{F}_{k,h}$ (or $\mathcal{F}^{\pm}_{k,h}$)
is in the language of $X_{k,h}$ (or $X^{\pm}_{k,h}$).
\end{lemma}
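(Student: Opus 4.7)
The plan is to show directly that any such $w$ extends to a biinfinite point of the subshift by padding with zeros on both sides. Specifically, given $w$ on $\mathcal{A}_k$ (respectively $\mathcal{A}_k^{\pm}$) which contains no word from $\mathcal{F}_{k,h}$ (respectively $\mathcal{F}_{k,h}^{\pm}$), I will consider the point
\[
x := \ldots 000\,w\,000\ldots \in \mathcal{A}_k^{\mathbb{Z}} \text{ (or $(\mathcal{A}_k^{\pm})^{\mathbb{Z}}$)},
\]
with $w$ placed at positions $1,\ldots,|w|$ and $0$s filling the rest. Since $x$ contains $w$ as a subword starting at position $1$, showing $x \in X_{k,h}$ (respectively $X_{k,h}^{\pm}$) will immediately give $w \in \mathcal{L}(X_{k,h})$ (respectively $\mathcal{L}(X_{k,h}^{\pm})$).

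The main verification is that $x$ contains no forbidden word. Any subword $u'$ of $x$ of length $n$ has the form $0^a u 0^b$ with $a + |u| + b = n$ and $u$ a (possibly empty) subword of $w$. Since the added letters contribute $0$ to the sum,
\[
\sum u'_i = \sum u_i.
\]
Because $u$ is a subword of $w$, which by hypothesis contains no word of $\mathcal{F}_{k,h}$ (resp. $\mathcal{F}_{k,h}^{\pm}$), we have $\sum u_i \leq h(|u|)$ (and in the signed case also $\sum u_i \geq -h(|u|)$). Monotonicity of $h$ then gives $|\sum u'_i| \leq h(|u|) \leq h(n)$, so $u'\notin \mathcal{F}_{k,h}$ (resp. $u' \notin \mathcal{F}_{k,h}^{\pm}$).

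There is essentially no obstacle here; the lemma is a straightforward consequence of the fact that the letter $0$ contributes nothing to partial sums together with monotonicity of $h$. The only minor point worth flagging is that the argument uses $h$ being increasing (which is part of the standing assumption in Definitions~\ref{bdd} and \ref{bdd+}) to pass from the bound on $|u|$ to the bound on $n$; subadditivity is not needed for this lemma.
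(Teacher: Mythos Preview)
Your proof is correct and is essentially identical to the paper's own argument: the paper also extends $w$ to $0^{\infty} w 0^{\infty}$, observes that every length-$n$ subword consists of some subword of $w$ of length $m$ padded by $0$s so that its sum has absolute value at most $h(m) \leq h(n)$ by monotonicity, and concludes. Your remark that only monotonicity (not subadditivity) of $h$ is used is accurate and matches the paper's reasoning.
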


\begin{proof}
We will show for any such $w$, the sequence $0^{\infty} w 0^{\infty}$ is in the relevant subshift. To see this, note that every subword of $0^{\infty} w 0^{\infty}$ of length $n$ consists of a subword of $w$ of some length $m$ (which has sum with absolute value less than or equal to $h(m)$ by assumption) and some $0$s, and so its sum has absolute value less than or equal to $h(m)$ as well, which must be less than or equal to $h(n)$ since $h$ is increasing. No subword of $0^{\infty} w 0^{\infty}$ is in the relevant forbidden list, and so $0^{\infty} w 0^{\infty}$ is in the relevant subshift, implying that $w$ is in the language and completing the proof.
\end{proof} 

To our knowledge, nothing regarding uniqueness of MME or properties of a unique MME has been proved for any bounded density shifts except when $X_{k,h}$ satisfies some specification (or weakened specification) property, which implies uniqueness by the main results of \cite{bowen} or \cite{gapspec}. This requires $h(n) - n\alpha$ to have sublogarithmic growth, and so treats a very narrow class of possible $k$ and $h$. 

In contrast, we will be able to prove these properties whenever $h(n)$ is above a certain constant times $kn$. When $h(n)$ is close to $kn$, the forbidden list is fairly `small' (since very few words of length $n$ have sum with absolute value near $kn$), and so we could apply Theorems~\ref{mainthm2} and \ref{gibbsthm} immediately to derive uniqueness/K-property/Gibbs bounds. However, it turns out we can directly prove relevant properties about $G_{k,h}$ (or $G^{\pm}_{k,h}$) in this setting, and so using Theorems~\ref{mainthm0}, \ref{gibbsthm0}, and \ref{easygibbs} will be significantly more efficient. 

We define the sets $\mathcal{H}_{k,h}$ (and $\mathcal{H}^{\pm}_{k,h}$) of heavy subwords of $\mathcal{F}_{k, h}$ (and $\mathcal{F}^{\pm}_{k,h}$) as in Section~\ref{defg}, i.e. subwords of words in $\mathcal{F}_{k,h}$ of length at least one-third that of the forbidden word. Define $G_{k,h}$ (and $G^{\pm}_{k,h}$) as in Section~\ref{defg}, i.e. words which do not begin or end with a heavy subword. We will not need a separate definition for $G'$ here using one-fourth of forbidden words, because we will be able to use Lemma~\ref{zeroout} to show that these sets already have the stronger threefold concatenation property usable for Theorem~\ref{gibbsthm0}.

\begin{theorem}\label{bddthm}
If $k > 9e$, $h(n) = nk$ for $n < 11$ and $h(n) > nk(1 - \frac{1}{9e})$ for all $n$, then $X_{k,h}$ satisfies the conclusions of Theorems~\ref{mainthm2} and \ref{gibbsthm}
(with $G' = G_{k,h}$).
\end{theorem}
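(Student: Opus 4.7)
The plan is to directly verify the hypotheses of Theorems~\ref{mainthm0}, \ref{gibbsthm0}, and \ref{easygibbs} (with $R = 0$ and $G = G' = G_{k,h}$) rather than invoking Theorems~\ref{mainthm2} and \ref{gibbsthm}. The crucial structural facts are that by Lemma~\ref{zeroout} a word lies in $\mathcal{L}(X_{k,h})$ iff it contains no subword from $\mathcal{F}_{k,h}$, and that $\mathcal{F}_{k,h}$ is combinatorially small: for $n \geq 11$, $w \in \mathcal{F}_n$ is equivalent to $\sum (k - w_i) < nk/(9e)$, so a stars-and-bars count gives $|\mathcal{F}_n| \leq \binom{n + \lfloor nk/(9e) \rfloor}{n} \leq (e + k/9)^n$. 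An analogous argument, using that a heavy subword lies in some $f \in \mathcal{F}$ with $|f| \leq 3n$, shows that any $w \in (\mathcal{H}_{k,h})_n$ satisfies $\sum (k - w_i) < nk/(3e)$, yielding $|(\mathcal{H}_{k,h})_n| \leq (e + k/3)^n$; moreover $(\mathcal{H}_{k,h})_n = \varnothing$ for $n \leq 3$ since $\mathcal{F}_m = \varnothing$ for $m < 11$.

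I would then establish the lower bound $h(X_{k,h}) \geq \log(k_0 + 1) > \log(k(1 - 1/(9e)))$, where $k_0 = \lfloor k(1 - 1/(9e)) \rfloor$, by noting that any word on the sub-alphabet $\{0, 1, \ldots, k_0\}$ has each length-$m$ subword with sum at most $m k_0 < h(m)$, hence lies in $\mathcal{L}(X_{k,h})$ by Lemma~\ref{zeroout}. For $k > 9e$ this places $h(X_{k,h})$ within a small additive constant of $\log k$, while the base $e + k/9$ in the $|\mathcal{F}_n|$ bound stays well below $k$. Choosing $\beta$ slightly below $2h(X_{k,h}) - \log(k+1) \approx \log k$, the ratio $(e + k/9)e^{-\beta}$ is uniformly bounded below $1$, and since $|\mathcal{F}_n| = 0$ for $n < 11$ the tail $\sum_{n \geq 11} n |\mathcal{F}_n|e^{-n\beta}$ is dominated by its first term and trivially satisfies the $1/36$ hypothesis of Theorem~\ref{Lcombbd}, giving bounded supermultiplicativity.

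The threefold concatenation of $G_{k,h}$ is the key combinatorial step. Given $u, v, w \in G_{k,h}$ and a supposed $f \in \mathcal{F}_{k,h}$ of length $m$ occurring in $uvw$, observe that since $u, v, w \in \mathcal{L}(X)$, $f$ must cross a boundary. If it crosses only one (say the $u$-$v$ boundary), decomposing $f = f_1 f_2$ with $|f_1| + |f_2| = m$, at least one piece has length $\geq m/2 \geq m/3$ and occurs as a heavy suffix of $u$ or prefix of $v$, contradicting membership in $G_{k,h}$. If $f$ spans both boundaries, writing $f = f_1 v f_2$, the requirements $|f_1|, |f_2| < m/3$ (forced by $u, w \in G_{k,h}$) give $|v| > m/3$, so $v$ is itself a heavy subword of $f$ and thus has itself as a (heavy) prefix, again contradicting $v \in G_{k,h}$. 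Thus $uvw$ contains no forbidden subword and by Lemma~\ref{zeroout} lies in $\mathcal{L}(X_{k,h})$.

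Finally, I would apply Theorem~\ref{Gmeasbd} with $\alpha = \log(k_0 + 1)$: for $k > 9e$ the ratio $(e + k/3)/(k_0 + 1)$ is bounded well below $1$ (tending to $1/3$ as $k \to \infty$), so $\sum_{n \geq 4} |(\mathcal{H}_{k,h})_n| e^{-n\alpha}$ is a small geometric tail, easily less than $1 - e^{-\alpha}$. Combined with the preceding steps, Theorem~\ref{mainthm0} delivers uniqueness of the MME $\mu$, and rerunning the verification on $X_{k,h} \times X_{k,h}$, using the elementary estimate $|\mathcal{H}^{(2)}_n| \leq 2 |(\mathcal{H}_{k,h})_n| (k+1)^n$ together with $h(X \times X) = 2h(X)$, yields the K-property via Ledrappier's result just as in the proof of Theorem~\ref{mainthm2}. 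Theorem~\ref{gibbsthm0} then supplies the lower Gibbs bound on $G_{k,h}$ and Theorem~\ref{easygibbs} the upper Gibbs bound throughout $\mathcal{L}(X_{k,h})$. The main obstacle is the bookkeeping needed to verify that all the geometric-series estimates survive uniformly down to $k$ slightly larger than $9e$; the threshold $k > 9e$ is essentially tight in making the key ratios $(e + k/9)/k$ and $(e + k/3)/(k(1 - 1/(9e)))$ fall below $1$.
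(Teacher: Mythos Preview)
Your proposal is correct and follows essentially the same route as the paper: the same entropy lower bound via the full shift on $\{0,\ldots,\lfloor k(1-1/(9e))\rfloor\}$, the same stars-and-bars bounds $|\mathcal{F}_n|\le (e(1+kB))^n$ and $|\mathcal{H}_n|\le (e(1+3kB))^n$ with $B=1/(9e)$, the same use of Lemma~\ref{zeroout} for the (threefold) concatenation property of $G_{k,h}$, and the same invocation of Theorems~\ref{Lcombbd}, \ref{Gmeasbd}, \ref{mainthm0}, \ref{gibbsthm0}, \ref{easygibbs}, and Ledrappier for the K-property. Your case analysis for why $uvw$ contains no forbidden word is in fact more explicit than the paper's one-line ``by definition''; the only caveat is that your closing remark about $k>9e$ being ``essentially tight'' for the ratios $(e+k/9)/k$ and $(e+k/3)/(k(1-1/(9e)))$ to fall below $1$ is overstated, since those ratios are already well below $1$ at $k=9e$ (the paper's actual constraint is the sharper inequality $e(1+3kB)(k+1)/(k^2(1-B)^2)<1/2$).
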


\begin{proof}

Define $B = \frac{1}{9e}$. The reader may check that this implies that, for $k > 9e$,
\begin{equation}\label{bfact}
\frac{e(1+3kB)(k+1)}{k^2(1-B)^2} < \frac{1}{2}.
\end{equation}

For convenience, we will abbreviate $\mathcal{A} = \mathcal{A}_k$, $X = X_{k,h}$, $\mathcal{F} = \mathcal{F}_{k,h}$, $\mathcal{H} = \mathcal{H}_{k,h}$, and $G = G_{k,h}$. We must verify the hypotheses of Theorems~\ref{mainthm0}, \ref{gibbsthm0}, and \ref{easygibbs}. First, we note for future reference that by definition, the full shift on $\{0, \ldots, 1, \lfloor k(1 - B) \rfloor\}$ is contained in $X$ by definition, and therefore
\begin{equation}\label{bdentbd}
h(X) \geq \log(1 + \lfloor k(1-B) \rfloor) > \log(k(1-B)).
\end{equation}

Next, note that for $n < 11$, $\mathcal{F}_n = \varnothing$ by definition. For $n \geq 11$, by definition, 
(recall that $h(n) > nk(1 - B)$),
\[
\mathcal{F}_n = \{w \in \mathcal{A}^n \ : \ w_1 + \ldots + w_n > h(n)\} \subset 
\{w \in \mathcal{A}^n \ : \ w_1 + \ldots + w_n > nk(1 - B)\}.
\]

Therefore,
\begin{multline}\label{bddFbd}
|\mathcal{F}_n| \leq 
|\{w \in \mathcal{A}^n \ : \ w_1 + \ldots + w_n > nk(1 - B)\}| \\
= |\{w \in \mathcal{A}^n \ : \ w_1 + \ldots + w_n < nkB\}| \leq {n(1 + kB) \choose n} \leq (e(1 + kB))^n.
\end{multline}
(Here, the equality is due to the obvious permutation on $\mathcal{A}^n$ given by subtracting letters from $k$ coordinatewise, the first inequality is due to the fact that nonnegative integer partitions of positive integers less than or equal to $nkB$ are in bijective correspondence with ways of placing $n$ ``dividing lines'' in a row of $nkB$ objects, and the second inequality is a well-known upper bound on binomial coefficients.)

By (\ref{bdentbd}), $2h(X) - \log |A| > 2\log (k(1-B)) - \log (k+1) = \log \left( \frac{k^2(1-B)^2}{k+1} \right)$, and we can take $\beta = \log \left( \frac{k^2(1-B)^2}{k+1} \right)$ for Theorem~\ref{Lcombbd}. The relevant infinite series is then 
\begin{multline*}
\sum_{n = 1}^{\infty} n |\mathcal{F}_n| e^{-\beta n} \leq
\sum_{n = 11}^{\infty} n \left(\frac{(e(1+kB)(k+1)}{k^2(1-B)^2}\right)^n \\ <
\sum_{n = 11}^{\infty} n \left(\frac{(e(1+3kB)(k+1)}{k^2(1-B)^2}\right)^n <
\sum_{n = 11}^{\infty} n 2^{-n} < 1/36.
\end{multline*}
(Here, the first inequality uses (\ref{bddFbd}) and the final inequality uses (\ref{bfact}).) Therefore, $X$ satisfies the conclusion of Theorem~\ref{Lcombbd}.

By definition, any concatenation $uvw$ of $u,v,w \in G$ cannot contain any word from $\mathcal{F}$, and so by Lemma~\ref{zeroout}, all such words are in $\mathcal{L}(X)$, verifying both specification properties for $G$ required in Theorems~\ref{mainthm0} and \ref{gibbsthm0} (for $R = 0$).

We now estimate the sizes of $\mathcal{H}_n$ in order to apply Theorem~\ref{Gmeasbd}. Since every forbidden word $w$ of length $n$ satisfies $w_1 + \ldots w_n > h(n) \Longrightarrow w_1 + \ldots + w_n > n\alpha > nk(1 - B)$, it is easily checked that every heavy subword $u$ of $w$ with length $k \geq n/3$ satisfies $u_1 \ldots + u_k > nk(1 - 3B)$. Since $\mathcal{F}_n = \varnothing$ for $n < 11$, by definition $\mathcal{H}_n = \varnothing$ for $n < 4$. For $n \geq 4$, an estimate virtually identical to that of (\ref{bddFbd}) shows that
\[
|\mathcal{H}_n| \leq (e(1 + 3kB))^n.
\]

Now, by (\ref{bdentbd}), we may take $\alpha = \log(k(1 - B))$ in Theorem~\ref{Gmeasbd}; the relevant infinite series is
\begin{multline*}
\sum_{n = 1}^{\infty} |\mathcal{H}_n| e^{-\alpha n} \leq
\sum_{n = 4}^{\infty} \left(\frac{e(1+3kB)}{k(1-B)}\right)^n \\ <
\sum_{n = 4}^{\infty} \left(\frac{e(1+3kB)(k+1)}{k^2(1-B)^2}\right)^n < 
\sum_{n = 4}^{\infty} (1/2)^{n} = \frac{1}{8}.
\end{multline*}
(Again, the final inequality comes from (\ref{bfact}).) Since $1 - e^{-\alpha} = 1 - \frac{1}{k(1-B)} > 1/8$, $G$ satisfies the conclusion of Theorem~\ref{Gmeasbd}. Now, uniqueness of the MME $\mu$ and upper and lower Gibbs bounds on $G$ follow from Theorems~\ref{mainthm0}, \ref{gibbsthm0}, and \ref{easygibbs}.

It remains only to show that $\mu$ has the K-property, which we do via the same proof technique as Theorem~\ref{mainthm2}, i.e. showing that $\mu \times \mu$ is the unique MME on 
$X \times X$ by Theorem~\ref{mainthm0} and applying Proposition 1.4 from \cite{ledrappier}.

As in that proof, we define $G^{(2)}_n = G_n \times G_n$, $\mathcal{F}^{(2)} = \{(v, w) \ : \ v \in \mathcal{F}, |w| = |v|\} \cup \{(v, w) \ : \ w \in \mathcal{F}, |v| = |w|\}$, and $\mathcal{H}^{(2)}$ to be the set of heavy subwords of words in $\mathcal{F}^{(2)}$. Also as in that proof, all required hypotheses for Theorem~\ref{mainthm0} follow immediately except for the fact that $\nu(G^{(2)}_n)$ is bounded away from $0$ along a syndetic set for every MME $\nu$ on $X \times X$. Since $h(X \times X) = 2h(X) > 2\log(k(1-B))$ by (\ref{bdentbd}), we can take $\alpha' = 2\log(k(1-B))$ in Theorem~\ref{Gmeasbd}; the relevant infinite series is
\begin{multline*}
\sum_{n = 1}^{\infty} |\mathcal{H}^{(2)}_n| e^{-\alpha' n} \leq
\sum_{n = 4}^{\infty} 2|\mathcal{H}_n| |\mathcal{A}|^n \left(\frac{1}{k^2(1-B)^2}\right)^n \\ \leq
\sum_{n = 4}^{\infty} 2 \left(\frac{e(1+3kB)(k+1)}{k^2(1-B)^2}\right)^n <
2 \sum_{n = 4}^{\infty} (1/2)^n = \frac{1}{4}.
\end{multline*}
(The final inequality here uses (\ref{bfact}).) Since $1 - e^{-\alpha'} = 1 - \frac{1}{k^2(1-B)^2} > 1/4$, $G^{(2)}$ satisfies the conclusion of Theorem~\ref{Gmeasbd}. Now, Theorem~\ref{mainthm0} shows that $\mu \times \mu$ is the unique MME on $X \times X$, and so that $\mu$ has the K-property, completing the proof.

\end{proof}

The same result extends to $X^{\pm}_{k, h}$ with few changes.

\begin{theorem}\label{bddpmthm}
If $k > 9e$, $h(n) = nk$ for $n < 11$ and $h(n) > nk(1 - \frac{1}{9e})$ for all $n$, then $X^{\pm}_{k,h}$ satisfies the conclusions of Theorems~\ref{mainthm2} and \ref{gibbsthm} (with $G' = G^{\pm}_{k,h}$).
\end{theorem}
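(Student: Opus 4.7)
The plan is to mirror the proof of Theorem~\ref{bddthm} step by step, with all substitutions $\mathcal{A}_k \to \mathcal{A}^{\pm}_k$, $X_{k,h} \to X^{\pm}_{k,h}$, $\mathcal{F}_{k,h} \to \mathcal{F}^{\pm}_{k,h}$, $\mathcal{H}_{k,h} \to \mathcal{H}^{\pm}_{k,h}$, $G_{k,h} \to G^{\pm}_{k,h}$, and to check that the slightly different constants still satisfy all the inequalities needed to invoke Theorems~\ref{mainthm0}, \ref{gibbsthm0}, and \ref{easygibbs}. The K-property will again come from running the same argument on $X^{\pm}\times X^{\pm}$ and appealing to Proposition 1.4 of \cite{ledrappier}.

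First I would verify the three ingredients of that proof in the signed setting. Lemma~\ref{zeroout} applies verbatim to $X^{\pm}_{k,h}$ (padding a legal word with zeros never increases the absolute value of any window's sum), so any two- or three-fold concatenation of words in $G^{\pm}_{k,h}$ lies in $\mathcal{L}(X^{\pm}_{k,h})$, giving the specification hypotheses of Theorems~\ref{mainthm0} and \ref{gibbsthm0} with $R = 0$. For the entropy lower bound, the full shift on $\{-\lfloor k(1-B) \rfloor, \ldots, \lfloor k(1-B) \rfloor\}$ is contained in $X^{\pm}_{k,h}$, giving $h(X^{\pm}_{k,h}) > \log(2k(1-B))$, a $\log 2$ improvement over the unsigned bound. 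For the word counts, the same stars-and-bars argument as in (\ref{bddFbd}), applied separately to the two forbidden families $\{\sum w_i > h(n)\}$ and $\{\sum w_i < -h(n)\}$, yields $|\mathcal{F}^{\pm}_n| \leq 2(e(1+kB))^n$ for $n \geq 11$, and by the heavy-subword computation used in the proof of Theorem~\ref{bddthm}, $|\mathcal{H}^{\pm}_n| \leq 2(e(1+3kB))^n$ for $n \geq 4$, since any heavy$^{\pm}$ subword $u$ of length $m \geq n/3$ of a word $w \in \mathcal{F}^{\pm}_n$ satisfies $|\sum u_i| \geq h(n) - (n-m)k > mk(1-3B)$.

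With those in hand, I would take $\beta = \log\bigl(\frac{4k^2(1-B)^2}{2k+1}\bigr)$ in Theorem~\ref{Lcombbd} and $\alpha = \log(2k(1-B))$ in Theorem~\ref{Gmeasbd}, and then rerun the three geometric-series estimates from the proof of Theorem~\ref{bddthm}. The key algebraic observation is that both signed ratios
\[
\frac{e(1+3kB)(2k+1)}{4k^2(1-B)^2} \quad\text{and}\quad \frac{e(1+3kB)}{2k(1-B)}
\]
are each strictly less than the unsigned master ratio $\frac{e(1+3kB)(k+1)}{k^2(1-B)^2}$, which is bounded above by $1/2$ by (\ref{bfact}). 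The extra factor of $2$ coming from $|\mathcal{F}^{\pm}_n|$ and $|\mathcal{H}^{\pm}_n|$ is comfortably absorbed in the resulting slack, yielding uniqueness of the MME $\mu$ on $X^{\pm}_{k,h}$, the restricted lower Gibbs bound on $G^{\pm}_{k,h}$ from Theorem~\ref{gibbsthm0}, and the upper Gibbs bound from Theorem~\ref{easygibbs}.

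Finally, I would run the product-shift argument from the end of Theorem~\ref{bddthm} verbatim, with $\mathcal{H}^{(2)}$ and $G^{(2)}$ built from $X^{\pm} \times X^{\pm}$; taking $\alpha' = 2\log(2k(1-B))$ and again comparing to (\ref{bfact}) shows that $\nu(G^{(2)}_n)$ is bounded away from $0$ along a syndetic set for every ergodic MME $\nu$ on $X^{\pm} \times X^{\pm}$, so $\mu \times \mu$ is the unique MME, and Proposition 1.4 of \cite{ledrappier} yields the K-property for $\mu$. I do not expect any genuine obstacle here; the only delicate bookkeeping is the simultaneous tracking of the factor $2$ in $|\mathcal{F}^{\pm}_n|$, $|\mathcal{H}^{\pm}_n|$, and the alphabet size $2k+1$, but both are fully compensated by the improved entropy lower bound, so every inequality ends up with at least as much slack as in the unsigned case.
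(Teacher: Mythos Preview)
Your approach is essentially identical to the paper's: mirror Theorem~\ref{bddthm}, use Lemma~\ref{zeroout} for specification, bound $|\mathcal{F}^{\pm}_n|$ and $|\mathcal{H}^{\pm}_n|$ by twice the unsigned quantities, and rerun the series estimates with the larger alphabet and improved entropy bound. The one slip is your entropy lower bound: the full shift on $\{-\lfloor k(1-B)\rfloor,\ldots,\lfloor k(1-B)\rfloor\}$ has entropy $\log(2\lfloor k(1-B)\rfloor+1)$, and $2\lfloor k(1-B)\rfloor+1$ can be strictly less than $2k(1-B)$ (e.g.\ when the fractional part of $k(1-B)$ exceeds $1/2$), so the claimed bound $h(X^{\pm}_{k,h})>\log(2k(1-B))$ fails in general. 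The paper instead uses $2\lfloor k(1-B)\rfloor+1>2k(1-B)-1\geq(2k-2)(1-B)$ and takes $\alpha=\log((2k-2)(1-B))$, $\beta=\log\bigl(\tfrac{(2k-2)^2(1-B)^2}{2k+1}\bigr)$; with this correction your comparison-to-(\ref{bfact}) strategy still works (the paper records the needed ratio bound as a separate inequality (\ref{bfact2}), but your reduction to (\ref{bfact}) is a legitimate alternative).
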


\begin{proof}
Define $B = \frac{1}{9e}$. The reader may check that this implies that, for $k > 9e$,
\begin{equation}\label{bfact2}
\frac{e(1+3kB)(2k+1)}{(2k-2)^2(1-B)^2} < \frac{1}{2}.
\end{equation}

The rest of the proof is very similar to that of Theorem~\ref{bddthm}, and we will just summarize the changes. Firstly,
by Lemma~\ref{zeroout}, $G'$ has the specification properties in the hypotheses of Theorems~\ref{mainthm0} and \ref{gibbsthm0} (for $R = 0$)exactly as before. Secondly, the full shift on 
$\{-\lfloor k(1 - B) \rfloor, \ldots, 0, \ldots, \lfloor k(1 - B) \rfloor\}$ is contained in $X = X^{\pm}_{k, h}$ by definition, and therefore
\begin{equation}\label{bdentbd2}
h(X) \geq \log(1 + 2 \lfloor k(1-B) \rfloor) > \log(2k(1-B) - 1) \geq \log((2k-2)(1-B))
\end{equation}
(since $B < 1/2$). Our alphabet $\mathcal{A} = \mathcal{A}^{\pm}_{k, h}$ has size $2k + 1$, and our bounds for 
$\mathcal{F} = \mathcal{F}^{\pm}_{k, h}$ are correspondingly slightly larger. Specifically,
\begin{multline}\label{Fpmbd}
|\mathcal{F}_n| \leq \
|\{w \in \mathcal{A}^n \ : \ |w_1 + \ldots + w_n| > nk(1 - B)\}| \\ 
= 2|\{w \in \mathcal{A}^n \ : \ w_1 + \ldots + w_n > nk(1 - B)\}| = 
2|\{w \in [0, 2k]^n \ : \ w_1 + \ldots + w_n < nkB\}| \\
\leq 2 {n(1 + kB) \choose n} \leq 2(e(1 + kB))^n.
\end{multline}
(Here, the first equality is due to the obvious permutation on $\mathcal{A}^n$ given by flipping signs coordinatewise and 
the second equality is due to the bijection from $\mathcal{A}^n$ to $[0,2k]^n$ given by subtracting each letter from $k$.)

A nearly identical argument along the same lines from that of Theorem~\ref{bddthm} shows that for $\mathcal{H} = \mathcal{H}^{\pm}_{k,h}$,
\[
|\mathcal{H}_n| \leq 2(e(1 + 3kB))^n.
\]

So, all $|\mathcal{F}_n|$ and $|\mathcal{H}_n|$ have upper bounds on cardinality twice what was used in the proof of Theorem~\ref{bddthm}. The reader may check that if we use
$\beta = \log \left( \frac{(2k-2)^2(1-B)^2}{2k+1} \right)$ for Theorem~\ref{Lcombbd}, $\alpha = \log((2k-2)(1-B))$ for the use of Theorem~\ref{Gmeasbd} for $G = G^{\pm}_{k, h}$, and
$\alpha' = \log ((2k-2)^2(1-B)^2)$ for the use of Theorem~\ref{Gmeasbd} for $G^{(2)} = G \times G$, then all relevant infinite series end up with exponential term
less than $\frac{e(1+3kB)(2k+1)}{(2k-2)^2(1-B)^2}$. Therefore, by (\ref{bfact2}), they satisfy nearly the same final upper bounds, but are all doubled due to the
doubled upper bounds on cardinality for $|\mathcal{F}_n|$ and $|\mathcal{H}_n|$. However, all of the relevant inequalities in Theorem~\ref{bddthm} 
still hold when the infinite series are doubled, and so the argument used there will go through to prove the desired properties for $X^{\pm}_{k,h}$ as well.

\end{proof}

\begin{remark}
We presented Theorems~\ref{bddthm} and \ref{bddpmthm} both to give an example of a class of subshifts with exponentially growing
$|\mathcal{F}_n|$ which can be treated with our results, and also to demonstrate that if the forbidden list $\mathcal{F}$ is homogeneous in some sense (so that $\mathcal{H}_n$ is not too much larger than $\mathcal{F}_n$), then it can be possible to prove uniqueness of MME/Kolmogorov/Gibbs even when $\mathcal{F}_n$ is only bounded from above by $(|\mathcal{A}|/E)^n$ for some constant $E$ (as in (\ref{bddFbd}) and (\ref{Fpmbd})), rather than the $(|\mathcal{A}|/E)^{n/3}$ required for Theorem~\ref{mainthm2} or the
$(|\mathcal{A}|/E)^{n/4}$ required for Theorem~\ref{gibbsthm}.

Again, we have not attempted here to optimize relevant constants, and imagine that they could be improved at the expense of some more careful estimates.

\end{remark}

\bibliographystyle{plain}
\bibliography{nearsfts}

\end{document}